\documentclass[11pt,english,a4paper]{amsart}

\usepackage[english]{babel}
\usepackage[T1]{fontenc}

\usepackage{amsmath,amssymb,amsfonts,bm}
\usepackage{mathrsfs}

\usepackage{xcolor}
\usepackage{graphicx}

\usepackage{dsfont}
\usepackage{stmaryrd}
\usepackage[mathscr]{euscript}
\usepackage{cancel}
\usepackage{mathabx}

\usepackage{nicematrix}
\usepackage[all]{xy}
\entrymodifiers={+!!<0pt,\fontdimen22\textfont2>}

\usepackage{array}
\usepackage{float}
\usepackage{placeins}
\usepackage{verbatim}
\usepackage{enumerate}
\usepackage{caption}

\usepackage{pbsi}

\usepackage[
  pagebackref,            % renvoi biblio -> texte
  colorlinks=true,
  linkcolor=blue,
  citecolor=blue,
  urlcolor=red,
  hypertexnames=true
]{hyperref}

\newtheorem{thm}{Theorem}[section]
\newtheorem*{thm*}{Theorem}

\newtheorem{theorem}[thm]{Theorem}

\newtheorem{lem}[thm]{Lemma}
\newtheorem{lemma}[thm]{Lemma}

\newtheorem{prop}[thm]{Proposition}
\newtheorem{proposition}[thm]{Proposition}

\newtheorem{cor}[thm]{Corollary}

\newtheorem{defi}[thm]{Definition}

\newtheorem{problem}[thm]{Problem}

\newtheorem{ques}[thm]{Question}

\theoremstyle{remark}
\newtheorem{remark}[thm]{Remark}

\newcommand{\indic}{\mathbf{1}}

\newcommand{\T}{\mathbb{T}}
\newcommand{\D}{\mathbb{D}}

\newcommand{\id}{\mathrm{id}}

\newcommand{\A}{\mathcal{A}}

\newcommand{\I}{\mathcal{I}}

\newcommand{\N}{\mathbb{N}}

\renewcommand{\P}{\mathbb{P}}
\newcommand{\R}{\mathbb{R}}

\newcommand{\C}{\mathbb{C}}

\newcommand{\U}{\mathcal{U}}

\newcommand{\Z}{\mathbb{Z}}

\newcommand{\veps}{\varepsilon}

\newcommand{\ol}[1]{\overline{#1}}

\renewcommand{\Re}{\operatorname{Re}}

\newcommand{\la}{\langle}
\newcommand{\ra}{\rangle}

\newcommand{\limk}{\lim_{k\to\infty}}
\newcommand{\limn}{\lim_{n\to\infty}}

\newcommand{\Ttnull}{\mathcal{T}_{T_0}}
\newcommand{\Eins}{\mathbf{1}}

\newcommand{\Tlim}{\mathcal{L}_{T}}
\newcommand{\Tlimstr}{\mathcal{L}_{T}^{\text{str}}}
\newcommand{\Tnulllim}{\mathcal{L}_{T_0}}

\newcommand{\TT}{\mathcal{T}}
\newcommand{\const}{\text{const}}
\newcommand{\Pconst}{P_{\text{const}}}

\newcommand{\wot}{\texttt{WOT}}
\newcommand{\sott}{\texttt{SOT}\mbox{$_{*}$}}
\newcommand{\sot}{\texttt{SOT}}
\newcommand{\sote}{\texttt{SOT}\mbox{$^{*}$}}

\newcommand{\Cp}{\mathcal{C}_p} % positive contractions on L^p
\newcommand{\Ip}{\mathcal{I}_p} % positive isometries on L^p
\newcommand{\Up}{\mathcal{U}_p} % positive bijective isometries on L^p

\newcommand{\Contp}{\mathrm{Cont}_p}
\newcommand{\GG}{\mathcal{G}} % nonsingular transformations
\newcommand{\wTau}{\omega_\tau} %dérivée radon nikodym

\title{%Typical asymptotics %of operators and measure-preserving transformations}
Weak limit semigroup
in operator theory and ergodic theory}

\author{Tanja Eisner}
\address{Institute of Mathematics, University of Leipzig, P.O. Box 100 920, 04009 Leipzig, Germany}
\email{eisner@math.uni-leipzig.de}
\author{Valentin Gillet}
\address{Laboratoire Paul Painlev\'e, UMR 8524, Universit\'e de Lille, Cit\'e Scientifique, B\^atiment M2, 59655 Villeneuve d'Ascq Cedex (France) }
\email{valentin.gillet@univ-lille.fr}

\keywords{Operator asymptotics, weak operator topology, Baire category, Koopman operators, Hilbert space operators, contractions, positive operators on $L^p$-spaces}
\subjclass{37A05, 37A40, 47B02, 47B38, 47B65, 54A10, 54E52}

\begin{document}

%%%%%%%%%%%%%%%%%%%%%%%%%
%
%                       Transformations
%
%%%%%%%%%%%%%%%%%%%%%%%%%

\maketitle

\begin{abstract}
We study the \emph{weak limit semigroup} of an operator $T$, i.e., the set of all operators being weak limit points of the powers of $T$, in three different but related contexts: Koopman operators of measure-preserving transformations, contractions/isometries/unitaries on separable Hilbert spaces and positive operators on $L^p$-spaces. 
Hereby we focus on finding large subsets of the weak limit semigroup, in particular in the generic case. %TODO more? 
\end{abstract}
%\vspace{0.5cm}

\vspace{0.4cm}

\centerline{\Small \emph{The only thing we know about the future is that it will be different.}}
\centerline{\Small \textsc{Peter Drucker}}

\section{Introduction}

In this paper we study the following 
question. 
\begin{ques}
Given an operator $T$, 
which operators can appear as limits of a subsequence of powers of $T$ with respect to the weak operator topology? I.e., determine $V$ such that there exists a subsequence\footnote{By a subsequence of $\N$ we always mean a strictly monotone sequence in $\N$.} $(n_k)$ of $\N$ with weak-$\displaystyle \limk T^{n_k}=V$.
\end{ques}
We call such operators $V$ \emph{weak limit operators} for the operator $T$, and the set of all such operators, denoted by $\Tlim$, the \emph{weak limit semigroup}.

We will study the weak limit semigroup of $T$ in three contexts: $T$ being the Koopman operator on $L^2[0,1]$ of a measure-preserving transformation, a contraction/isometry/unitary on a separable Hilbert space $H$ and a positive operator on $L^p[0,1]$, $p\in[1,\infty)$. Note that for Koopman operators and isometries/unitaries, weak limit operators are in general not Koopman or isometries/unitaries anymore - for example, the orthogonal projection onto the constants or zero, respectively, is often such a weak limit operator. %Usually considers the \emph{strong limit semigroup} of $T$, i.e., the set of strong limit 

As the recent result\footnote{Note that Solecki only considered the limit operators $V$ which are again Koopman.}  
of Solecki \cite{Solecki23} from ergodic theory indicates, typically the weak limit semigroup has a complex structure which is not yet understood and it is difficult to determine all limit operators. We focus on finding large subsets of the weak limit semigroup, in particular in the generic case.

In ergodic theory, there are various results for the intersection of the weak limit semigroup with the Koopman operators of invertible transformations (being precisely the set of limit operators for the strong$^*$ operator topology\footnote{A net of operators $(T_\alpha)$ is said to converge to $T$ \emph{strongly$^*$} if $\displaystyle \lim_{\alpha}T_\alpha=T$ and $\displaystyle \lim_{\alpha}T^*_\alpha=T^*$ strongly.}). For example, Chacon and  Schwartzbauer \cite{ChaconSchwartzbauer69} showed\footnote{see Katok and Stepin \cite[Thm. 1.1]{KatokStepin67} for the fact that the transformations considered by Chacon, Schwartzbauer are generic} that for a generic transformation $T$, all invertible Koopman operators commuting with $T$ belong to $\Tlim$. (Note that all weak limit operators of course commute with $T$.) In his famous Weak Closure Theorem, generalizing a result of del Junco \cite{delJunco78}, King \cite{King86} showed\footnote{see Ryzhikov \cite{Ryzhikov07} for an alternative proof} that all rank one transformations have this property (even without invertibility of commuting Koopman operators) and added that for rank one transformations, the centralizer (in Koopman operators) is either trivial\footnote{i.e., equals $\{T^n:\, n\in \Z\}$} or uncountable. An example of a trivial centralizer (in not necessarily invertible Koopman operators) is the Chacon transformation \cite{Chacon}, see del Junco \cite{delJunco78}. For an abstract representation of the intersection of the weak limit semigroup of a generic transformation with (invertible) Koopman operators, see Solecki \cite{Solecki14}.

On the other hand, Halmos \cite{Halmos44}, \cite[pp. 77--80]{Halmos-book}  showed that the orthogonal projection onto the constants 
%which is not even an isometry, 
is generically in the weak limit semigroup. 
%\footnote{more precisely, a generic transformation is weakly mixing but not mixing, see also Rokhlin \cite{Rokhlin48}.}. 
Further operators which are not isometries and are typically in the weak limit semigroup were 
%discovered
introduced by Oseledets \cite{Oseledets69}, studied 
by Stepin \cite{Stepin86} and Katok and Stepin \cite{KatokStepin67}, see also Katok \cite[Chapter I]{Katok03}, and revisited by  Ryzhikov \cite{Ryzhikov07b} (see Theorems \ref{thm:stepin} and \ref{thm:generic-transf} below). 
%(Note that the notion of $(1-\lambda)$-mixing was in fact introduced by Oseledets \cite{Oseledets69}.) 
For some transformations, the weak closure of the powers of it was completely determined. This is the case for the %famous 
Chacon transformation $T$ for which the weak closure of the powers %contains 
is the union of 
the orthogonal projection onto constants and some explicit family of 
polynomials of $T$, see de la Rue, Janvresse, Prikhod'ko and Ryzhikov \cite{delaRue-Janvresse-Prikhodko-Ryzhikov15}. For more examples see Ryzhikov \cite{Ryzhikov11,Ryzhikov12}. 

The problem of describing the structure of the weak limit semigroup of unitary operators dates back to West \cite{West68} where, building on the work of De Leeuw and Glicksberg \cite{Deleeuw-Glicksberg61}, he studied the structure of weakly compact monothetic semigroups of operators in terms of the spectral properties of the generating elements. Brown and Moran \cite{Brown-Moran71} obtained more information concerning the pathologie of these types of semigroups using techniques from harmonic analysis. One of West's motivations was to decide whether it is possible to construct compact monothetic semigroups with infinitely many idempotents. Brown and Moran showed that this is indeed the case, by proving that the weak closure of the positive powers of a unitary operator can have its idempotent subsemigroup arbitrarily prescribed and can be finite, countably or even uncountably infinite, but there is no control over its structure. Using Riesz products on the unit circle, Brown and Moran \cite{Brown-Moran74} also constructed unitary operators with the property that the weak closure of their positive powers decomposes the space in a prescribed way without losing control of the full semigroup.  O'Brien \cite{OBrien25} studied 
the W$^*$-algebra generated by the weak limit semigroup 
%in his study of 
of a unitary operator to characterise 
weak stability.

We will also consider the case of positive operators on $L^p$-spaces being intermediate between Koopman operators and  general contractions. Note that the set of (Koopman operators of) invertible measure-preserving transformations is weakly closed and nowhere dense in the space of positive contractions of $L^2[0,1]$, see, e.g., Theorem \ref{theoremevaleursproprestypicalposcontrWOT}. Also positive contractions of $L^2[0,1]$ are clearly weakly closed and nowhere dense in the space of all contractions of $L^2[0,1]$, so we have a chain of spaces which are closed and nowhere dense in the next one.  

We will see that %the structure of 
the weak limit semigroup of a typical positive contraction of $L^2[0,1]$ has similar properties to those of a typical transformation  (more precisely, of its restriction to $L^2_0[0,1]$) and a typical Hilbert space operator. 

Several authors investigated the approximation of positive and bi-stochastic operators on $L^p[0,1]$ such as Brown \cite{Brown66}, Grząślewicz \cite{Gralewicz90}, Iwanik \cite{Iwanik80} and Kim \cite{Kim68}. Our study will rely on these works. It will also rely on the works done on generic properties of non-singular transformations on $[0,1]$ by Chacon and Friedman \cite{ChaconFriedman65}, Choksi and Kakutani \cite{ChoksiKakutani79}, Choksi and Nadkarni \cite{ChoksiNadkarni90} and Ionescu \cite{Ionescu65}, on the works of Katok and Stepin on transformations admitting a periodic approximation \cite[Part. 1]{Katok03}, \cite{KatokStepin67} and \cite{Stepin86}, and on the work of Ryzhikov on the structure of the weak closure of powers of a generic transformation \cite{Ryzhikov07b}.
%
%Several authors investigated the approximation of positive and bi-stochastic operators of $L^p[0,1]$ such as Brown \cite{Brown66}, Grząślewicz \cite{Gralewicz90}, Iwanik \cite{Iwanik80} and Kim \cite{Kim68}. This motivates our study of the weak limit semigroup of a typical positive contraction of $L^p[0,1]$. Since positive operators are more general than Koopman operators, it is natural to expect different phenomena compared to the structure of the weak limit semigroup of a generic transformation.}

This paper is aimed for different communities of mathematicians and thus partially uses different terminology. For example, in the ergodic theory context we call a property \emph{generic} if the set of  transformations satisfying this property forms a comeager set in all (invertible) transformations, while in the operator theoretic context we call analogous properties \emph{typical}. Note that we try to distinguish dense $G_\delta$ sets from general comeager sets when possible. 

The study of generic (invertible) transformations has long and rich history in ergodic theory going back to Halmos \cite{Halmos44} and Rokhlin \cite{Rokhlin48} who showed that a generic transformation is weakly mixing but not mixing. Since then many exciting results have been found, see, e.g., Rokhlin \cite{Rokhlin59}, Katok \cite{Katok85}, Stepin \cite{Stepin86}, Choksi and Nadkarni \cite{ChoksiNadkarni90}, del Junco and Lemańczyk \cite{delJuncoLemanczyk92}, Nadkarni \cite{Nadkarni-book},  King \cite{King00}, Ageev \cite{Ageev03}, de la Rue and de Sam Lazaro \cite{delaRue-deSamLazaro03}, Stepin and Eremenko \cite{StepinEremenko04}, Ryzhikov \cite{Ryzhikov24} in addition to the above references, cf.~also \cite{E25}.  

Typical behavior of more general operators than Koopman is less well studied. For typical properties of general unitaries/isometries/contractions on separable Hilbert spaces see  Eisner and Ser\'eny \cite{EisnerSereny08}, Eisner \cite{E10,E-book}, Eisner and Mátrai \cite{EisnerMatrai13}, Zorin-Kranich \cite{Pavel}, Eisner and Radl \cite{EisnerRadl22}, 
 Grivaux, Matheron and Menet \cite{GrivauxMatheronMenet21,GrivauxMatheronMenet21b,GrivauxMatheronMenet22}, Grivaux and L\'opez-Mart\'inez \cite{GrivauxLopez-Martinez23}. Typical positive operators were studied in Bartoszek and Kuna \cite{BartoszekKuna05}, Gillet \cite{gillet1,gillet2}.

\subsection*{Organization of the paper}
After discussing notations and general properties of the weak limit semigroup in Section \ref{sec:prelim}, we show in Section \ref{sec:rig} that the only fixed (i.e., not depending on $T$) operators in the weak limit semigroup of a generic transformation $T$ are of the form $\lambda I+(1-\lambda)\Pconst$, $\Pconst$ being the orthogonal projection onto the constants, for $\lambda\in[0,1]$. These operators are known to be in the weak limit semigroup for a generic $T$ by a result of Stepin, see Theorem \ref{thm:stepin}. Analogously, we show that the only fixed operators in the weak limit semigroup of a typical unitary/isometry/contraction are of the form $\lambda I$, $\lambda\in\ol{\D}$, $\D$ being the open unit disc. Moreover, such operators are indeed in the weak limit semigroup of a typical unitary/isometry/contraction. For positive $L^p$-operators, we also show that the only fixed positive operators in the weak limit semigroup of a typical positive contraction/positive isometry/\allowbreak positive invertible isometry of $L^p[0,1]$ are of the form $\lambda I$, $\lambda \in [0,1]$. 

In Section \ref{sec:ex-op}, we present a class of examples of a unitary operator $T$ with maximal weak limit semigroup, i.e., being the set of all contractions which commute with $T$. The construction uses continuous measures on Kronecker sets. We then use this example in Section \ref{sec:ex-transf} to determine the weak limit semigroup for Gaussian processes with such spectral measures and see that among Gaussian processes, the weak limit semigroup of such processes is maximal.  

Building on the properties of the weak limit semigroup of a generic transformation that we outline in Section \ref{sec:gen-transf} (basically done by Stepin, Katok and Ryzhikov), we study the weak limit semigroup of a typical Hilbert space contraction/isometry/unitary $T$ in Section \ref{sec:genericasymptoticsforoperators}. We begin the section by refining the result from \cite{E10} by showing that unitaries form a dense $G_\delta$ set in isometries and contractions and isometries  form a dense $G_\delta$ set in contractions. Here, we endow unitary operators with the strong$^*$, isometries with the strong and contractions with the weak operator topology. We further show that invertible operators $T$ with the weak limit semigroup containing all operators of the form 
$$\displaystyle \sum_{n=-\infty}^\infty a_nT^n \text{ with } (a_n)_{n \in \Z} \subseteq \C \text{ satisfying }\displaystyle\sum_{n=-\infty}^\infty |a_n|\leq 1
$$
form a dense $G_\delta$ (resp., comeager) set in the space of unitary and isometric operators (resp., contractions).

% by showing that it contains many functions of $T$. 

Section \ref{Section-genericposoperators} is devoted to the study of the spectral properties of a typical positive contraction of $L^p[0,1]$ with applications to the study of the weak limit semigroup of such a typical positive contraction. 
Building on the work of Grząślewicz, we deduce that the set of positive invertible isometries is dense $G_\delta$ in the set of positive contractions of $L^p[0,1]$ for $1 < p < \infty$ with respect to the weak operator topology. Using the works on generic properties of non-singular transformations on $[0,1]$, the works of Katok and Stepin and the work of Ryzhikov, we prove that positive invertible operators $T$ with the weak limit semigroup containing all operators of the form 
$$\displaystyle \sum_{n=-\infty}^\infty a_nT^n \text{ with } (a_n)_{n \in \Z} \subseteq [0,1] \text{ satisfying }\displaystyle\sum_{n=-\infty}^\infty a_n\leq 1
$$
form a dense $G_\delta$ (resp., comeager) set in the space of all positive isometries and positive invertible isometries of $L^p[0,1]$ with respect to the strong operator topology (resp., positive contractions with respect to the weak operator topology). We also show that the weak limit semigroup of a typical positive contraction of $L^p[0,1]$ (with respect to the weak operator topology) does not contain any rank-one positive operator, contrary to the weak limit semigroup of a generic transformation. Building on the work of Iwanik, we prove that positive isometries of $L^1[0,1]$ with the weak limit semigroup containing all operators of the form $$\displaystyle \sum_{n = 0}^{N} a_n T^n  \text{ with } N \geq 0 \; \textrm{and} \; (a_n)_{0 \leq n \leq N} \subseteq [0,1] \text{ satisfying } \displaystyle \sum_{n = 0}^{N} a_n = 1 $$ is dense with respect to the strong operator topology.

We end the article with some open problems related to our study.

\medskip

\textbf{Acknowledgements.} We are deeply grateful to Sophie Grivaux for the key example from Proposition \ref{prop:ex-Sophie}. The first author thanks Sohail Farhangi for inspiring discussions on $\alpha$-mixing which led to the idea to write this paper. 

The second author acknowledges the support of the CDP C2EMPI, as well as the French State under the France-2030 programme, the University of Lille, the Initiative of Excellence of the University of Lille, the European Metropolis of Lille for their funding and support of the R-CDP-24-004-C2EMPI project.

\section{Notation and Preliminaries}\label{sec:prelim}

\subsection{Measure-preserving transformations}
 
We denote by $\mathcal{T}$ the set of all invertible measure-preserving transformations on $[0,1]$ with respect to the Lebesgue measure,\footnote{One can equivalently take any  standard non-atomic probability space instead of [0,1] endowed with the Lebesgue measure, see, e.g., \cite[Remark 8.48(d)]{EF-book}.} endowed with the strong$^*$ (or, equivalently, weak or strong) operator topology for the corresponding Koopman operators making it a Polish space (i.e., completely metrizable and separable) by Halmos \cite[pp.~62--64]{Halmos-book}. 
Here for an (invertible measure-preserving) transformation $T$ on $[0,1]$, the corresponding \emph{Koopman operator}\footnote{Following the tradition in ergodic theory, we denote the Koopman operator by the same letter as the underlying transformation. %Note that the Koopman operator can be defined in the same way on $L^p[0,1]$ for every $p\in[1,\infty]$.
}  $T:L^2[0,1]\to L^2[0,1]$ is the unitary operator defined by
$$
(Tf)(x):=f(Tx).
$$ 
 Moreover, we denote by $T_0$ the restriction of a Koopman operator $T$ 
%on $L^2[0,1]$ 
to the closed $T$-invariant subspace 
$$
L_0^2[0,1]:=\left\{f\in L^2[0,1]:\ \int_{[0,1]}f\, dm=0\right\}.
$$ 
The restriction of $\mathcal{T}$ to $L_0^2[0,1]$ will be denoted by $\mathcal{T}_0$.

For $T\in \mathcal{T}$, we are interested in the \emph{weak limit semigroup} $\Tlim$ of $T$ defined by 
\begin{eqnarray*}
\Tlim:=\{S \text{ on } L^2[0,1]:\, \text{weak-}\lim_{k\to\infty}T^{n_k}=S \text{ for some subsequence } (n_k) \text{ of }\N \}.
%(n_k)\subset \N \text{ with }\limk n_k=\infty
%n_k\to\infty
\end{eqnarray*}
%subsequence $(n_k)$ of $\N$}\},

%where the convergence is again considered with respect to the weak operator topology. 
%\begin{rem}
Note that elements of $\Tlim$ do not necessarily belong to $\mathcal{T}$, e.g., for weakly mixing transformations this set contains the orthogonal projection onto the constants which we denote by $\Pconst$. %More generally, by the Jacobs--Glicksberg--de Leeuw decomposition one has elements in  $\Tlim$  which are not in $\mathcal{T}$ (and have non-trivial kernel)  as soon as the Kronecker factor is not the whole $L^2[0,1]$.  %, see, e.g.,  ............. below. %of Koopman operators 
%and $\mathcal{T}_0$ is the set of all operators in $\mathcal{T}$ restricted to $L^2[0,1]$.
% as the set of all $S\in \mathcal{T}$ with $\lim_{k\to\infty}T^{n_k}=S$ in the weak (or, equivalently, strong) operator topology for some subsequence $(n_k)$ of $\N$.
%
%\end{rem}
%We also call the \emph{strong$^*$ limit group} 

Since every operator $S\in\Tlim$ satisfies $S\Eins=\Eins$ and leaves $L_0^2[0,1]$ invariant, the question restricts to understanding the set
$$
\Tnulllim=\{S\text{ on } L_0^2[0,1]:\, \text{weak-}\lim_{k\to\infty}T^{n_k}_0=S \text{ for some subsequence $(n_k)$ of $\N$}\}.
$$

A property of elements of $\mathcal{T}$ is called \emph{generic} (or \emph{typical}) if the set of transformations satisfying this property is comeager in $\mathcal{T}$, i.e, contains a dense $G_\delta$ subset of $\mathcal{T}$. %An important result in the study of generic properties of invertible measure-preserving transformations is Halmos' Conjugacy Lemma, stating that if $\tau \in \mathcal{T}$ is aperiodic, then the conjugacy class $\{ \sigma \tau \sigma^{-1} : \sigma \in \mathcal{T} \} $ is dense in $\mathcal{T}$. 

%We call a flow $(T_t)$ of measure-preserving transformations on a probability space $(X,\mu)$ \emph{continuous} if it has strongly continuous Koopman semigroup, i.e., if for every $f\in L^2[0,1]$ the map $t\mapsto T_tf$, $\R_+\to L^2[0,1]$ is continuous\footnote{In this case,  the Koopman semigroup is strongly continuous in $L^p[0,1]$ for every $p>1$.}.We say that a flow $(T_t)$ \emph{embeds} a transformation $T$ if $T=T_1$. 
%%We furthermore say that two flows $(S_s)$ and $(T_t)$ \emph{intersect} if $S_s=T_t$ holds for some $s,t\in(0,\infty)$. In this case we say that they intersect at time $t$ for the flow $(T_t)$.

%.....TODO: mention polynomials from Ryzhikov etal???.....

\subsection{General Hilbert space operators} \label{introsubsecoperators} Let $H$ be a separable Hilbert space. We denote by $\mathcal{C}$, $\I$ and $\U$ the sets of contractions, isometries and unitary operators on $H$, respectively. We denote by $\wot$, $\sot$ and $\sote$ the weak, the strong and the strong$^*$ operator topology on $\mathcal{C}$. Let us recall that $\wot$ is generated by the semi-norms $\lVert T \rVert_{x,y} = \lvert \la Tx,y \ra \rvert$ for $x, y \in H$, while $\sot$ is generated by the semi-norms $\lVert T \rVert_x = \lVert Tx \rVert$ for $x \in H$. The $\sote$ topology is induced by the semi-norms of the form $\lVert T \rVert_x = \lVert Tx \rVert$ and $\lVert T \rVert_{x,*} = \lVert T^*x \rVert$ for $x \in H$. %The $\sote$ topology corresponds to the $\sot$ topology on the operators and their adjoints. 
Since $H$ is reflexive and separable, all these operator topologies are Polish on $\mathcal{C}$. 
%(i.e., completely metrizable and separable). 
A metric inducing the $\wot$ topology is given, for example, by 
$$
d(T,S):=\sum_{j,l=1}^\infty\frac{|\la (T-S)x_j,x_l\ra|}{2^{j+l}\|x_j\|\|x_l\|}
$$
for a fixed sequence $(x_j)_{j \geq 1}$ which is dense in $H\setminus\{0\}$. 
Similarly, metrics compatible with the $\sot$ and $\sote$ topologies can be defined in the same manner (see also \cite[Proposition 2.2]{EisnerMatrai13} and \cite{EisnerSereny08}).
It is not difficult to see that the topologies $\wot$ and $\sot$ coincide on $\I$ and that the topologies $\wot$, $\sot$ and $\sote$ coincide on $\U$. Moreover, for each $n \geq 1$, the map $T \mapsto T^n$ is $\sot$-continuous on the space $\mathcal{C}$.

In the same way as for transformations, a property of elements of $\mathcal{C}$, $\I$ or $\U$ is said to be \emph{typical} for a given topology on these spaces if the set of operators satisfying this property is comeager in the corresponding space. In this article, we study the weak limit semigroup of a typical Hilbert space operator in the spaces $(\mathcal{C}, \wot)$, $(\I, \sot)$ and $(\U, \sote)$. In particular, we investigate which functions of a typical contraction $T$ of $H$ belong to the weak limit semigroup of $T$. This part of our study will mainly rely on the properties of the weak limit semigroup of a generic transformation in $\mathcal{T}$ as we will see.

\subsection{Positive operators on $L^p[0,1]$}
We also study the weak limit semigroup of a typical positive contraction of $L^p[0,1]$ for $1 \leq p < \infty$. An operator $T$ on $L^p[0,1]$ is said to be \emph{positive} if $Tf \geq 0$ for every function $f \geq 0$ in $L^p[0,1]$. We respectively denote by $\Cp$, $\I_p$ and $\U_p$ the sets of positive contractions, positive isometries and positive invertible isometries of $L^p[0,1]$ for $1 \leq p < \infty$. Examples of positive operators are, of course, Koopman operators on $L^p[0,1]$ associated to transformations. The description of the sets $\I_p$ and $\U_p$ will be recalled in Section \ref{Section-genericposoperators}. 

Analogously, the operator topology $\sot$ can be defined on a separable Banach space and the topologies $\wot$ and $\sote$ can be defined on a reflexive separable Banach space in the same way as in the Hilbert space setting, defining Polish topologies on these spaces. It is not hard to see that the set $\Cp$ is closed in the set of contractions of $L^p[0,1]$ for $\sot$ when $p \geq 1$, as well as for $\wot$ and $\sote$ when $1 < p < \infty$ (see also the proof of Lemma \ref{lemmeadjointposetptscont}). In particular, the spaces $(\Cp, \wot)$, $(\I_p, \sot)$ and $(\U_p, \sot)$\footnote{We use $\sot$ and not $\sot^*$ for positive invertible isometries for convenience to stay in the same $L^p$-space.} are Polish for $1 < p < \infty$, as well as $(\I_1, \sot)$ and $(\U_1, \sot)$. Finally, since $L^p[0,1] $ is uniformly convex for $1 < p < \infty$, the topologies $\wot$ and $\sot$ coincide on $\I_p$ and the topologies $\wot$, $\sot$ and $\sote$ coincide on $\U_p$ (see the paragraph above Theorem \ref{thmtypicalWOTbijisometries}).

\subsection{First properties of the weak limit semigroup}

%We begin with easy observations, 
The following observations on the weak limit semigroup of an operator are easy,
where (d) is due to Ryzhikov \cite[Thm.~6.1]{Ryzhikov07b} and (h) was observed by King \cite[(0.2)]{King86} in the
context of measure-preserving transformations. By considering unitary or normal operators, we automatically assume the underlying space to be Hilbert. Note that we consider  Banach spaces with separable dual to make sure that the weak operator topology is metrizable on bounded subsets of operators. 

\begin{prop}[Properties of the weak limit semigroup]\label{rem:prop-of-limits}
For a contraction $T$ on a Banach space with separable dual,
%Hilbert space,
%unitary operator. 
the following assertions hold.
%For $T\in\mathcal{T}$ denote by $S_l$ the set of operators $V$ on $L^2_0(X,\mu)$ such that there exists a subsequence $(n_k)$ of $\N$ such that  $\lim_{k\to\infty} T_0^{n_k}=V$ weakly. Then the following assertions hold.
\begin{itemize}
\item[(a)] \label{PropPrelim(a)} $\Tlim$ is closed with respect to the weak operator topology, is 
commutative, commutes with $T$ and is closed under multiplication by $T$. 
%(by shifting the corresponding  sequence $(n_k)$ to the right)
If $T$ is invertible then $\Tlim$ is also closed under multiplication by $T^{-1}$.  %Moreover, if $X'$ is separable then $\Tlim$ is closed with respect to the weak operator topology.}%$=T^*$. % (by shifting the corresponding sequence $(n_k)$ to the left). 
\item[(b)] \label{PropPrelim(b)} $\Tlim$ is closed under multiplication. 
%(Indeed, assume that $\lim_{k\to\infty} T^{n_k}=V$ and $\lim_{k\to\infty} T^{m_k}=W$ weakly. Then by (a) $T^{m_k}V\in \Tlim$ for each $k$ and hence $WV\in \Tlim$.)
%
\item[(c)] \label{PropPrelim(c)} Assume that\footnote{Operators satisfying $I\in\Tlim$ are called \emph{rigid}, see Definition \ref{def:rigid-op} below.} $I\in \Tlim$ and that $T$ is unitary. Then $\Tlim^*=\Tlim$. 
%(Indeed, in this case by (a) we have $T^*\in \Tlim$  and hence $V^*\in \Tlim$ for every $V\in \Tlim$.) 
\item[(d)] \label{PropPrelim(d)} % As observed by Ryzhikov \cite[Thm.~6.1]{Ryzhikov07b}, 
The weak limit semigroup $\Tlim$ is convex as soon as  the operators $\frac12(I+T^n)$ belong to $\Tlim$ for every $n\in\N$ large enough. 
%Indeed, assume that $T^{n_k}\to S_1$ and $T^{m_k}\to S_2$ weakly and assume without loss of generality $n_k\geq m_k$ for each $k$. Then by (a) all operators  $\frac12(T^{n_k}+T^{m_k})=T^{m_k}\cdot\frac12(I + T^{n_k-m_k})$ belong to the weak limit semigroup $\Tlim$ and so does their weak limit as $k\to\infty$ being $\frac12(S_1+S_2)$. 
\item[(e)] \label{PropPrelim(e)} If %$T$ is a contraction and  
there exists a sequence $(n_k)\subset \N$ such that $\displaystyle \limk T^{n_k}=I$ weakly %(and hence strongly), 
then 
\begin{equation}\label{eq:wlimsgr=wsgr}
\Tlim%=\ol{\{T^n:\, n\in\Z\}}^{\text{weak}} 
=\ol{\{T^n:\, n\in\N_{0}\}}^{\emph{\wot}}.
\end{equation}
%Indeed, since \eqref{eq:wlimsgr=wsgr} is clearly true for periodic operators, we can assume without loss of generality that $\limk n_k=\infty$, i.e., $I\in \Tlim$. Thus \eqref{eq:wlimsgr=wsgr} follows from (a) and (b).  
If in addition $T$ is invertible, then one also has 
\begin{equation}\label{eq:wlimsgr=wsgr-Z}
\Tlim=\ol{\{T^n:\, n\in\Z\}}^{\emph{\wot}}. 
%=\ol{\{T^n:\, n\in\N_{0}\}}^{\text{weak}}.
\end{equation}
%This is a direct consequence of \eqref{eq:wlimsgr=wsgr} and (c). 
Moreover, for unitary operators \eqref{eq:wlimsgr=wsgr-Z} holds whenever $\displaystyle \limk T^{n_k}=I$ weakly for some sequence $(n_k)\subset \Z$. % since weak convergence to the identity implies weak convergence of the adjoints to the identity.
\item[(f)] \label{PropPrelim(f)} If %I\in \Tlim$ and 
$T$ is a contraction on a Hilbert space and $\Tlim$ contains an injective operator (for example, if $I\in\Tlim$), then $T$ is unitary. %Moreover, 
%This follows from the Foguel decomposition of Hilbert space contractions into a unitary and a weakly stable part, see, e.g., \cite[Thm.~2.3.1]{E-book}. Moreover, both subspaces are reducing for  every $S\in\Tlim$ and the restriction of every $S\in\Tlim$ to the weakly stable subspace of $T$ is zero, thus it suffices to determine $\Tlim$ on the unitary subspace of $T$.
%the question of understanding $\Tlim$ restricts to understanding the restriction of $\Tlim$ to the unitary subspace of $T$. %The same holds if one replaces $I$ by any injective operator.
\item[(g)] \label{PropPrelim(g)} If $T$ is normal, then every $S\in\Tlim$ is normal. %(Indeed, if $\limk T^{n_k}=S$ weakly, then $T^{n_k}T^{*m}=T^{*m}T^{n_k}$ converges weakly as $k\to\infty$ to $ST^{*m}=T^{*m}S$ for every $m\in\N$. For $m\in\{n_k\}$ taking weak limit as $k\to\infty$ implies $SS^*=S^*S$.) 
In particular, if such $S$ is an isometry or a co-isometry, then $S$ is automatically unitary\footnote{cf.~King \cite[p.~363]{King86}} (and $T$ is also unitary %if contraction 
by (f)).
\item[(h)] \label{PropPrelim(h)} If %\footnote{\color{blue}see King \cite[(0.2)]{King86} 
$T$ is unitary and $\Tlim$ contains a (co-)isometric($=$unitary) operator, then $I\in\Tlim$. %(This follows from (1) and the calculation in (g).)
\end{itemize}
\end{prop}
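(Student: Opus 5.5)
The plan is to go through (a)--(h) in order, using two standing tools. First, since the dual is separable, the weak operator topology is metrizable on the closed unit ball of operators. Second, multiplication is separately \wot-continuous: if $T^{n_k}\to S$ weakly, then $AT^{n_k}\to AS$ and $T^{n_k}A\to SA$ weakly for every bounded $A$.

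For (a), closedness follows from a diagonal argument in this metric. Given $S_j\in\Tlim$ with $S_j\to S$, pick $n^{(j)}$ increasing with $d(T^{n^{(j)}},S_j)<1/j$; then $T^{n^{(j)}}\to S$. Commutation with $T$ comes from $T^{n_k}T=TT^{n_k}$ and separate continuity. Next, $TS=\lim T^{n_k+1}$ and $T^{-1}S=\lim T^{n_k-1}$, and the shifted sequences are still eventually strictly increasing. Commutativity of $\Tlim$ will be deduced after (b): each $V\in\Tlim$ commutes with every power $T^{m}$, so passing to the limit along $m_k$ gives $VW=WV$.

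For (b), take $T^{n_k}\to V$ and $T^{m_k}\to W$. By (a) the operators $T^{m_k}V$ lie in $\Tlim$, and they converge weakly to $WV$. Closedness then gives $WV\in\Tlim$.

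For (d), assume without loss of generality that $n_k\ge m_k$ and that $n_k-m_k$ is large. Then
\[
\tfrac12\bigl(T^{n_k}+T^{m_k}\bigr)=T^{m_k}\cdot\tfrac12\bigl(I+T^{n_k-m_k}\bigr)\in\Tlim
\]
by (a), and the weak limit $\tfrac12(S_1+S_2)$ lies in $\Tlim$. The step that needs care is when $n_k-m_k$ stays bounded. Pass to a subsequence on which $n_k-m_k=r$ is constant. For $r$ large this is covered by the hypothesis, since $\tfrac12(I+T^r)\in\Tlim$. Otherwise replace $r$ by $r+N$ through a shift argument, or use $S_1=T^rS_2$ directly and write $\tfrac12(S_1+S_2)=S_2\cdot\tfrac12(I+T^r)$. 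Convexity in the dyadic sense plus closedness then yields full convexity.

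For (e), first treat the case where $(n_k)$ is bounded along a subsequence. Then $T^n=I$ for some $n\ge1$, $T$ is periodic, and everything is immediate. Otherwise $I\in\Tlim$. By (a), every $T^n=T^nI$ lies in $\Tlim$, so the closure of $\{T^n\}$ is contained in $\Tlim$ by closedness; the reverse inclusion is trivial. For invertible $T$, (a) gives $T^{-n}\in\Tlim$. In the unitary case with $n_k\to-\infty$, take adjoints: weak convergence $T^{n_k}\to I$ implies $T^{*n_k}=T^{-n_k}\to I$.

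For (f), I will use the Foguel decomposition $H=H_u\oplus H_{ws}$ into a unitary part and a weakly stable part. Every $S\in\Tlim$ vanishes on $H_{ws}$, so injectivity forces $H_{ws}=0$.

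For (g), take limits in $T^{n_k}T^{*m}=T^{*m}T^{n_k}$ to get $ST^{*m}=T^{*m}S$. Then let $m=n_j\to\infty$; this gives $SS^*=S^*S$, because adjoints converge weakly as well. A normal isometry is unitary.

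For (h), let $S=\lim T^{n_k}$ be unitary. Then $S^*T^{n_k}\to S^*S=I$, and $S^*$ is the weak limit of $T^{-n_k}$. The main obstacle is producing a positive-index sequence converging to $I$. I would use that $\Tlim\ni S$ implies $S^j\in\Tlim$ by (b), and that the weak closure of the powers of a unitary is a compact semitopological semigroup, so its minimal idempotent is $I$ on the unitary part. Concretely, for each finite test set, choose $n_{k'}\gg n_k$ so that $T^{n_{k'}-n_k}$ is close to $S\cdot S^*=I$ in the metric, using norm-preservation to upgrade weak closeness to strong closeness. This gives $I\in\Tlim$.
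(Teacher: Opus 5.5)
Your arguments for (a), (b), (e), (f) and (g) are the paper's. You add useful detail: the diagonal argument in the metric $d$ for closedness, and the observation in (d) that midpoint convexity plus closedness gives full convexity.

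\textbf{The gap: (c) is never proved.} It is short. Since $I\in\Tlim$ and $T$ is invertible, (a) gives $T^{-n}=T^{-n}I\in\Tlim$ for all $n$. If $V=\lim_k T^{n_k}$ weakly, then $V^*=\lim_k (T^*)^{n_k}=\lim_k T^{-n_k}$ weakly, because taking adjoints is continuous for the weak operator topology on a Hilbert space. So $V^*\in\Tlim$ by closedness. Hence $\Tlim^*\subseteq\Tlim$, and applying ${}^*$ once more gives equality.

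Two detours should also be cleaned up.

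In (d), the ``bounded difference'' case never arises. Since $n_k\to\infty$, you can always pass to a subsequence of $(n_k)$ with $n_k\ge m_k+k_0$, and it still converges to $S_1$; this is exactly the paper's reduction. Your alternative $\tfrac12(S_1+S_2)=S_2\cdot\tfrac12(I+T^r)$ proves nothing for $r<k_0$, because nothing puts $\tfrac12(I+T^r)$ in $\Tlim$.

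In (h), there is no obstacle. By (a), $T^{-n_k}S\in\Tlim$, and $T^{-n_k}S=(T^*)^{n_k}S\to S^*S=I$ weakly, so $I\in\Tlim$ by closedness. The paper does precisely this. It obtains $S^*S,SS^*\in\Tlim$ for every $S\in\Tlim$, which covers isometric and co-isometric $S$ at once without using (g). Your concrete choice of $T^{n_{k'}-n_k}$ is just a diagonal unpacking of this, and needs no upgrade to strong convergence. Drop the semigroup remark. The minimal idempotent of the weak closure of the powers of a unitary is not $I$; it is the orthogonal projection onto the closed span of its eigenvectors, e.g.\ $0$ if $T$ has no eigenvalues.
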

\begin{proof}
(a) Closedness of $\Tlim$ under multiplication with $T$ follows by shifting the corresponding  sequence $(n_k)$ to the right, while, for invertible operators, closedness of $\Tlim$ under multiplication with $T^{-1}$ follows by shifting the corresponding  sequence $(n_k)$ to the left. The rest of the assertion is clear. 

(b) Assume that $\displaystyle \lim_{k\to\infty} T^{n_k}=V$ and $ \displaystyle \lim_{k\to\infty} T^{m_k}=W$ weakly. Then by (a) 
$T^{m_k}V\in \Tlim$ for each $k$ and hence $WV\in \Tlim$.

(c) By (a) we have $T^*\in \Tlim$  and hence $V^*\in \Tlim$ for every $V\in \Tlim$.

(d) 
Let $k_0$ be such that $\frac12(I+T^k)\in\Tlim$ for every $k\geq k_0$. Assume that $T^{n_k}\to S_1$ and $T^{m_k}\to S_2$ weakly. Passing to a subsequence of $(n_k)$ if necessary we can 
 assume without loss of generality that $n_k\geq m_k+k_0$ holds for each $k$. Then by (a) all operators  $\frac12(T^{n_k}+T^{m_k})=T^{m_k}\cdot\frac12(I + T^{n_k-m_k})$ belong to the weak limit semigroup $\Tlim$ and so does their weak limit as $k\to\infty$ being $\frac12(S_1+S_2)$. 

(e) Since \eqref{eq:wlimsgr=wsgr}  clearly holds for periodic operators, we can assume without loss of generality that $\displaystyle \limk n_k=\infty$, i.e., $I\in \Tlim$. Thus both \eqref{eq:wlimsgr=wsgr} and \eqref{eq:wlimsgr=wsgr-Z} follow from (a). The last assertion holds by applying the previous argument to $T^{-1}$ if necessary.
%Since \eqref{eq:wlimsgr=wsgr} is clearly true for periodic operators, we can assume without loss of generality that $\limk n_k=\infty$, i.e., $I\in \Tlim$. Thus \eqref{eq:wlimsgr=wsgr} follows from (a) and (b). Moreover, \eqref{eq:wlimsgr=wsgr-Z} is a direct consequence of \eqref{eq:wlimsgr=wsgr} and (c), whereas the last assertion holds since weak convergence to the identity implies weak convergence of the adjoints to the identity.

(f) This point follows from the Foguel decomposition of the Hilbert space 
%contraction $T$ 
into two invariant subspaces, where $T$ is weakly stable (i.e., $\displaystyle \limn T^n=0$ weakly) on the first subspace and $T$ is unitary on the second subspace,
%a unitary and a weakly stable part and a unitary part, 
see, e.g., \cite[Thm.~II.3.9]{E-book}.

(g) Assume that $T$ is normal. If $\displaystyle \limk T^{n_k}=S$ weakly, then $T^{n_k}T^{*m}=T^{*m}T^{n_k}$ converges weakly as $k\to\infty$ to $ST^{*m}=T^{*m}S$ for every $m\in\N$. For $m\in\{n_k\}$ taking weak limit as $k\to\infty$ implies $SS^*=S^*S$, i.e., $S$ is normal.

(h) Let $S\in \Tlim$. %Then both $S$ and $T$ are unitary by (f) and (g). 
Since by (a) $\Tlim$ is closed under the multiplication by $T^*$, we have $(T^*)^nS,S(T^*)^n\in \Tlim$ for every $n\in\N$, implying that $S^*S, SS^*\in \Tlim$. Thus if $S$ is (co-) isometric (and hence unitary by (g)), $I\in \Tlim$ follows.
\end{proof}

\begin{remark}
Let $T$ be a contraction on a Hilbert space with the corresponding  decomposition into a unitary and a weakly stable part as in the proof of (f). Then  
    both subspaces are $S$-invariant for every $S\in\Tlim$ and the restriction of $S$ to the weakly stable subspace of $T$ is zero.  Thus it suffices to determine $\Tlim$ on the unitary subspace of $T$.
\end{remark}

\begin{remark}\label{rem:isom-strong-limit-sgr}
Let $T$ be an isometry on a Hilbert space. Then an operator of the form $\frac12(I+T^m)$, $m\in\N$, used in Proposition \ref{rem:prop-of-limits}(d) belongs to the \emph{strong limit semigroup} 
$$
\Tlimstr:=\{S \text{ on } H:\, \lim_{k\to\infty}T^{n_k}=S \text{ strongly for some subseq. } (n_k) \text{ of }\N \}
$$
of $T$ if and only if $T^m=I$. Indeed, such operators are isometries (which is a necessary condition to belong to $\Tlimstr$ for an isometry $T$) only if $T^m=I$ as the following argument shows. Assume that $\frac12(I+S)$ is an isometry for some isometry $S$ and let $f\in H\setminus\{0\}$. By assumption we have $\|f+Sf\|^2=4\|f\|^2$ or, equivalently, 
\begin{equation}\label{eq:CS-equality}
\Re \la Sf,f\ra=\|f\|^2.
\end{equation}
Equality in the Cauchy-Schwarz inequality leads to $Sf=\lambda f$ for some $\lambda\in \T$ (recall that $S$ is an isometry). This together with \eqref{eq:CS-equality} implies $\lambda=1$, so $S=I$.
\end{remark}

\section{$\lambda$-Rigidity}\label{sec:rig}

\subsection{Fixed asymptotics and $\lambda$-rigidity: transformations}
%in the weak closure 
%$\lambda$-rigidity for transformations}

We now show that, in order to study $\Ttnull$, one has to allow the operators inside to depend on $T$. We first need an easy classical fact about commuting operators.

\begin{lem}\label{lem:comm}
Let $T$ and $S$ be commuting linear operators on a Banach space $H$ and let $f\in H$ satisfy $Tf=\lambda f$ for some $\lambda\in\C$. Then $T(Sf)=\lambda Sf$. 
\end{lem}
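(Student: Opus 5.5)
The statement is a one-line consequence of commutativity, so the plan is simply to compute $T(Sf)$ by moving $S$ to the left of $T$, using the eigenvalue relation, and pulling the scalar out by linearity of $S$. No earlier result of the paper is needed, and no topological or boundedness assumptions on $T$ or $S$ play any role; the Banach space structure of $H$ is irrelevant beyond $H$ being a vector space.

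Concretely, I will write the chain
\[
T(Sf)=(TS)f=(ST)f=S(Tf)=S(\lambda f)=\lambda Sf.
\]
The first and third equalities are just associativity of composition. The second uses the hypothesis $TS=ST$. The fourth uses $Tf=\lambda f$, and the last uses linearity of $S$. This shows that $Sf$ satisfies $T(Sf)=\lambda Sf$.

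I will also remark that if $Sf\neq 0$, then $Sf$ is again an eigenvector of $T$ for the eigenvalue $\lambda$. If $Sf=0$, the identity holds trivially. In other words, $S$ leaves the eigenspace $\ker(\lambda I-T)$ invariant, which is presumably how the lemma will be used later, with $S\in\Tlim$ commuting with $T$ by Proposition~\ref{rem:prop-of-limits}(a).

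There is no genuine obstacle here. The only point requiring care is to use commutation in the order $TS=ST$ rather than any adjoint relation, since $T$ is not assumed normal or unitary.
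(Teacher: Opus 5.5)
Your proof is correct and is essentially the paper's own argument, which is the one-line computation $TSf=STf=\lambda Sf$ using commutativity, the eigenvalue relation and linearity of $S$.
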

\begin{proof}
The assertion follows by $TSf=STf=\lambda Sf$. 
\end{proof}

%{\color{red}We now show that $\mathcal{M}$ can be dense (and hence a generic transformation belongs to $\mathcal{M}$) only for operators $V$ of the form $\lambda I$.}
%The following is an ergodic theoretic counterpart of \cite[Prop.~IV.3.14]{E-book}.

\begin{prop}\label{thm:lambdaI}
Let $V$ be a linear operator on $L_0^2[0,1]$. Assume that the set
% $\mathcal{M}$
$$
\mathcal{M}:= \{T\in \mathcal{T}:\ \limk T^{n_k}_0= V \text{ weakly for some subsequence $(n_k)$ of $\N$}\}
$$
is dense in $\mathcal{T}$. Then $V=\lambda I$ for some $\lambda\in [0,1]$.
%$\lambda\in\D$. 
\end{prop}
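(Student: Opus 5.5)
Since $\mathcal{M}$ is dense, we may choose transformations $T\in\mathcal{M}$ with prescribed spectral behaviour. By Lemma \ref{lem:comm}, $V$ must then respect the eigenspaces of these $T$. The plan is to use this to force $V$ to commute with a dense set of operators and hence to be scalar.

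\textbf{Step 1: $V$ commutes with $T_0$ for every $T\in\mathcal{M}$.} If $T_0^{n_k}\to V$ weakly, then $VT_0=T_0V$, since $T_0^{n_k}T_0=T_0T_0^{n_k}$ converges weakly to both sides (Proposition \ref{rem:prop-of-limits}(a)). By Lemma \ref{lem:comm}, every eigenspace $\ker(T_0-\mu)$ of such a $T$ is $V$-invariant. Moreover $\|V\|\le 1$. Also $V$ is positive, being a weak limit of positive operators, and $V$ is real, i.e.\ $V\bar f=\overline{Vf}$.

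\textbf{Step 2: exploit periodic transformations with simple spectrum on eigenspaces.} Fix disjoint measurable sets $A,B$ of equal positive measure. The aim is to show $\la V(\indic_A-\indic_B),\indic_A-\indic_B\ra$-type relations, or more concretely that $V$ maps every function of the form $\indic_A-\indic_B$ to a multiple of itself with a multiplier independent of $A,B$.

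Take a cyclic permutation $P$ of $N$ dyadic intervals, extended to a measure-preserving map. It is periodic with period $N$, and its eigenspaces are spanned by the functions $\sum_j \mu^{j}\indic_{I_j}\circ(\text{translations})$. These eigenspaces are large, however, since a cyclic permutation of intervals has infinite-multiplicity eigenvalues.

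The fix is to use density more cleverly. Points of $\mathcal{M}$ approximate $P$, but eigenspaces are not stable under approximation, so directly approximating a prescribed operator fails. Instead I use closedness: the set of $S$ commuting with $V$ is $\wot$-closed, and in fact $\sot$-closed, since $V$ is bounded. Because $\mathcal{M}$ is dense in $\mathcal{T}$ for the strong topology, every $T\in\mathcal{T}$ is an $\sot$-limit of $T_j\in\mathcal{M}$, and $VT_{j,0}=T_{j,0}V$ passes to the limit. Hence $V$ commutes with $T_0$ for \emph{all} $T\in\mathcal{T}$. This is the key step.

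\textbf{Step 3: the commutant of $\mathcal{T}_0$ is scalar.} Now show that an operator commuting with all Koopman operators restricted to $L^2_0$ is $\lambda I$. The commutant of the Koopman representation of $\mathcal{T}$ on $L^2[0,1]$ consists of operators preserving every $T$-invariant subspace.

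Take any measurable $A$ with $0<m(A)<1$. Consider all $T\in\mathcal{T}$ that fix $A$ setwise and act ergodically on $A$ and on $A^c$. The functions in $L^2_0$ invariant under all such $T$ form the one-dimensional space spanned by $g_A=\indic_A-m(A)$. Since $V$ commutes with each such $T$, it preserves their common fixed space, so $Vg_A=\lambda_A g_A$.

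Next I need $\lambda_A$ to be independent of $A$. For this I use transformations mapping $A$ to $B$ when $m(A)=m(B)$: such a $T$ satisfies $Tg_B=g_A$, so applying $V$ gives $\lambda_A=\lambda_B$. For general $A,B$, I use linearity. With $A\cap B=\emptyset$, the function $g_A+g_B=g_{A\cup B}$ is an eigenvector, which forces $\lambda_A=\lambda_B=\lambda_{A\cup B}$ when $g_A,g_B$ are linearly independent. Since the $g_A$ span a dense subspace of $L^2_0$, and $V$ is bounded, we get $V=\lambda I$.

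\textbf{Step 4: $\lambda\in[0,1]$.} Since $\|V\|\le1$, we have $|\lambda|\le1$. Positivity gives $\lambda\ge0$. Concretely, $\la T^{n_k}\indic_A,\indic_A\ra\ge0$ gives $\lambda\,\|g_A\|^2=\lim_k\la T_0^{n_k}g_A,g_A\ra=\lim_k\bigl(m(T^{-n_k}A\cap A)-m(A)^2\bigr)$. This shows $\lambda$ is real, and it is real with $\lambda\ge -m(A)/(1-m(A))$. Letting $m(A)\to0$ yields $\lambda\ge0$.

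The main obstacle I expect is Step 3, in particular verifying that the common fixed space is exactly spanned by $g_A$, which needs some ergodic element fixing $A$. This is standard via cyclic-permutation approximations.
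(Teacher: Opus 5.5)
Your proof is correct. The first step is the paper's: density of $\mathcal{M}$ and $\sot$-closedness of the commutant of $V$ give $VT_0=T_0V$ for all $T\in\mathcal{T}$. You then show this commutant is scalar by a genuinely different route.

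The paper diagonalizes $V$ in the Fourier basis $(f_k)_{k\neq 0}$ using an irrational rotation, whose eigenvalues are simple. It then links all the $\lambda_k$ with one explicit transformation $S$ (identity on $[0,1/2)$, reflection on $(1/2,1]$), computing the Fourier supports $M_1,M_2$ of $Sf_1,Sf_2$.

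You work instead with $g_A=\indic_A-m(A)$:
\begin{itemize}
\item A map preserving $A$ and ergodic on both $A$ and $A^c$ has fixed space $\C g_A$ in $L^2_0[0,1]$, so $Vg_A=\lambda_A g_A$.
\item A map sending $A$ onto $B$ shows that $\lambda_A$ depends only on $m(A)$.
\item For disjoint $A,B$ with $m(A\cup B)<1$ (exactly the case where $g_A,g_B$ are independent), the identity $g_A+g_B=g_{A\cup B}$ equates the values for different measures.
\end{itemize}

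Your route needs no computation and works verbatim on any standard non-atomic probability space. The price is invoking the isomorphism theorem for standard measure spaces, to get ergodic maps on $A$ and $A^c$ and maps carrying $A$ onto $B$. The paper needs only two concrete transformations and elementary integrals. Your Step 4, via $\lambda\, m(A)(1-m(A))=\lim_k\bigl(m(T^{-n_k}A\cap A)-m(A)^2\bigr)$, is a clean alternative to the paper's pointwise positivity argument and gives reality of $\lambda$ directly.

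When you write it up, fix four things:
\begin{itemize}
\item Remove the abandoned cyclic-permutation discussion in Step 2.
\item Remove the claim that the commutant consists of operators preserving every invariant subspace. It is false in general (take $T=I$) and you never use it.
\item Note that ``$V$ positive'' has no meaning on $L^2_0[0,1]$; you only use positivity of $T$ on $L^2[0,1]$.
\item Spell out the case $m(A)+m(B)\geq 1$ by passing through an auxiliary set $C$ of small measure.
\end{itemize}
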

%In particular, a generic transformation $T$ can have only operators of the form $\lambda I$ with $\lambda\in\D$ as weak limit points of the powers of $T_0$. This gives an ergodic theoretic counterpart of \cite[Prop.~IV.3.14]{E-book}.
\begin{proof}
We first show that $V=\lambda I$ holds for some $\lambda\in\C$. For this we first observe that $T_0V=VT_0$ holds for every $T\in \mathcal{T}$. This goes as in \cite[Prop.~IV.3.14]{E-book} and follows from the fact that $V$ commutes with $T_0$ for every $T\in \mathcal{M}$ by $T_0V=\displaystyle \lim_{k\to\infty} T_0^{n_k+1}=VT_0$ and the density of $\mathcal{M}$ in $\mathcal{T}$. 

Denote by $f_k$ the functions on $[0,1]$ given by $f_k(x):=e^{2\pi i k x}$ for $k\in \Z$. Note that $(f_k)_{k\neq 0}$ is an orthonormal basis of $L_0^2[0,1]$. Since by the above $V$ commutes with the operator $R_0$ for an irrational rotation $R$ on $[0,1]$ and each $f_k$ is an eigenfunction of $R$, Lemma \ref{lem:comm} implies that $Vf_k$ is again an eigenfunction of $R$. Since irrational rotations are ergodic and hence have one-dimensional eigenspaces, we see that $Vf_k=\lambda_kf_k$ for some $\lambda_k\in \C$. We thus need to show that all $\lambda_k$ are equal.  

Again by Lemma \ref{lem:comm} we see that for every $S\in \mathcal{T}$ one has $V(S_0f_k)=\lambda_k S_0 f_k$. Assume that for some $k\neq 0$ the function $Sf_k=S_0f_k$ has nontrivial correlation with $f_j$ for every $j$ in some set $M_k\subset \Z\setminus\{0\}$, i.e., $Sf_k=\sum_{j\neq 0} c_j f_j$ with $c_j\neq 0$ for every $j\in M_k$. We show that in this case the eigenvalues $\lambda_j$ are all equal to $\lambda_k$ for $j\in M_k$. Indeed, we have 
$$
VSf_k=V\left(\sum_{j\neq 0} c_j f_j\right)=\sum_{j\neq 0} c_j\lambda_j f_j
$$
and on the other hand 
$$
VSf_k=\lambda_k Sf_k = \sum_{j\neq 0} c_j\lambda_k f_j.
$$
By the uniqueness of the Fourier coefficients and the assumption that $c_j\neq 0$ for every $j\in M_k$ we see that $\lambda_k=\lambda_j$ for every $j\in M_k$. 
It thus suffices to find an invertible measure-preserving transformation $S$ such that for, say, $f_1$ and $f_2$ the corresponding sets of indices satisfy 
\begin{equation}\label{eq:M1M2}
M_1\cup M_2=\Z\setminus \{0\} \quad\text{ and }\quad M_1\cap M_2\neq \emptyset.
\end{equation}

Consider now the transformation $S$ on $[0,1]$ given by 
$$
Sx=\begin{cases}
x, \quad & x\in [0,1/2),\\
\frac32-x, \quad & x\in (1/2,1]
\end{cases}
$$
which is clearly invertible and measure preserving. Compute 
\begin{eqnarray}
\langle Sf_1,f_j\rangle &=&\int_0^{1/2} e^{2\pi i (1-j)x}\,dx +\int_{1/2}^1 e^{2\pi i (\frac{3}2-x) }e^{-2\pi i j x}\,dx\nonumber\\ 
&=&\int_0^{1/2} e^{2\pi i (1-j)x}\,dx -\int_{1/2}^1 e^{2\pi i (-1-j)x}\,dx.\label{eq:Sf1fj}
\end{eqnarray}
For $j\notin\{-1,1\}$  we continue as 
\begin{eqnarray*}
\langle Sf_1,f_j\rangle&=&\frac{e^{\pi i (1-j)}-1}{2\pi i (1-j)} - \frac{1-e^{\pi i (-1-j)}}{2\pi i (-1-j)}
=\frac{-e^{-\pi i j}-1}{2\pi i (1-j)} - \frac{1+e^{-\pi i j}}{2\pi i (-1-j)}\\
&=& \frac{1+e^{-\pi i j}}{2\pi i}\left( \frac1{j-1}+\frac1{j+1}\right)
=\frac{2j(1+e^{-\pi i j})}{2\pi i(j^2-1)}
\end{eqnarray*}
which is nonzero if and only if $j\in 2\Z \setminus \{0\}$. Moreover,  \eqref{eq:Sf1fj} implies 
$$
\langle Sf_1,f_{-1}\rangle=\int_0^{1/2} e^{4\pi i x}\,dx - \frac12=-\frac12
$$
and analogously $\langle Sf_1,f_{1}\rangle=\frac12$. Therefore 
\begin{equation}\label{eq:M1}
M_1=2\Z\setminus \{0\} \cup \{\pm 1\}.
\end{equation}

Compute now 
\begin{eqnarray}
\langle Sf_2,f_j\rangle &=&\int_0^{1/2} e^{2\pi i (2-j)x}\,dx +\int_{1/2}^1 e^{4\pi i (\frac{3}2-x) }e^{-2\pi i j x}\,dx\nonumber\\ 
&=&\int_0^{1/2} e^{2\pi i (2-j)x}\,dx +\int_{1/2}^1 e^{2\pi i (-2-j)x}\,dx.\label{eq:Sf2fj}
\end{eqnarray}
For $j\notin\{-2,2\}$  we continue as 
\begin{eqnarray*}
\langle Sf_2,f_j\rangle&=&\frac{e^{\pi i (2-j)}-1}{2\pi i (2-j)} + \frac{1-e^{\pi i (-2-j)}}{2\pi i (-2-j)}
=\frac{e^{-\pi i j}-1}{2\pi i (2-j)} + \frac{1-e^{-\pi i j}}{2\pi i (-2-j)}\\
&=& \frac{1-e^{-\pi i j}}{2\pi i}\left( \frac1{j-2}-\frac1{j+2}\right)
=\frac{4(1-e^{-\pi i j})}{2\pi i(j^2-4)}
\end{eqnarray*}
which is nonzero if and only if $j\in 2\Z +1$. Moreover, by \eqref{eq:Sf2fj} 
$$
\langle Sf_2,f_{-2}\rangle=\frac12=\langle Sf_2,f_{2}\rangle
$$
implying
\begin{equation}\label{eq:M2}
M_2=(2\Z+1) \cup \{\pm 2\}.
\end{equation}
This we see by \eqref{eq:M1} and \eqref{eq:M2} that the condition \eqref{eq:M1M2} is satisfied.  This proves $V=\lambda I$ for some $\lambda \in \C$.

It remains to show that $\lambda \in [0,1]$. We argue as in Stepin \cite[p.~171]{Stepin86}. Since $V$ is a contraction and maps real functions to real functions by assumption, we have $\lambda\in[-1,1]$. It remains to show $\lambda\geq 0$. Take $\veps\in (0,1)$ and let $f$ be the characteristic function of a set $A$ with $m(A)=\veps$. Write $f=f_1+f_0$ with $f_1$ being the constant function equal to $\int_{[0,1]}f dm=\veps$ and $f_0=f-f_1\in L^2_0[0,1]$. Then
$$
T^{n_k}f\to f_1+\lambda (f-f_1) \quad \text{weakly}.
$$
Since $T$ preserves positivity, we see that the right hand side is greater than or equal to zero. Taking $x\in A$ we obtain $\veps +\lambda (1-\veps)\geq 0.$
Since this holds for every $\veps\in(0,1)$, $\lambda\geq 0$ follows.
% almost everywhere. 
%, we see that $|\lambda|\leq 1$. Moreover, $V$ maps real functions to real functions by assum}
%The proof is complete.}
\end{proof}

\begin{defi}\label{def:lambda-rigid-et}
For $\lambda\in [0,1]$, one calls a measure-preserving transformation $T$ on a probability space $(X,\mu)$ \emph{$\lambda$-rigid} (or \emph{$(1-\lambda)$-mixing}) if there exists a subsequence $(n_k)$ of $\N$ such that $\displaystyle \limk T_0^{n_k}=\lambda I$ in the weak operator topology. A transformation is called \emph{rigid} if it is $1$-rigid, and we call it \emph{$[0,1]$-rigid} if it is $\lambda$-rigid for every $\lambda \in [0,1]$.
%$\lambda I$ belongs to the closure of $\{T_0, T^2_0,\ldots\}$ in the weak operator topology. 
\end{defi}
%Note that this is equivalent to the existence of a subsequence $(n_k)$ of $\N$ with $\lim_{k\to\infty}T^{n_k}_0=\lambda I$ in the weak operator topology. 
Note %further 
that %$1$-rigidity  coincides with rigidity and 
$0$-rigidity coincides with weak mixing. 

The following shows that operators of the form $\lambda I$ for $\lambda\in[0,1]$ indeed appear as elements of $\Tnulllim$ for a typical transformation. 

\begin{thm}[Stepin \cite{Stepin86}]\label{thm:stepin}
For %every Lebesgue probability space $(X,\mu)$ and 
every $\lambda\in[0,1]$, the set of all $\lambda$-rigid transformations 
is a dense $G_\delta$ subset of $\mathcal{T}$. 
\end{thm}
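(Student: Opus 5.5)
The plan has two parts, handled separately: the set of $\lambda$-rigid transformations is a $G_\delta$, and it is dense. Fix a sequence $(g_j)$ dense in $L^2_0[0,1]$ and use the metric $d$ inducing $\wot$ on the unit ball, as in Section~\ref{introsubsecoperators}.

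For the $G_\delta$ part, $T$ is $\lambda$-rigid if and only if
$$
T\in\bigcap_{m\ge1}\bigcap_{N\ge1}\bigcup_{n\ge N}\{S\in\mathcal{T}:\ d(S_0^n,\lambda I)<1/m\}.
$$
The direction ``$\Rightarrow$'' is immediate. For ``$\Leftarrow$'' one extracts a diagonal sequence $n_k\to\infty$ with $d(T_0^{n_k},\lambda I)\to0$. Since the powers lie in the unit ball, where $\wot$ is metrized by $d$, this gives $\limk T_0^{n_k}=\lambda I$ weakly. Each set in the union is open, because $S\mapsto S^n$ is continuous on $\mathcal{T}$ (where strong and weak topologies coincide). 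Hence the whole set is a $G_\delta$.

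For density, it suffices by Halmos' Conjugacy Lemma to exhibit a single aperiodic (say ergodic) $\lambda$-rigid transformation $T$. Indeed, conjugates $\sigma T\sigma^{-1}$ are again $\lambda$-rigid, since $(\sigma T\sigma^{-1})_0^{n}=\sigma_0 T_0^n\sigma_0^{-1}$, and the conjugacy class of an aperiodic transformation is dense in $\mathcal{T}$. The cases $\lambda=0$ (any weakly mixing transformation) and $\lambda=1$ (an irrational rotation) are classical.

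For $\lambda\in(0,1)$ I would build a rank-one cutting-and-stacking transformation. Stage $k$ has a tower of height $h_k$. The tower is cut into columns, and spacers are arranged so that a fraction $\approx\lambda$ of the columns receive no spacers, while the remaining fraction $\approx 1-\lambda$ is shifted by spacer patterns that grow and vary with $k$. Then for each level set $A$ of the stage-$k$ tower, $\mu(T^{h_k}A\cap B)\approx\lambda\mu(A\cap B)+(1-\lambda)\mu(A)\mu(B)$ for sets $B$ measurable at stage $k$. An approximation argument yields $\limk T_0^{h_k}=\lambda I$ weakly on the dense set of step functions, and hence on all of $L^2_0$ by uniform boundedness.

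The main obstacle is making the ``spread-out'' part genuinely mix. One must verify that the shifted portion $(1-\lambda)$ becomes asymptotically independent of each fixed level set. This requires the spacer patterns to equidistribute the shifted mass over the tower, in the spirit of Chacon-type weak mixing arguments, with errors summable in $k$.

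An alternative, cleaner route is Gaussian systems. Take a continuous symmetric measure $\sigma$ on $\T$ with $\hat\sigma(n_k)\to\lambda$ (times its mass) for some $n_k\to\infty$. For example, $\sigma$ can be a Riesz-type product $\lambda\nu_1+(1-\lambda)\nu_2$, where $\nu_1$ is rigid along $(n_k)$ and $\nu_2$ is Rajchman along $(n_k)$. The Gaussian chaos then gives weak limit $\sum_m \lambda^m P_m$ on the $m$-th chaos, which is not $\lambda I$. So I would prefer the rank-one construction, possibly citing Stepin's periodic approximation as the source of the equidistribution estimate.
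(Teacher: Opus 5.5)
Your $G_\delta$ argument and your reduction of density to a single aperiodic $\lambda$-rigid example via Halmos' Conjugacy Lemma are correct. They match the skeleton the paper uses in Lemma~\ref{prop:polyn-Gdelta} and the proof of Theorem~\ref{thm:generic-transf}; the paper itself gives no proof of this statement and cites Stepin. Your cases $\lambda\in\{0,1\}$ are fine, and you are right to discard the Gaussian route.

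\textbf{The gap.} The case $\lambda\in(0,1)$, which is the whole content of the theorem, is not proved. Your rank-one construction is described only qualitatively: no spacer pattern is specified. The asymptotic $\mu(T^{h_k}A\cap B)\approx\lambda\mu(A\cap B)+(1-\lambda)\mu(A)\mu(B)$ is stated as a target, not derived. You flag the mixing of the shifted part as ``the main obstacle'' and defer it to unspecified Chacon-type estimates or to Stepin's paper. This step is not a formality. For example, if all shifted columns received the same spacer number $s_k$, the shifted contribution would be $(1-\lambda)\mu(T^{-s_k}A\cap B)$, and nothing forces this to tend to $(1-\lambda)\mu(A)\mu(B)$.

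\textbf{How to close it.} The missing idea is short. In the rank-one picture, $T^{h_k}$ maps level $j$ of column $i$ to level $j-s_{k,i}$ of column $i+1$ (for $j\ge s_{k,i}$). Hence, for $A,B$ unions of levels of a fixed tower, up to an error tending to $0$ (from the top column, the lowest $L_k$ levels, and rounding),
\[ \mu(T^{h_k}A\cap B)=\lambda\mu(A\cap B)+\frac{1}{r_k}\sum_{i\in I_k}\mu(T^{-s_{k,i}}A\cap B), \]
where $I_k$ is the set of roughly $(1-\lambda)r_k$ shifted columns. Let the $s_{k,i}$, $i\in I_k$, run uniformly through $\{1,\dots,L_k\}$, with $L_k\to\infty$ and $\sum_k L_k/h_k<\infty$ so the measure stays finite. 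Then the last term equals $(1-\lambda)\frac{1}{L_k}\sum_{s=1}^{L_k}\mu(T^{-s}A\cap B)+o(1)$. This tends to $(1-\lambda)\mu(A)\mu(B)$ by von Neumann's mean ergodic theorem, since rank-one transformations are ergodic. No summable error estimates are needed.

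Alternatively, the paper's own material yields the statement without any construction depending on $\lambda$. The weakly mixing Katok--Stepin transformation of Proposition~\ref{prop:ex-Katok} satisfies $\frac12(I+T^k)\in\Tlim$ for all $k\ge 0$, so $\Tlim$ is convex by Proposition~\ref{rem:prop-of-limits}(d). It contains $I$ (take $k=0$) and $\Pconst$ (weak mixing), hence $\lambda I+(1-\lambda)\Pconst$ for all $\lambda\in[0,1]$ simultaneously. Combined with Halmos' lemma and Lemma~\ref{prop:polyn-Gdelta}, this is Theorem~\ref{thm:generic-transf}, which contains Stepin's theorem and Corollary~\ref{cor:lambda-rigid-typical}. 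That route rests on Katok's periodic-approximation machinery but handles every $\lambda$ at once. Your repaired rank-one argument is elementary and self-contained.
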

Thus, by taking rational $\lambda$ and intersecting the corresponding dense $G_\delta$ sets, one has the following.
\begin{cor}\label{cor:lambda-rigid-typical}
%A generic transformation in $\mathcal{T}$ is $[0,1]$-rigid.
The set of $[0,1]$-rigid transformations is dense $G_\delta$ in $\mathcal{T}$.
\end{cor}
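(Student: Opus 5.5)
The plan is to use Theorem \ref{thm:stepin} for each rational $\lambda$, intersect the resulting sets, and then pass from rational to arbitrary $\lambda$ by a closedness argument.

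\emph{Step 1 (countable intersection).} For each $\lambda\in\mathbb{Q}\cap[0,1]$, Theorem \ref{thm:stepin} says the set $\mathcal{R}_\lambda$ of $\lambda$-rigid transformations is a dense $G_\delta$ subset of $\mathcal{T}$. Since $\mathcal{T}$ is Polish, the Baire category theorem shows that
$$
\mathcal{G}:=\bigcap_{\lambda\in\mathbb{Q}\cap[0,1]}\mathcal{R}_\lambda
$$
is still a dense $G_\delta$ set, because it is a countable intersection of dense $G_\delta$ sets.

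\emph{Step 2 ($\mathcal{G}$ is exactly the set of $[0,1]$-rigid transformations).} One inclusion is trivial, since every $[0,1]$-rigid transformation lies in $\mathcal{G}$. For the converse, fix $T\in\mathcal{G}$ and an arbitrary $\lambda\in[0,1]$.

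By Proposition \ref{rem:prop-of-limits}(a), applied to the contraction $T_0$ on the separable Hilbert space $L^2_0[0,1]$, the set $\Tnulllim$ is closed in the weak operator topology. It contains $\mu I$ for every rational $\mu\in[0,1]$. The map $\mu\mapsto\mu I$ is norm-continuous, hence also \wot-continuous. Choosing rationals $\mu_j\to\lambda$ gives $\mu_j I\to\lambda I$ weakly, and closedness yields $\lambda I\in\Tnulllim$. Thus $T$ is $\lambda$-rigid for every $\lambda\in[0,1]$.

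\emph{The delicate point.} The only step needing care is the closedness of $\Tnulllim$. It requires a diagonal argument: the \wot\ is metrizable on the closed unit ball because $L^2_0$ is separable, and this lets one extract a strictly increasing sequence of exponents. Proposition \ref{rem:prop-of-limits}(a) already provides this.

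Alternatively, the step can be done by hand. Fix a metric $d$ for \wot\ on the unit ball. For each $j$, pick $n_j>n_{j-1}$ with $d(T_0^{n_j},\mu_j I)<1/j$; this is possible because $\mu_j I$ is a limit along exponents tending to infinity. Then $T_0^{n_j}\to\lambda I$ weakly.
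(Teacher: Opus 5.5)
Your proof is correct and is essentially the paper's argument: intersect the dense $G_\delta$ sets from Theorem \ref{thm:stepin} over rational $\lambda\in[0,1]$ and invoke Baire's theorem in the Polish space $\mathcal{T}$. The paper leaves the passage from rational to arbitrary $\lambda$ implicit, and you handle it as intended: the \wot-closedness of $\Tnulllim$ from Proposition \ref{rem:prop-of-limits}(a), or your diagonal extraction, shows that the intersection is exactly the set of $[0,1]$-rigid transformations.
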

%{\color{blue}We will present a class of examples of $\overline{\D}$-rigid Gaussian transformations in Section \ref{sec:ex-transf} below. }

 Proposition \ref{thm:lambdaI} and Theorem \ref{thm:stepin} imply the following. Recall that $\Pconst$ denotes the orthogonal projection onto the constants.

\begin{thm}[Fixed asymptotics of generic transformations]
An operator $V\in L^2[0,1]$ belongs to the weak limit semigroup $\Tlim$ for a generic transformation in $\mathcal{T}$ if and only if $V=\lambda I +(1-\lambda)\Pconst$ for some $\lambda\in[0,1]$.
%An operator $V\in L_0^2[0,1]$ belongs to the weak limit semigroup $\Tnulllim$ for a generic transformation in $\mathcal{T}$ if and only if $V=\lambda I$ for some $\lambda\in[0,1]$. 
\end{thm}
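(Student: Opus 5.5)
The plan is to reduce the statement to its counterpart on $L_0^2[0,1]$, namely Proposition \ref{thm:lambdaI} and Theorem \ref{thm:stepin}. The reduction uses the orthogonal decomposition $L^2[0,1]=\C\Eins\oplus L_0^2[0,1]$. Every $T\in\mathcal{T}$ satisfies $T\Eins=\Eins$ and leaves $L_0^2[0,1]$ invariant. Since $T$ is unitary, it also leaves $(L_0^2[0,1])^\perp=\C\Eins$ invariant, so $T^n=\Pconst+T_0^n(I-\Pconst)$ for every $n$. Hence $T^{n_k}\to V$ weakly on $L^2[0,1]$ if and only if $V\Eins=\Eins$, $V$ leaves $L_0^2[0,1]$ invariant, $V$ annihilates nothing in $\C\Eins$ beyond what is forced, and $T_0^{n_k}\to V|_{L_0^2}$ weakly. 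More precisely, $V=\Pconst+W(I-\Pconst)$ with $W=\operatorname{weak-}\lim T_0^{n_k}$. In particular, $V=\lambda I+(1-\lambda)\Pconst$ exactly when $W=\lambda I$ on $L_0^2[0,1]$.

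For the ``if'' direction, fix $\lambda\in[0,1]$. By Theorem \ref{thm:stepin}, the set of $\lambda$-rigid transformations is a dense $G_\delta$ in $\mathcal{T}$. For such $T$ there is a subsequence with $T_0^{n_k}\to\lambda I$ weakly. By the decomposition above, $T^{n_k}\to\Pconst+\lambda(I-\Pconst)=\lambda I+(1-\lambda)\Pconst$ weakly. So this operator lies in $\Tlim$ for generic $T$. Corollary \ref{cor:lambda-rigid-typical} even gives a single dense $G_\delta$ set that works for all rational $\lambda$ simultaneously. Extending to all $\lambda\in[0,1]$ is then immediate, because $\Tlim$ is \wot-closed by Proposition \ref{rem:prop-of-limits}(a).

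For the ``only if'' direction, suppose $V\in\Tlim$ for all $T$ in a comeager set $\mathcal{G}\subseteq\mathcal{T}$. Since $\mathcal{T}$ is Polish, the Baire category theorem makes $\mathcal{G}$ dense. Every $S\in\Tlim$ fixes $\Eins$ and leaves $L_0^2[0,1]$ invariant, so $V$ does as well. Write $W:=V|_{L_0^2[0,1]}$. Then for each $T\in\mathcal{G}$ some subsequence satisfies $T_0^{n_k}\to W$ weakly. So the set $\mathcal{M}$ of Proposition \ref{thm:lambdaI} contains $\mathcal{G}$ and is dense. That proposition yields $W=\lambda I$ with $\lambda\in[0,1]$, and therefore $V=\Pconst+\lambda(I-\Pconst)$, as claimed.

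The only point needing care is the interpretation of ``generic'': the statement should be read as requiring membership in $\Tlim$ for comeager many $T$. The one nontrivial input in the converse is that comeager implies dense, which holds because $\mathcal{T}$ is Polish. Everything else is bookkeeping with the decomposition $L^2=\C\Eins\oplus L^2_0$.
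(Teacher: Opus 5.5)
Your proof is correct and follows the paper's route. The ``only if'' direction is Proposition~\ref{thm:lambdaI} applied to $V|_{L_0^2[0,1]}$, since comeager sets are dense. The ``if'' direction is Stepin's Theorem~\ref{thm:stepin}, with the \wot-closedness of $\Tlim$ handling all $\lambda\in[0,1]$ at once. The splitting $T^n=\Pconst+T_0^n(I-\Pconst)$ supplies the translation between $\Tlim$ and $\Tnulllim$ that the paper leaves implicit. Your clause about $V$ ``annihilating nothing in $\C\Eins$ beyond what is forced'' adds nothing and can be dropped.
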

%\begin{proof}
%The ``only if'' direction follows from Theorem \ref{thm:lambdaI}. For the ``if'' direction, take a rational $\lambda\in[0,1]$. Stepin \cite{Stepin86} showed that $\lambda I$ is a weak limit of a the powers of a generic measure-preserving transformation. Since a countable intersection of dense $G_\delta$ subsets of $\mathcal{T}$ is again dense $G_\delta$ and the set of weak limit points is closed, a generic transformation has $\lambda I$ as  weak limit of the powers for every $\lambda\in [0,1]$.  
%\end{proof}

\subsection{Fixed asymptotics and $\lambda$-rigidity: operators}
%$\lambda$-rigidity for operators}

For the following operator theoretic counterpart of Proposition \ref{thm:lambdaI}, see \cite[Prop.~IV.3.14 and its proof]{E-book} \footnote{Note that here we use the fact that $\I$ and $\U$ are dense in $\mathcal{C}$ with respect to $\wot$, see \cite{EisnerSereny08}.}. 
%Here, for a Hilbert space $H$ we denote by $\mathcal{L}(H)$ the space of all linear bounded operators on $H$. Moreover, $\mathcal{U}$, $\mathcal{I}$ and $\mathcal{C}$ denote the spaces of unitaries, isometries and contractions on $H$ endowed with the strong($=$ weak), strong$^*$ and weak operator topologies, respectively. {\color{red}///Hier oder früher?///}

\begin{prop}\label{prop-multiple-identity-op}
Let $H$ be a separable Hilbert space and $V\in \mathcal{L}(H)$. If the set of operators $T$ on $H$ satisfying $V\in\Tlim$
%$
%\displaystyle \lim_{k\to\infty} T^{n_k}=V \text{ weakly} 
%$
is dense in one of the spaces $(\mathcal{U}, \emph{\sote})$, $(\mathcal{I}, \emph{\sot})$ and $(\mathcal{C}, \emph{\wot})$, then $V$ is a multiple of the identity. 
\end{prop}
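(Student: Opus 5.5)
The plan follows the same scheme as Proposition \ref{thm:lambdaI}. First we show that $V$ commutes with every operator in the relevant space, and then we use the fact that the commutant of a sufficiently large set of operators consists of scalars.

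\textbf{Step 1 (commutation on a dense set).} Let $\mathcal{M}$ be the set of $T$ in the given space with $V\in\Tlim$. For $T\in\mathcal{M}$, Proposition \ref{rem:prop-of-limits}(a) gives $TV=VT$. Indeed, if $T^{n_k}\to V$ weakly, then $T^{n_k+1}=T\cdot T^{n_k}=T^{n_k}\cdot T$. Left multiplication by a fixed bounded operator is \wot-continuous, and so is right multiplication. Hence both sides converge weakly, to $TV$ and $VT$ respectively.

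\textbf{Step 2 (extend by density).} The map $T\mapsto TV-VT$ is continuous from $(\mathcal{C},\wot)$ to $(\mathcal{L}(H),\wot)$, since $\la (TV-VT)x,y\ra=\la T(Vx),y\ra-\la Tx,V^*y\ra$. The topologies $\sot$ and $\sote$ are finer than $\wot$, so the same map is continuous on $(\I,\sot)$ and on $(\U,\sote)$. Its zero set is therefore closed and contains the dense set $\mathcal{M}$, so $V$ commutes with every element of the whole space. In the cases $\mathcal{C}$ and $\I$ this space contains $\U$. So in all three cases $V$ commutes with every unitary operator on $H$.

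\textbf{Step 3 (commutant of the unitaries is trivial).} Every bounded operator is a linear combination of (four) unitaries: write $A=\Re A+i\,\mathrm{Im}A$, and for a self-adjoint contraction $B$ write $B=\tfrac12\bigl((B+i\sqrt{I-B^2})+(B-i\sqrt{I-B^2})\bigr)$. Hence $V$ lies in the center of $\mathcal{L}(H)$, which consists of the scalar multiples of $I$.

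A more elementary version of the last step avoids the decomposition. Fix $x\neq0$ and take a unitary $U$ with $Ux=x$ that acts on $\{x\}^\perp$ without nonzero fixed vectors, for example $-I$ there. Then $UVx=VUx=Vx$, so $Vx\in\ker(U-I)=\C x$. Thus every vector is an eigenvector of $V$, and the standard argument applied to $x$, $y$ and $x+y$ shows that $V=\lambda I$. The only mild care needed is the continuity claim in Step 2 for each of the three topologies; the rest is routine.
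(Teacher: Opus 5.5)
Your proof is correct, but it finishes the argument differently from the paper. The paper does not write out a proof. It refers to \cite[Prop.~IV.3.14]{E-book}, and its footnote says the argument uses the $\wot$-density of $\I$ and $\U$ in $\mathcal{C}$ from \cite{EisnerSereny08}. So the paper's route goes through $\mathcal{C}$. A set dense in $(\U,\sote)$ or $(\I,\sot)$ is also $\wot$-dense in $\mathcal{C}$, hence $V$ commutes with every contraction. By scaling, $V$ then commutes with all of $\mathcal{L}(H)$, which forces $V\in\C I$.

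You go the other way. Since $\U$ sits inside all three spaces, your Steps 1--2 already show that $V$ commutes with every unitary. You then prove directly that the commutant of $\U$ is trivial, either by writing any operator as a combination of four unitaries or by using the reflection that fixes $x$ and equals $-I$ on $\{x\}^\perp$.

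This avoids the Eisner--Serény density theorem entirely. That theorem is not elementary, and it is false when $\dim H<\infty$, since then $\U$ is $\wot$-closed in $\mathcal{C}$. So your argument also covers finite-dimensional $H$ in the $\U$ and $\I$ cases. The paper's route is shorter only once that density result is taken as given.
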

%\begin{remark}\label{rem:weak-implies-strong}
%Since weak operator convergence of contractions to an isometry implies strong operator convergence, see, i.e., \cite[Lemma II.5.11]{E-book}, one can replace ``weakly'' by ``strongly'' in Proposition \ref{prop-multiple-identity-op} for the spaces $\mathcal{U}$ and $\mathcal{I}$ whenever $V$ is an isometry.
%\end{remark}

We now present the operator theoretic counterpart of Definition \ref{def:lambda-rigid-et} and Corollary \ref{cor:lambda-rigid-typical}, and show that operators of the form $\lambda I$ with $\lambda\in \overline{\D}$  do appear as weak %(resp., strong) %strong/weak 
limits for typical operators.
%shows that, as in the case of transformations, $\lambda$-rigidity is typical for operators. 
\begin{defi}\label{def:rigid-op}
Let $\lambda\in \C$. % \in \D:=\{\lambda\in\C:\, |\lambda|\leq 1\}$. A 
A linear bounded operator $T$ on a Hilbert space $H$ is called \emph{$\lambda$-rigid} if there exists a subsequence $(n_k)$ of $\N$ such that $\displaystyle \limk T^{n_k}=\lambda I$  weakly. We call an operator \emph{rigid} if it is $1$-rigid and \emph{$\overline{\D}$-rigid} if it is $\lambda$-rigid for every $\lambda\in \overline{\D}$.
\end{defi}
We are interested  in the case of contractions where %as in Remark \ref{rem:weak-implies-strong}, 
one can replace ``weakly'' by ``strongly'' in the above definition whenever $|\lambda|=1$. (Recall that $\wot$ convergence of contractions to an isometry on a Hilbert space implies $\sot$ convergence, see, e.g., \cite[Lemma II.5.11]{E-book}.)

The following is an operator theoretic analogue of Halmos' Conjugacy Lemma (Proposition \ref{prop:Halmos-conjugacy}) due to Choksi and Nadkarni \cite{ChoksiNadkarni90}, see also Nadkarni \cite[Prop. 8.23]{Nadkarni-book}. 
\begin{proposition}[Conjugacy Lemma for unitary operators]\label{prop:conjugacy-lemma-op}
Let $R$ be a unitary operator on a separable Hilbert space with $\sigma(R)=\T$.
%the closed support of the maximal spectral type of $R$ equals $\T$. 
Then the conjugacy class $\{S^{-1}RS:\, S\in\mathcal{U}\}$ is dense in $(\mathcal{U}, \emph{\sote})$.  
\end{proposition}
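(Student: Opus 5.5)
Since $(\mathcal{U},\sote)$ is a metric space whose topology is generated by the seminorms $\|Tx\|$ and $\|T^*x\|$, it suffices to show the following. Fix $U\in\mathcal{U}$, finitely many vectors $x_1,\dots,x_m$ and $\veps>0$. We must find $S\in\mathcal{U}$ with $\|(S^{-1}RS-U)x_i\|<\veps$ and $\|(S^{-1}RS-U)^*x_i\|<\veps$ for all $i$. Since $S^{-1}RS$ and $U$ are unitary, $(S^{-1}RS)^*=S^{-1}R^{-1}S$.

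The plan has three steps: replace $U$ by a nearby unitary with simple diagonal structure, realise that structure approximately inside $R$ using $\sigma(R)=\T$, and then transport it by a unitary $S$.

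\emph{Step 1: approximating $U$.} By the spectral theorem, $U$ is a norm-limit of unitaries of the form $\sum_{j=1}^N \mu_j P_j$. Here $\mu_j\in\T$ are distinct and the $P_j$ are orthogonal spectral projections summing to $I$; this comes from partitioning $\T$ into small arcs. Next, choose finite-dimensional subspaces $F_j\subseteq P_jH$ that almost contain the projections $P_jx_i$, and enlarge each $F_j$ to be infinite dimensional where possible. The operator that is $\mu_j$ on each $P_jH$ is then $\sote$-close to $U$ on the $x_i$, so we may take $U=\sum_j\mu_jP_j$ and aim to approximate it on $F=\bigoplus_j F_j$.

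\emph{Step 2: approximate eigenvectors of $R$.} Since $\mu_j\in\T=\sigma(R)$, and $R$ is normal, $\mu_j$ lies in the approximate point spectrum of $R$. In fact, for every $\delta>0$ the spectral subspace $E_R(B(\mu_j,\delta))H$ is nonzero. Choose $\delta$ small enough that the arcs $B(\mu_j,\delta)\cap\T$ are pairwise disjoint. The spectral subspaces $G_j:=E_R(B(\mu_j,\delta))H$ are then mutually orthogonal, and on $G_j$ we have $\|R-\mu_j\|\le\delta$ and $\|R^{-1}-\ol{\mu_j}\|\le\delta$.

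\emph{Step 3: constructing $S$, the main obstacle.} We need an isometric embedding of each $F_j$ into $G_j$, extended to a unitary $S$ of the whole space $H$. This needs the dimension bookkeeping to match: $\dim F_j\le \dim G_j$, and the orthogonal complements of $\bigoplus_j F_j$ and of the image must have equal dimension.

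If $G_j$ is infinite dimensional, both conditions are automatic, since we may take $\dim F_j$ finite and $H$ is separable and infinite dimensional. The delicate case is when $E_R(B(\mu_j,\delta))$ has small finite rank, for instance when $\mu_j$ is an isolated eigenvalue of finite multiplicity. I would avoid this by first perturbing the $\mu_j$ slightly and splitting each $P_j$ into several pieces with nearby distinct values $\mu_{j,1},\dots,\mu_{j,d_j}$, where $d_j=\dim F_j$. Each new value then needs only a one-dimensional piece of $F_j$. This is possible because any arc of $\T$ meets $\sigma(R)$ in infinitely many points or in a set carrying infinite-dimensional spectral subspaces, as $\sigma(R)$ is all of $\T$. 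So we can choose distinct points $\mu_{j,l}\in\T$ with pairwise disjoint small arcs, each with nonzero spectral subspace, giving one unit vector $g_{j,l}$ per basis vector of $F_j$. If $H$ is finite dimensional, $\sigma(R)=\T$ is impossible, so we may assume $\dim H=\infty$.

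We then define $S$ on an orthonormal basis $e_{j,l}$ of $F$ by $Se_{j,l}=g_{j,l}$. Since $H$ is infinite dimensional and both $F$ and $\operatorname{span}\{g_{j,l}\}$ are finite dimensional, their complements are both infinite dimensional and separable, so $S$ extends to a unitary. Finally,
$$
\|S^{-1}RSe_{j,l}-\mu_{j,l}e_{j,l}\|=\|Rg_{j,l}-\mu_{j,l}g_{j,l}\|\le\delta,
$$
and similarly for $R^{-1}$ against $\ol{\mu_{j,l}}$. By linearity, on $F$ the operator $S^{-1}RS$ is within $O(\delta\dim F)$ of the diagonal unitary $\sum\mu_{j,l}e_{j,l}\otimes e_{j,l}$, and this diagonal unitary is close to $U$ on the $x_i$ by Step 1. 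The adjoint estimate is the same computation with $R^{-1}$ in place of $R$. Choosing $\delta$ small relative to $\veps$ and $\dim F$ finishes the proof.
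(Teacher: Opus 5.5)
\textbf{The proposal is correct.} The paper gives no proof of this proposition. It imports it from Choksi and Nadkarni (see also Nadkarni's book, Prop.~8.23), so there is no in-paper argument to compare your route with. Your proof is self-contained and uses only two inputs:
\begin{itemize}
\item the spectral theorem for $U$, which gives norm approximation by $\sum_j\mu_jP_j$;
\item the reformulation of $\sigma(R)=\T$ as ``$E_R(I)\neq 0$ for every nonempty open arc $I\subseteq\T$''.
\end{itemize}
The second input yields orthonormal approximate eigenvectors $g_{j,l}$ of $R$ at any prescribed finite set of distinct points. A unitary $S$ sending an orthonormal basis of the finite-dimensional space $F$ onto these vectors exists, because $F^\perp$ and $\operatorname{span}\{g_{j,l}\}^\perp$ are both infinite dimensional and separable. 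The adjoint estimate comes for free, since each $E_R(I)H$ reduces $R$, so $\|(R^*-\ol{\mu})g\|=\|(R-\mu)g\|$ there. A side benefit of your route is that it shows the hypothesis is also necessary. If $\lambda\in\T\setminus\sigma(R)$, every conjugate $V$ of $R$ satisfies $\|(V-\lambda)x\|\ge \operatorname{dist}(\lambda,\sigma(R))\|x\|$, so $\lambda I$ is not in the closure of the conjugacy class.

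A few clean-ups are worth making.
\begin{itemize}
\item In Step~1, the remark about enlarging $F_j$ to be infinite dimensional contradicts what you actually use later. Simply take $F_j=\operatorname{span}\{P_jx_1,\ldots,P_jx_m\}$. This contains each $P_jx_i$ exactly and makes $F=\bigoplus_jF_j$ invariant under $\sum_j\mu_jP_j$.
\item The case distinction in Step~3 is unnecessary, as is the unclear sentence about arcs meeting $\sigma(R)$ ``in infinitely many points or in a set carrying infinite-dimensional spectral subspaces''. Always split: choose $d=\dim F$ pairwise distinct points $\mu_{j,l}$ within $\eta$ of $\mu_j$, and use only the fact that $E_R(I)\neq0$ for every open arc $I$.
\item Make the order of choices explicit:
\begin{enumerate}
\item the arc length $\eta$ in Step~1;
\item then $F$ and $d$;
\item then the points $\mu_{j,l}$;
\item then $\delta$, below half the minimal separation of the $\mu_{j,l}$.
\end{enumerate}
You then get
\[
\|(S^{-1}RS-U)x_i\|\le(\delta\sqrt d+2\eta)\|x_i\|,
\]
and the same bound for the adjoints. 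So it suffices to arrange $(\delta\sqrt d+2\eta)\max_i\|x_i\|<\veps$.
\end{itemize}
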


The next result is a strengthening of \cite[Thm.~IV.3.11]{E-book}, where $\T$-rigidity was handled, cf.~\cite[Remark IV.3.16]{E-book}. Using it, we will show a more general statement in Theorem \ref{thm:generic-op} below.
\begin{thm}[{$\overline{\D}$-rigidity is typical}]\label{thm:D-rigid-residual}
Let $H$ be a separable Hilbert space. Then $\overline{\D}$-rigid operators on $H$ form a dense $G_\delta$ set in the spaces $(\mathcal{U}, \emph{\sote})$ and $(\mathcal{I}, \emph{\sot})$, and a comeager\footnote{The presented proof of the $G_\delta$-property does not work for $\mathcal{C}$ because,  in opposite to strong convergence, weak convergence of operators does not imply weak convergence of their powers.} set in the space $(\mathcal{C}, \emph{\wot})$.
\end{thm}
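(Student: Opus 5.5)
Fix $\lambda\in\overline{\D}$ and, for fixed $\lambda$, first show that $\lambda$-rigid operators form a dense $G_\delta$ set; then pass to all of $\overline{\D}$ by taking a countable dense set $\Lambda\subset\overline{\D}$ and intersecting. The intersection over $\Lambda$ suffices because, for fixed $T$, the set $\{\mu\in\C:\ \mu I\in\Tlim\}$ is closed. Indeed, $\Tlim$ is $\wot$-closed by Proposition~\ref{rem:prop-of-limits}(a), and $\wot$ is metrizable on bounded sets, so a diagonal argument applies.

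For the $G_\delta$ property in $(\U,\sote)$ and $(\I,\sot)$, fix a dense sequence $(x_j)$ in $H$ and the metric $d$ above. Then $T$ is $\lambda$-rigid if and only if
$$
T\in\bigcap_{m\ge1}\bigcap_{N\ge1}\bigcup_{n\ge N}\Big\{S:\ \sum_{j,l\le m}|\la (S^n-\lambda I)x_j,x_l\ra|<\tfrac1m\Big\}.
$$
Since $S\mapsto S^n$ is $\sot$-continuous, and $\sote$ is finer, each inner set is open, which gives the $G_\delta$ property. On $\mathcal{C}$ with $\wot$ this openness fails, as the footnote warns. There I will instead use the density of $\U$ and $\I$ in $(\mathcal{C},\wot)$ from \cite{EisnerSereny08}, and the fact, announced for Section~\ref{sec:genericasymptoticsforoperators}, that $\U$ is itself a dense $G_\delta$ in $(\mathcal{C},\wot)$, where $\wot$ restricted to $\U$ coincides with $\sote$. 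A dense $G_\delta$ subset of $\U$ is then comeager in $\mathcal{C}$. If that fact is not yet available, I will argue directly that $\I$ is a $G_\delta$ in $(\mathcal{C},\wot)$, via $\|Tx_j\|\ge\|x_j\|$ and lower semicontinuity of the norm under $\wot$.

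Density is the main obstacle. By Proposition~\ref{prop:conjugacy-lemma-op}, it suffices to exhibit one unitary $R$ with $\sigma(R)=\T$ that is $\lambda$-rigid for every $\lambda\in\overline{\D}$. Rigidity passes to conjugates, since $(S^{-1}RS)^{n}=S^{-1}R^nS$. So the conjugacy class of $R$ lies in our set and is dense in $\U$, and $\U$ is in turn $\sot$-dense in $\I$, by Wold decomposition and approximating the shift part by unitaries. For $R$ I would take multiplication by $z$ on $L^2(\T,\mu)$, where $\mu$ is a continuous probability measure with full support. Weak limits of $R^{n_k}$ are multiplication operators by weak$^*$ limits of $z^{n_k}$ in $L^\infty(\mu)$. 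So I need, for each $\lambda$, a sequence with $z^{n_k}\to\lambda$ weak$^*$ in $L^\infty(\mu)$, equivalently $\int z^{n_k}g\,d\mu\to\lambda\int g\,d\mu$ for all $g\in L^1(\mu)$.

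I would build $\mu$ as a Riesz-product-type measure on a Kronecker set, or as a convolution of a $\T$-rigid measure with one supported on an independent set. Alternatively, I would deduce the existence of $R$ from Corollary~\ref{cor:lambda-rigid-typical}. Take a $[0,1]$-rigid transformation $T$ whose $T_0$ has full spectrum, say a weakly mixing one. Then $T_0$ is $\lambda$-rigid for $\lambda\in[0,1]$. Since the rotated operators $e^{i\theta}T_0$ are again conjugate-dense, I would obtain $\lambda e^{i\theta}$-rigidity from $\lambda$-rigidity of $T_0$ combined with $\T$-rigidity of a suitable auxiliary factor. Concretely, I would take a tensor product $R=T_0\otimes W$, where $W$ is diagonal with independent rationally independent eigenvalues, and use a joint sequence along which $T_0^{n_k}\to\lambda I$ and $W^{n_k}\to e^{i\theta}I$. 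Arranging this joint convergence, which comes from the genericity of $T$ in the Stepin construction, is the step I expect to need the most care.
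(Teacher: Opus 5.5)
Most of your architecture is sound and matches the paper: the $G_\delta$ representation in $(\mathcal{U},\sote)$ and $(\mathcal{I},\sot)$, the reduction to countably many $\lambda$ via closedness of $\{\mu\in\C:\mu I\in\Tlim\}$, and the passage to $(\mathcal{C},\wot)$ through the dense $G_\delta$ property of $\mathcal{U}$. For $\lambda\in[0,1]$ your density argument also works: a weakly mixing $\lambda$-rigid $T$ has $\sigma(T_0)=\T$, so Proposition~\ref{prop:conjugacy-lemma-op} applies. The paper conjugates an irrational rotation instead, but the idea is the same.

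The genuine gap is density for $\lambda=re^{i\theta}\notin[0,1]$: you never produce a full-spectrum unitary that is $re^{i\theta}$-rigid.
\begin{itemize}
\item A measure on a single Kronecker set $E$ cannot serve as $\mu$. Kronecker sets contain no arcs, hence are nowhere dense, so $\sigma(M_z)=\operatorname{supp}\mu\neq\T$. Since $\{U\in\mathcal{U}:\sigma(U)\subset E\}$ is $\sote$-closed, such a conjugacy class is never dense.
\item The convolution variant is not specified at all.
\item In the tensor-product variant, rational independence of the eigenvalues of $W$ does not give joint convergence. Whether $w^{n_k}$ has a subsequence tending to $e^{i\theta}$ depends on the specific sequence along which $T_0^{n_k}\to rI$. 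For instance, $n_k=k!$ and $w=e^{2\pi i e}$ (not a root of unity) give $w^{n_k}\to 1$. Theorem~\ref{thm:stepin} gives you no control over that sequence, so appealing to Stepin's genericity does not supply it.
\end{itemize}

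The missing ingredient is metric equidistribution: for every subsequence $(n_k)$ of $\N$, the sequence $(n_k\psi)_k$ is equidistributed mod $1$ for a.e.\ $\psi\in[0,1]$ (Kuipers--Niederreiter \cite[Thm.~4.1]{KuipersNiederreiter}). The rotation must therefore be chosen \emph{after} the sequence. Given $T_0^{n_k}\to rI$ weakly, take such a $\psi$ and a subsequence $(n_k')$ with $e^{2\pi i n_k'\psi}\to e^{i\theta}$. Then $(e^{2\pi i\psi}T_0)^{n_k'}\to re^{i\theta}I$ weakly. Moreover $e^{2\pi i\psi}T_0$ still has spectrum $\T$, so your conjugacy argument applies. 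This is just your tensor product with a one-dimensional $W$, chosen depending on $(n_k)$; nothing more is needed.

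The paper uses exactly this device in a different order. It first approximates a given $U$ by an $r$-rigid unitary $T$, then passes to $e^{2\pi i\psi}T$ with $\psi$ close to $0$. This works precisely because the admissible $\psi$ form a set of full measure.
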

For a class of examples of operators with a much stronger property than $\overline{\D}$-rigidity, see Section \ref{sec:ex-op} below. 
\begin{proof}
We first show the assertion for $\mathcal{U}$ and $\mathcal{I}$. 
It clearly suffices to show the dense $G_\delta$ property of $\lambda$-rigid operators for a fixed $\lambda\in \D$ by taking $\lambda$'s with rational coordinates and intersecting the corresponding dense $G_\delta$ sets. 

The $G_\delta$-property in $\I$ and $\U$ follows from the representation of the set under consideration as 
$$
\bigcap_{N\in \N} \bigcup_{n\geq N} \left \{T:\  d(T^n,\lambda I) <\frac1N\right \},
$$
where $d$ denotes the metric inducing the $\wot$ topology on the space of contractions on $H$ (see Subsection \ref{introsubsecoperators}).

Since $\mathcal{U}$ is dense in $\mathcal{I}$ by \cite{E10}, it suffices to  show the density property in $\mathcal{U}$.  Consider first $\lambda\in [0,1]$. Since all separable Hilbert spaces are isomorphic, we can assume without loss of generality that $H=L_0^2[0,1]$.
Let $U\in\mathcal{U}$, $\veps>0$, $d\in\N$ and $x_1,\ldots,x_d\in H$. 
Denote by $R$ the Koopman operator of the translation on $[0,1]$ by an irrational number. By Proposition \ref{prop:conjugacy-lemma-op}, the conjugacy class of $R_0$ is dense in $\mathcal{U}$. In particular, there exists $S\in \mathcal{U}$ such that $\|S^{-1}R_0Sx_j-Ux_j\|<\veps$ for every $j\in\{1,\ldots,d\}$. Since $\lambda$-rigid transformations are dense in $\mathcal{T}$ by Theorem \ref{thm:stepin}, there exists a $\lambda$-rigid unitary operator $T$  on $H$ with the property
$\|(T-R_0)Sx_j\|<\veps$ for every $j\in\{1,\ldots,d\}$, implying that
\begin{align*}
    \|S^{-1}TSx_j-Ux_j\| &\leq \|S^{-1}(T-R_0)Sx_j\|+\|S^{-1}R_0Sx_j-Ux_j\| 
    \leq 2 \veps
\end{align*}
for every $j\in\{1,\ldots,d\}$.
%$$
%\|S^{-1}TSx_j-Ux_j\|\leq \|S^{-1}(T-R_0)Sx_j\|+\|S^{-1}R_0Sx_j-Ux_j\|\leq 2\veps \quad \forall j\in\{1,\ldots,d\}.
%$$
Since the operator $S^{-1}TS$ is $\lambda$-rigid as well, the density of $\lambda$-rigid operators in $\mathcal{U}$ follows. 

For an arbitrary $\lambda\in \overline{\D}$ write $\lambda=re^{2\pi i \varphi}$ with $r,\varphi\in[0,1]$. Let $U$ be a unitary operator on $H$. By the above, for every strong neighbourhood $\mathcal{V}$ of $U$ there exists a unitary operator $T\in\mathcal{V}$ and a subsequence $(n_k)$ of $\N$ such that $\displaystyle \lim_{k\to\infty}T^{n_k}=rI$ in the weak operator topology. By Kuipers, Niederreiter \cite[Thm. 4.1]{KuipersNiederreiter}, for a.e.~$\psi\in [0,1]$  the sequence $(e^{2\pi i n_k\psi})_{k \geq 1}$ is equidistributed in $\T$. In particular, there exists a subsequence $(n'_k)$ of $(n_k)$ such that $\displaystyle \lim_{k\to\infty}e^{2\pi i n'_k\psi}=e^{2\pi i\varphi}$. For such $\psi$ consider the unitary operator $e^{2\pi i \psi}U$. We have 
$$
\lim_{k\to\infty}(e^{2\pi i \psi}T)^{n'_k}=\lim_{k\to\infty}e^{2\pi i n'_k \psi}T^{n'_k}=e^{2\pi i\varphi}rI=\lambda I
$$
in the weak operator topology, i.e., for each such $\psi$ the operator $e^{2\pi i \psi}T$ is $\lambda$-rigid. Since this holds for a.e.~$\psi$, there are such $\psi$ so close to $0$ that $e^{2\pi i \psi}T\in \mathcal{V}$. The proof of the density statement for $\mathcal{U}$ is complete. 

Since $\mathcal{U}$ is comeager in $\mathcal{C}$ by \cite{E10} (see also Theorem \ref{thm:unitary-G-delta-in-I-C}) and the $\wot$ and $\sote$ topologies coincide on $\mathcal{U}$, every set which is comeager in $\mathcal{U}$ is also comeager in $\mathcal{C}$. This shows the assertion for  $\mathcal{C}$.
\end{proof}

\subsection{Fixed asymptotics and $\lambda$-rigidity: positive operators} 
We now focus on positive operators on $L^p[0,1]$ with $1 < p < \infty$. We have the following analogue of Proposition \ref{prop-multiple-identity-op} in this context.

\begin{proposition} \label{fixedasymptoticposoperators}
    Let $1 < p < \infty$ and let $V$ be a positive operator on $L^p[0,1]$. If the set of positive operators of $L^p[0,1]$ satisfying $V\in\Tlim$ is dense in one of the spaces $(\Cp, \emph{\wot})$, $(\I_p, \emph{\sot})$ and $(\Up, \emph{\sot})$, then $V = \lambda I$ for some $\lambda \in [0,1]$.
\end{proposition}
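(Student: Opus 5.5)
We follow the scheme of Propositions \ref{thm:lambdaI} and \ref{prop-multiple-identity-op}. The argument has three steps: $V$ commutes with every operator in the space, this commutation forces $V=\lambda I$ with $\lambda\in\C$, and positivity together with contractivity forces $\lambda\in[0,1]$.

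\emph{Step 1: commutation.} Let $\mathcal{M}$ be the set of positive operators $T$ in the given space with $V\in\Tlim$. For $T\in\mathcal{M}$ we have $TV=VT$ by Proposition \ref{rem:prop-of-limits}(a). We now pass to the closure. In $(\I_p,\sot)$ and $(\Up,\sot)$ multiplication is jointly $\sot$-continuous on bounded sets, so $T\mapsto TV-VT$ is continuous there. In $(\Cp,\wot)$ both maps $T\mapsto TV$ and $T\mapsto VT$ are $\wot$-continuous, since multiplication is separately continuous. In each case density of $\mathcal{M}$ gives $TV=VT$ for every $T$ in the whole space. Because $V$ is a weak limit of contractions, it is itself a positive contraction.

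\emph{Step 2: $V$ is scalar.} All Koopman operators of invertible measure-preserving transformations of $[0,1]$ lie in $\Up\subset\I_p\subset\Cp$, so $V$ commutes with all of them. We redo the argument of Proposition \ref{thm:lambdaI} on the whole of $L^p$. Let $R$ be an irrational rotation. Each $f_k$ ($k\in\Z$) is an eigenfunction of $R$ with a one-dimensional eigenspace, so Lemma \ref{lem:comm} gives $Vf_k=\lambda_kf_k$; this includes $k=0$, where $V\Eins=\lambda_0\Eins$. Since trigonometric polynomials are dense in $L^p$ and $V$ is bounded, it suffices to show that all $\lambda_k$ are equal. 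Take the transformation $S$ from the proof of Proposition \ref{thm:lambdaI}. For $k=1,2$, expand $Sf_k$ in Fourier coefficients: the identity $VSf_k=\lambda_kSf_k$ implies that the Fourier coefficients of $VSf_k$ equal $\lambda_j c_j$, where $c_j=\langle Sf_k,f_j\rangle$. This holds because $V^*$ commutes with $R^*$, so $V^*f_j=\overline{\lambda_j}' f_j$ for the same constants (pair with $f_j\in L^{p'}$). This yields $\lambda_j=\lambda_k$ for all $j\in M_k$, with $M_1,M_2$ as computed there. So all $\lambda_j$ with $j\neq0$ are equal. To include $j=0$, note that $\langle Sf_1,f_0\rangle=\int Sf_1=\int f_1=0$, so we need a different $S'$ with $\int S'f$ nonzero. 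Simplest is to pick any invertible measure-preserving $S'$ and a set $A$ with $m(A)=1/2$ such that $S'\indic_A$ is not constant on the relevant Fourier modes. Alternatively, use positivity: $V\Eins=\lambda_0\Eins$, and since $V$ commutes with every Koopman operator, $V\indic_A$ and $V\indic_{A^c}$ are nonnegative and sum to $\lambda_0$. Apply $V$ to $\indic_{[0,1/2)}=\tfrac12\Eins+g$ with $g\in L^2_0$, so $V\indic_{[0,1/2)}=\tfrac{\lambda_0}2+\lambda g$. Commuting with the measure-preserving swap of the two halves shows this function is supported in $[0,1/2)$ only if $\lambda_0=\lambda$. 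Indeed, $V$ commutes with the multiplication-free band structure via the Koopman operator of a transformation fixing $[0,1/2)$ and ergodic on $[1/2,1]$, which forces $V\indic_{[0,1/2)}$ to be constant on $[1/2,1]$, giving $\tfrac{\lambda_0}{2}-\tfrac{\lambda}{2}$ there. Checking that this constant must vanish is the main obstacle. My first attempt would be to use commutation of $V$ with $S'$ mixing $[0,1/2)$ and $[1/2,1]$ on proper subsets, equating the two representations as in Proposition \ref{thm:lambdaI}, treating the constant mode as one more Fourier index.

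\emph{Step 3: range of $\lambda$.} Once $V=\lambda I$, positivity of $V$ gives $\lambda\ge0$, since $V\Eins=\lambda\Eins\ge0$. Contractivity gives $\lambda\le1$. Hence $\lambda\in[0,1]$.
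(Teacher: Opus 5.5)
Your Steps 1 and 3 are fine. In Step 2, the duality argument via $V^*$ on $L^{p'}$ correctly gives $Vf_j=\lambda f_j$ for all $j\neq 0$. Since mean-zero trigonometric polynomials are dense, this means $V=\lambda I$ on $L^p_0[0,1]$, and it also gives $V\Eins=\lambda_0\Eins$.

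The gap is exactly where you put it, at $\lambda_0=\lambda$. It cannot be closed with the tools you allow yourself. Every Koopman operator $S'$ of an invertible measure-preserving transformation satisfies $S'\Eins=\Eins$ and $\int S'f\,dm=\int f\,dm$, so it leaves both $\C\Eins$ and $L^p_0[0,1]$ invariant. In your bookkeeping this means $\langle S'f_k,f_0\rangle=0$ for every $k\neq 0$, however $S'$ mixes $[0,1/2)$ and $[1/2,1]$. So treating the constant mode as one more Fourier index never produces a relation involving $\lambda_0$. In fact no such argument can work. For $\lambda\in[0,1)$ the operator $f\mapsto \lambda f+(1-\lambda)(\int f\,dm)\Eins$ is a positive contraction on $L^p[0,1]$ that commutes with all these Koopman operators, yet it is not scalar. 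This is why Proposition \ref{thm:lambdaI} is stated on $L^2_0$, and why $\lambda I+(1-\lambda)\Pconst$ really occurs for generic transformations (cf.\ Theorem \ref{thm:stepin}). Positivity only yields $\lambda_0\geq\lambda$, via $V\indic_{[0,1/2)}\geq 0$. Nothing you have established forces $V\indic_{[0,1/2)}$ to vanish on $[1/2,1]$; that would need commutation with multiplication operators.

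The missing idea is to use positive operators in the space that do not preserve the integral. The paper first reduces all three cases to $(\Cp,\wot)$ using Theorem \ref{thWOTdenseposisometriesbijgralew}: $\Up$ is \wot-dense in $\Cp$, so \sot-density of your set in $\Up$ or $\Ip$ gives \wot-density in $\Cp$. It then uses that $V$ commutes with the rank-one positive operators $f\mapsto(\int f\,dm)\,e$ for $0\neq e\geq 0$. Applying both products to $e$ gives $Ve=\lambda_e e$ for every such $e$, and linear independence forces a common $\lambda$, with no Fourier analysis at all.

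You can also repair your own route without that density theorem. $V$ commutes with all of $\Up$, in particular with $T=T^{(p)}_\tau$ for the non-singular, non-measure-preserving map $\tau(x)=x^2$, i.e.\ $Tf(x)=(2x)^{1/p}f(x^2)$. Write $T\Eins=c\Eins+h$ with $c=\int T\Eins\,dm$; then $h\in L^p_0[0,1]$ and $h\neq 0$, since $T\Eins$ is non-constant. You get $\lambda_0(c\Eins+h)=TV\Eins=VT\Eins=\lambda_0c\Eins+\lambda h$, hence $\lambda_0=\lambda$, and your Step 3 finishes the proof.
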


\begin{proof}
    Since $\I_p$ and $\U_p$ are $\wot$-dense in $(\Cp, \wot)$ (see Theorem \ref{thWOTdenseposisometriesbijgralew} below), we just have to prove Proposition \ref{fixedasymptoticposoperators} for positive contractions. If the corresponding set is dense in $\Cp$, then by rescaling $V$ must commute with every positive operator of $L^p[0,1]$. In particular, $V$ commutes with every positive operators $T : f \mapsto \int_{[0,1]} f \, dm \cdot e$ for $e \geq 0$ and $e \ne 0$. It implies that for every $e \geq 0$ with $e \ne 0$, there exists $\lambda_e \geq 0$ such that $V e = \lambda_e \, e$. Indeed, we can take $\lambda_e = \int_{[0,1]} Ve \, dm/ \int_{[0,1]} e \, dm$ using the fact that $\int_{[0,1]} e \, dm >0$ since $e$ is positive and non-zero. It is easy to see by considering linearly independent functions that there exists $\lambda \geq 0$ such that $V e = \lambda \, e$ for every $e \geq 0$. 
    This last relation easily extends to every $e \in L^p[0,1]$ by considering the real and imaginary parts of $e$, as well as their positive and negative parts. This concludes the proof of Proposition \ref{fixedasymptoticposoperators}.
\end{proof}

We will also see in Theorem \ref{typweaklimsemigpconvexposcontractionsWOT} that the converse of Proposition \ref{fixedasymptoticposoperators} holds.

\section{A class of examples: operators}\label{sec:ex-op}

The following beautiful example inspired by the construction of Bayart and Matheron \cite[Section 3.4]{BayartMatheron06} was communicated to us by Sophie Grivaux. Recall that a compact subset $E$ of $\T$ is called a \emph{Kronecker set} if the characters on $\T$ are uniformly dense in the set of all $\T$-valued continuous functions on $E$. For a construction of a perfect\footnote{Perfect sets always support continuous probability measures.} % since they contain a continuous image of a Cantor set.}}
Kronecker set see, e.g., \cite[Chapter 5]{Rudin-book}.
\begin{prop} \label{prop:ex-Sophie}
For every continuous probability measure $\mu$ on $\T$ supported on a Kronecker set $E$, the multiplication operator $M_z$ on $L^2(\T,\mu)$ given by $(M_zf)(z):=zf(z)$ satisfies the following property: the weak limit semigroup of $M_z$ coincides with the set of all contractions on $L^2(\T,\mu)$ commuting with $M_z$, i.e., with the set of all multiplicators on $L^2(\T,\mu)$ by a function with $L^\infty$-norm  $\leq 1$.
\end{prop}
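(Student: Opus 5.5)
We will prove two inclusions, the easy one first. By Proposition \ref{rem:prop-of-limits}(a), every element of the weak limit semigroup of $M_z$ commutes with $M_z$. The commutant of $M_z$ on $L^2(\T,\mu)$ is the set of multiplication operators $M_\phi$ with $\phi\in L^\infty(\mu)$, since $M_z$ generates a maximal abelian von Neumann algebra: $z$ and $\bar z$ generate a dense subalgebra of $C(\T)$, hence a weak$^*$-dense subalgebra of $L^\infty(\mu)$. The norm of $M_\phi$ equals $\|\phi\|_\infty$. Hence every weak limit operator is $M_\phi$ with $\|\phi\|_\infty\le 1$.

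For the reverse inclusion, fix $\phi\in L^\infty(\mu)$ with $\|\phi\|_\infty\le1$. We must find a subsequence $(n_k)$ with $z^{n_k}\to\phi$ in the weak$^*$ topology $\sigma(L^\infty,L^1)$. This suffices because $\la M_{z^{n}}f,g\ra=\int z^n f\bar g\,d\mu$ and $f\bar g$ ranges over $L^1(\mu)$. Since bounded sets of operators are \wot-metrizable (the space is separable), it is enough to show that $\phi$ lies in the weak$^*$ closure of $\{z^n: n\ge N\}$ for every $N$; a diagonal argument then gives a strictly increasing sequence.

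We approximate in stages.

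First, the functions $\phi$ that are $\T$-valued and continuous on $E$ are dense in the closed unit ball of $L^\infty(\mu)$ for the weak$^*$ topology. To see this, let $\psi$ be continuous on $E$ with $|\psi|\le 1$; continuous functions are weak$^*$-dense in the unit ball by Lusin's theorem and truncation. Write $\psi=\frac12(u_1+u_2)$ with $u_1,u_2$ continuous and $\T$-valued, namely $u_{1,2}=\psi\pm i\psi\sqrt{1/|\psi|^2-1}$, smoothed where $\psi=0$. Then realize the average $\frac12(u_1+u_2)$ as a weak$^*$ limit of $\T$-valued functions that oscillate rapidly between $u_1$ and $u_2$. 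Such functions are $\T$-valued but not continuous, so we need an extra step: choose $w_j$ continuous and $\T$-valued, equal to $u_1$ on a fine family of intervals and to $u_2$ on the alternate ones, except for short arcs joining them. Because $\mu$ is continuous, the transition sets have small measure. A Riemann–Lebesgue-type argument, letting the partition mesh tend to $0$, gives $w_j\to\psi$ weak$^*$.

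Second, for $\phi$ continuous and $\T$-valued on $E$, the Kronecker property gives characters $z^{m_j}$, $m_j\in\Z$, with $\sup_E|z^{m_j}-\phi|\to0$, and hence $z^{m_j}\to\phi$ uniformly and in particular weak$^*$. We still need $m_j\ge N$. Apply the Kronecker property to $\phi\cdot\bar z^{M}$ with $M$ large. Alternatively, and more simply, note that $E$ is perfect and infinite, so the exponents achieving accuracy $\varepsilon$ cannot be bounded. Indeed, if only finitely many integers $m$ satisfied $\sup_E|z^m-1|<\varepsilon$ for all $\varepsilon$, then $z^m\equiv1$ on $E$ for some $m\neq 0$, which is impossible for infinite $E$. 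Hence $1$ is uniformly approximated by $z^{m}$ with $|m|$ arbitrarily large. Replacing $m$ by $-m$ if needed (take complex conjugates), we may assume $m$ is large and positive. Multiplying by a fixed approximant of $\phi$ then shifts the exponent to be $\ge N$.

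Combining the two stages, every $\phi$ with $\|\phi\|_\infty\le 1$ lies in the weak$^*$ closure of $\{z^n:n\ge N\}$ for each $N$, which gives the result. The main obstacle is the first stage, the density of continuous unimodular functions on $E$ in the unit ball. I would carry out the oscillation construction explicitly by partitioning $E$ into small pieces of small $\mu$-measure diameter. If that becomes messy, I would instead use that the weak$^*$ closure is convex, arguing as in Proposition \ref{rem:prop-of-limits}(d): show $\frac12(I+M_z^n)$ and similar averages are limits, and use that the extreme points of the unit ball of $L^\infty$ are the unimodular functions.
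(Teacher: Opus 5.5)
\textbf{Overall.} Your architecture is the paper's. You identify the commutant with $\{M_\phi:\|\phi\|_\infty\le 1\}$, reduce to weak$^*$ approximation of $\phi$ by characters, pass through unimodular functions, and finish with the Kronecker property. The commutant step and the metrizability/diagonal reduction are fine. The problems lie in the two approximation stages.

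\textbf{First stage: mass splitting is missing.} This stage is exactly where the continuity of $\mu$ has to be used. A Kronecker set is Lebesgue-null, since Riemann--Lebesgue is incompatible with $z^{n}\to\indic$ uniformly on $E$ along $|n|\to\infty$. So $\mu$ is singular, and no Riemann--Lebesgue-type argument is available. A fixed pattern of fine alternating arcs need not split $\mu$ evenly: for the Cantor measure, every triadic arc of length $3^{-k}$ that carries mass has even index. Hence your $w_j$ need not converge to $\frac12(u_1+u_2)$. In your plan, continuity of $\mu$ only makes the transition sets small. What is actually needed is \emph{mass splitting}: cut $E$ into pieces on which the data are nearly constant, and inside each piece choose a sub-piece carrying approximately a prescribed fraction of its $\mu$-mass. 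This uses that $t\mapsto\mu([a,t))$ is continuous, and it is precisely the role of Lyapunov's theorem in Lemma~\ref{LemmepreuveexSophie}. Your convexity fallback does not avoid this, because proving $\frac12(I+M_z^n)\in\mathcal{L}_{M_z}$ is the same problem.

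\textbf{First stage: continuity of the approximants.} This is also not secured. Writing $\psi=\frac12(u_1+u_2)$ with continuous unimodular $u_i$ fails in general near zeros of $\psi$, because the phase $\psi/|\psi|$ has no continuous extension there, and ``smoothing'' needs an argument. What makes everything work is that $E$, being null, contains no arc and is therefore totally disconnected. Put all cut points in the dense set $\T\setminus E$; the distribution function still takes a dense set of values there. Then every piece is relatively clopen in $E$, so:
\begin{itemize}
\item locally constant unimodular functions are continuous on $E$;
\item no transition arcs are needed;
\item on each piece, a constant $c$ with $|c|\le 1$ is the average of two unimodular constants.
\end{itemize}
Your insistence on \emph{continuous} unimodular approximants is well placed. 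The paper applies the Kronecker property to the Borel function $h$ from its lemma, and strictly speaking that $h$ must be made continuous on $E$ in the same way.

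\textbf{Second stage: large positive exponents.} Your argument here is vacuous as written. Since $m=0$ satisfies $\sup_E|z^m-1|<\veps$ for every $\veps$, the finiteness argument yields no $m\neq 0$. Applying the Kronecker property to $\phi\bar z^{M}$ gives $z^{a+M}\approx\phi$, but nothing prevents $a=m_0-M$, i.e.\ $a+M=m_0$. Here is a correct argument. Restrictions to the infinite set $E$ of distinct characters are distinct. They are uniformly dense in $C(E,\T)$, which has no isolated points. Hence every uniform ball around $\indic$ contains $z^m|_E$ for infinitely many $m$. Conjugating if necessary gives such $m\to+\infty$. Your shifting trick then concludes, as does Proposition~\ref{rem:prop-of-limits}(e) in the way used in Remark~\ref{rem:Kronecker-sets}(a).
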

\begin{remark}\label{rem:Kronecker-sets}
\begin{itemize}
    \item[(a)] 
Since $E$ is a Kronecker set, the function $\indic$ is a uniform limit on $E$ of a sequence $z^{n_k}$ with $n_k \in \Z$ and $\lvert n_k \rvert \to \infty$. Since the function $\indic$ is real valued, one has that $\lVert \indic - z^{n_k} \rVert_\infty = \lVert \indic - z^{-n_k} \rVert_\infty$, so we may assume that some subsequence of $(n_k)$ tends to $\infty$. In particular, it implies that the weak closure of the powers $M_z^n$ with $n \in \Z$ coincides with the weak limit semigroup of $M_z$ by Proposition \ref{rem:prop-of-limits}(e).
\item[(b)]
The operators from Proposition \ref{prop:ex-Sophie} are in particular $\overline{\D}$-rigid.
\end{itemize}
\end{remark}

The inclusion ``$\subset$'' in Proposition \ref{prop:ex-Sophie} is clear. For the converse inclusion, we will need the following lemma. This lemma is the analog of Bayart, Matheron \cite[Claim 3.8]{BayartMatheron06} and relies on Lyapunov's convexity theorem, which states that for every non-atomic probability space $(\Omega, \mu)$ and for every measurable set $A \subset \Omega$, the range of $\mu_{|A}$ is the whole interval $[0, \mu(A)]$ (see, for instance, \cite{Lyapunov40}). 

\begin{lemma} \label{LemmepreuveexSophie}
Let $f_1, \dotsc, f_r, g_1, \dotsc, g_r \in L^\infty(\T,\mu)$, let $f \in L^\infty(\T,\mu)$ such that $\lVert f \rVert_\infty \leq 1$ and let $\gamma > 0$. Then there exists a function $h \in L^\infty(\T,\mu) $ supported on $E$ such that $\lvert h \rvert = 1$ on $E$ and
\begin{align*}
    \left \lvert \int_E f f_i \overline{g_i} d\mu -  \int_E h f_i \overline{g_i} d\mu  \right \rvert < \gamma, \; \forall 1 \leq i \leq r.
\end{align*}
\end{lemma}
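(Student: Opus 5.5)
The idea is to approximate $f$ by a simple function and then replace each value of modulus $\le 1$ by a two-valued unimodular function whose "average" reproduces it. This is where Lyapunov's convexity theorem enters, since $\mu$ is continuous.

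\emph{Reduction to a simple function.} The functions $\phi_i:=f_i\overline{g_i}$ lie in $L^\infty(\T,\mu)\subset L^1(\T,\mu)$, with $M:=\max_i\lVert\phi_i\rVert_1<\infty$. I first choose a simple function $s=\sum_{j=1}^m c_j\indic_{A_j}$, where the $A_j\subset E$ are disjoint Borel sets partitioning $E$ up to a null set, $\lvert c_j\rvert\le 1$ and $\lVert f-s\rVert_\infty<\gamma/(3M+1)$. To do this I quantize the values of $f$ on a fine grid of the closed unit disc, keeping the values in $\overline{\D}$. Then $\bigl\lvert\int_E (f-s)\phi_i\,d\mu\bigr\rvert<\gamma/3$ for all $i$. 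It therefore suffices to treat a single set $A=A_j$ and a constant $c\in\overline{\D}$, and to produce a unimodular $h_A$ on $A$ with $\bigl\lvert\int_A (c-h_A)\phi_i\,d\mu\bigr\rvert<\gamma/(3m)$ for all $i$. The function $h:=\sum_j h_{A_j}$ then works.

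\emph{Writing $c$ as an average of unimodular numbers.} Every $c\in\overline{\D}$ can be written as $c=\tfrac12(\alpha+\beta)$ with $\alpha,\beta\in\T$; indeed any chord of the circle through $c$ whose midpoint is $c$ works. So it suffices to split $A$ into two sets $B,A\setminus B$ such that $\int_B\phi_i\,d\mu\approx\tfrac12\int_A\phi_i\,d\mu$ for every $i$ simultaneously, and to set $h_A=\alpha\indic_B+\beta\indic_{A\setminus B}$. The simultaneous halving is exactly where Lyapunov is needed. The scalar version quoted in the paper handles only one measure, so I would use the vector form: the range of the vector measure $\nu(B)=\bigl(\int_B\phi_i\,d\mu\bigr)_{i=1}^r\in\C^r\cong\R^{2r}$, restricted to subsets of $A$, is convex. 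Its values $\nu(\emptyset)=0$ and $\nu(A)$ are in the range, so $\tfrac12\nu(A)$ is attained exactly by some $B\subset A$. This gives $\int_A h_A\phi_i\,d\mu=\tfrac{\alpha+\beta}{2}\int_A\phi_i\,d\mu=c\int_A\phi_i\,d\mu$ exactly.

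\emph{Main obstacle and fallback.} The main obstacle is obtaining the simultaneous halving for all $r$ functions. If one insists on using only the scalar range statement, I would instead refine the partition. First I approximate each $\phi_i$ in $L^1$ by a simple function, so that on each piece all $\phi_i$ are nearly constant, up to an error whose total contribution is below the remaining share of $\gamma$. Then on each piece I use the scalar Lyapunov theorem for $\mu$ alone to pick $B$ with $\mu(B)=\tfrac12\mu(\text{piece})$. The resulting errors are controlled by the $L^1$ approximation together with $\lvert h\rvert,\lvert c\rvert\le 1$. Finally I set $h=0$ off $E$, so that $h$ is supported on $E$ and unimodular there. Summing the three error contributions gives the bound $<\gamma$.
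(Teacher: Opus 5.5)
Your proof is correct, but its main line takes a different route from the paper's.

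\textbf{Your route.} You approximate $f$ uniformly by a simple function $s$ with values in $\overline{\D}$. On each level set $A_j$ you then reproduce $\int_{A_j} s\,\phi_i\,d\mu$ \emph{exactly} for all $i$ at once. You do this by writing $c_j=\tfrac12(\alpha_j+\beta_j)$ with $\alpha_j,\beta_j\in\T$ and halving the $\R^{2r}$-valued measure $B\mapsto(\int_B\phi_i\,d\mu)_{i}$ on $A_j$ via the finite-dimensional Lyapunov theorem. This step is legitimate: each $\phi_i\,d\mu$ is absolutely continuous with respect to the continuous measure $\mu$, hence non-atomic.

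\textbf{The paper's route.} The paper never approximates $f$. It instead approximates the test functions $\phi_i=f_i\overline{g_i}$ in $L^1$ by simple functions on a common partition $(A_1,\dots,A_d)$ of $E$. It then only needs a unimodular $h$ with $\int_{A_s}h\,d\mu=\int_{A_s}f\,d\mu$ on each cell. Writing $\int_{A_s}f\,d\mu=K_s\delta_s$ with $\lvert K_s\rvert=1$ and $0\le\delta_s\le\mu(A_s)$, it sets $h=K_s$ on a subset of measure $\delta_s$. On the remainder it puts $h=1$ and $h=-1$ on two parts of equal measure. So it uses only the scalar fact that $\mu|_A$ takes every value in $[0,\mu(A)]$, which is exactly the version of Lyapunov it quotes.

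\textbf{Trade-off.} The paper's argument is more elementary. Yours gives an exact identity against the $\phi_i$ without approximating them, at the price of invoking the deeper vector Lyapunov theorem.

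\textbf{Your fallback.} This is essentially the paper's proof, with your two-point averaging on halves replacing the $K_s$/$\pm1$ device. In that version the preliminary approximation of $f$ by $s$ is superfluous. On each piece $P$ you can take $c_P:=\mu(P)^{-1}\int_P f\,d\mu\in\overline{\D}$ directly.
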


We start by proving Proposition \ref{prop:ex-Sophie} assuming Lemma \ref{LemmepreuveexSophie}.
\begin{proof}[{Proof of Proposition \ref{prop:ex-Sophie}}] 
Let $V$ be a contraction on $L^2(\T,\mu)$ commuting with $U := M_z$. Then there exists a function $f \in L^\infty(\T,\mu)$ such that $V = M_f$ and $\lVert f \rVert_\infty \leq 1$, where $M_f$ is the multiplication operator by the function $f$. Indeed, if $V$ is a contraction on $L^2(\T,\mu)$ commuting with $M_z$, then $V^*$ also commutes with $M_z$ since $M_z$ is unitary, and thus $V$ commutes with $\varphi(M_z) = M_\varphi$ for every bounded Borel function $\varphi$ on $L^\infty(\T,\mu)$ by the Borel functional calculus. Thus for every Borel subset $B$ of $E$ and for every function $g \in L^2(\T,\mu)$, we have $V(\indic_B \cdot g) = \indic_B \cdot Vg$, implying that there exists a function $f \in L^\infty(\T, \mu)$ with $\lVert f \rVert_\infty \leq 1$ such that $V = M_f$ (see Nadkarni \cite[proof of Lemma 1.25]{Nadkarni-book}). 

We will show that $V$ belongs to the weak closure of the powers $U^n$ with $n \in \Z$, i.e, that for every $\varepsilon > 0$ and for every $f_1, \dotsc, f_r, g_1, \dotsc, g_r \in L^2(\T,\mu)$, there exists an integer $n \in \Z$ such that
\begin{align} \label{equation1preuvesophieex}
    \lvert \langle f f_i, g_i \rangle - \langle U^n f_i, g_i \rangle \rvert < \varepsilon, \; \forall 1 \leq  i \leq r.
\end{align}
Let $\varepsilon > 0$ and let $f_1, \dotsc, f_r, g_1, \dotsc, g_r \in L^2(\T,\mu)$. Since $L^\infty(\T,\mu)$ is dense in $L^2(\T,\mu)$, we can suppose that all the $f_i$ and the $g_i$ belong to $L^\infty(\T,\mu)$. Let us notice that (\ref{equation1preuvesophieex}) corresponds to
\begin{align*}
    \left \lvert \int_E f(z) f_i(z) \overline{g_i(z)} d\mu(z) - \int_E z^n f_i(z) \overline{g_i(z)} d\mu(z) \right \rvert < \varepsilon, \; \forall 1 \leq i \leq r.
\end{align*}
%Let $0 <\rho < 1 $ such that
%\begin{align}\label{equation2preuvesophieex}
%    \lvert \langle \rho f_i f, g_i \rangle - \langle f f_i, g_i \rangle \rvert < %\varepsilon/2, \; \forall 1 \leq i \leq r.
%\end{align}
%If we find an integer $n \in \Z$ such that
%\begin{align} \label{equation3preuvesophieex}
%    \lvert \langle \rho f f_i, g_i \rangle - \langle z^n f_i, g_i \rangle \rvert %< \varepsilon/2, \; \forall 1 \leq i \leq r,
%\end{align}
%then (\ref{equation1preuvesophieex}) will hold. 
%Since $\lVert \rho f \rVert_\infty = \rho < 1$, we can suppose that $\lVert f \rVert_\infty < 1$ and it is enough to show that (\ref{equation3preuvesophieex}) holds for $f$ instead of $\rho f$. 

Let $h \in L^\infty(\T,\mu)$ be given by Lemma \ref{LemmepreuveexSophie} for $\gamma = \varepsilon/2$ and for the functions $f$, $f_i$ and $g_i$, and let $\delta > 0$ be such that $\delta \displaystyle \max_{1 \leq i \leq r} \lVert f_i g_i \rVert_\infty < \varepsilon/2 $. Since $E$ is a Kronecker set, there exists an integer $n \in \Z$ such that $\lVert z^n - h \rVert_\infty < \delta$. Thus, for every $1 \leq i \leq r$,
\begin{align*}
    \left \lvert \int_E z^n f_i \overline{g_i} d\mu - \int_E h f_i \overline{g_i} d\mu \right \rvert < \delta \lVert f_i g_i \rVert_\infty < \varepsilon/2.
\end{align*}
%since $f_i, g_i \in L^\infty(\T,\mu)$. 
We thus obtain that 
\begin{align*}
    \left \lvert \int_E z^n f_i \overline{g_i} d\mu - \int_E f f_i \overline{g_i} d\mu \right \rvert < \varepsilon
\end{align*}
for every $1 \leq i \leq r$, implying (\ref{equation1preuvesophieex}). This together with Remark \ref{rem:Kronecker-sets}(a) concludes the proof of Proposition \ref{prop:ex-Sophie}.
\end{proof}

We now prove Lemma \ref{LemmepreuveexSophie}.
\begin{proof}[{Proof of Lemma \ref{LemmepreuveexSophie}}]
Let $h_i := f_i \overline{g_i}$ for every $1 \leq i \leq r$. There exists a partition $(A_1, \dotsc, A_d)$ of $E$ in Borel subsets with $\mu(A_s) > 0$ for every $1 \leq s \leq d$ and sequences of complex numbers $(c_1^i, \dotsc, c_d^i)$ such that 
\begin{align}
    \lVert h_i - \displaystyle \sum_{s=1}^d c_s^i \indic_{A_s} \lVert_1 < \gamma/2, \; \forall 1 \leq i \leq r. 
\end{align}
If we manage to construct a function $h \in L^\infty(\T,\mu)$ supported on $E$ with $\lvert h \rvert = 1$ on $E$ such that $\int_{A_s} f d\mu = \int_{A_s} h d\mu$ for every $1 \leq s \leq d$, then we will obtain that
\begin{align} \label{ineq1preuveexsophie}
    \left \lvert \int_E (f-h) h_i d\mu \right \rvert \leq &\left \lvert \int_E (f-h) \displaystyle \sum_{s=1}^d c_s^i \indic_{A_s} d\mu \right \rvert \\ \notag
    + &\left \lvert \int_E (f-h) (h_i - \displaystyle \sum_{s=1}^d c_s^i \indic_{A_s}) d\mu \right \rvert 
\end{align}
for every $1 \leq i \leq r$. For such a function $h$, the second term on the right hand side of (\ref{ineq1preuveexsophie}) is bounded from above by $\gamma \lVert f-h \rVert_\infty/2$, and thus by $\gamma$. Moreover, the first term in the right hand side of (\ref{ineq1preuveexsophie}) is equal to $\left \lvert \displaystyle \sum_{s=1}^{d} c_s^i \int_{A_s} (f-h) d\mu \right \rvert = 0$ by the definition of the function $h$, and thus we will obtain that $\left \lvert \int_E f h_i d\mu - \int_E h h_i d\mu \right \rvert < \gamma$ for every $1 \leq i \leq r$.

In order to construct the function $h$, we set $\alpha_s := \int_{A_s} f d\mu$. Then $\displaystyle \max_{1 \leq s \leq d} \frac{\lvert \alpha _s \rvert}{\mu(A_s)} \leq \lVert f \rVert_\infty \leq 1$. Thus, there exist $\delta_1, \dotsc, \delta_d > 0$ and $K_1, \dotsc, K_d$ such that $\lvert K_s \rvert = 1$, $0 \leq \delta_s \leq \mu(A_s)$ and $\alpha_s = K_s \delta_s$ for every $1 \leq s \leq d$. Since $\mu$ is a continuous probability measure, Lyapunov's convexity theorem implies that each set $A_s$ can be written as a disjoint union $A_s = \Tilde{A_s} \cup B_s^+ \cup B_s^-$ of Borel subsets with $\mu(\Tilde{A_s}) = \delta_s$ and $\mu(B_s^+) = \mu(B_s^-)$ for every $1 \leq s \leq d$. Indeed, we first choose $\Tilde{A_i} \subset A_i$ such that $\mu(\Tilde{A_i}) = \delta_i$, and then $B_i^+ \subset A_i \setminus \Tilde{A_i}$ such that $\mu(B_i^+) = \frac{1}{2}(\mu(A_i) - \delta_i)$.
We define the function $h$ on $E$ by $h = K_s$ on $\Tilde{A_s}$, $h = 1$ on $B_s^+$ and $h = -1$ on $B_s^-$ for every $1 \leq s \leq d$. Thus $\lvert h \rvert = 1$ on $E$, and $\int_{A_s} h d\mu = K_s \mu(\Tilde{A_s}) + \mu(B_s^+) - \mu(B_s^-) = K_s \delta_s = \alpha_s = \int_{A_s} f d\mu$ for every $1 \leq s \leq d$, as desired.
\end{proof}

\section{A class of examples: transformations}\label{sec:ex-transf}

We need the following modification of the above construction. % with a symmetric measure. % of the above proposition to obtain a symmetric measure. 

\begin{cor}%[Grivaux's example revisited]
\label{cor:ex-Sophie-symm-set}
Let $E\subset \T$ be a Kronecker set with a continuous probability measure $\mu$ supported on it. Consider the symmetric probability measure $\nu$ on $E\cup \ol{E}$ defined by $\nu(A):=\frac12(\mu(A)+\mu(\ol{A}))$. Then the weak limit semigroup of the multiplicator $M_z$ on $L^2(\T,\nu)$ coincides with the set of contractions on $L^2(\T,\nu)$ commuting with $M_z$ that leave the closed real subspace  
\begin{equation}\label{eq:H-def}
\mathcal{H}:=\{f\in L^2(\T,\nu):\, f(\ol{z})=\ol{f(z)}\text{ for $\nu$-a.e.~}z\in\T\}
\end{equation}
invariant. 
\end{cor}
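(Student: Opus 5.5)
The plan is to first identify the right-hand side concretely, and then to reduce both inclusions to the situation of Proposition \ref{prop:ex-Sophie} by transporting everything from $E\cup\ol{E}$ back to $E$. By the argument at the beginning of the proof of Proposition \ref{prop:ex-Sophie} (Borel functional calculus and Nadkarni's lemma, now for $\nu$), every contraction commuting with $M_z$ on $L^2(\T,\nu)$ has the form $M_f$ with $\|f\|_\infty\le 1$. Such an $M_f$ leaves $\mathcal{H}$ invariant if and only if $f(\ol z)=\ol{f(z)}$ for $\nu$-a.e.\ $z$; for the "only if" direction, apply $M_f$ to $\indic\in\mathcal{H}$. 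So the claim is that the weak limit semigroup of $M_z$ equals $\{M_f:\ \|f\|_\infty\le1,\ f(\ol z)=\ol{f(z)}\ \nu\text{-a.e.}\}$.

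For the inclusion ``$\subset$'', each power $M_z^n$ commutes with $M_z$ and maps $\mathcal{H}$ into itself, since $\ol z^{\,n}=\ol{z^n}$. The set of contractions commuting with $M_z$ is $\wot$-closed. The set of operators leaving the real subspace $\mathcal{H}$ invariant is also $\wot$-closed, because $\mathcal{H}$ is norm closed and convex, hence weakly closed, and weak operator limits of $M_z^{n_k}x$ with $x\in\mathcal{H}$ stay in $\mathcal{H}$. This settles ``$\subset$''.

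For ``$\supset$'', I first observe that $E\cap\ol E=\emptyset$. Indeed, if $z\neq\pm1$ and both $z,\ol z\in E$, then the continuous function equal to $i$ at both points cannot be uniformly approximated by characters, because $\chi(\ol z)=\ol{\chi(z)}$. The points $\pm1$ cannot lie in $E$ at all, since characters take only the values $\pm1$ there. Consequently $L^2(\T,\nu)\cong L^2(E,\tfrac12\mu)\oplus L^2(\ol E,\tfrac12\mu\circ\mathrm{conj})$. For test functions $\varphi_i,\psi_i\in L^\infty(\T,\nu)$ I then write
\[
\int z^n\varphi_i\ol{\psi_i}\,d\nu=\tfrac12\int_E z^n\varphi_i\ol{\psi_i}\,d\mu+\tfrac12\,\ol{\int_E z^n\,\ol{\varphi_i(\ol z)}\,\psi_i(\ol z)\,d\mu(z)}.
\]
Because $f$ is symmetric, the same identity holds with $z^n$ replaced by $f$, the second term becoming $\tfrac12\ol{\int_E f\,\ol{\varphi_i(\ol z)}\psi_i(\ol z)\,d\mu}$. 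I then apply Lemma \ref{LemmepreuveexSophie} on $E$ to $f|_E$ with the $2r$ functions $\varphi_i\ol{\psi_i}$ and $\ol{\varphi_i(\ol z)}\psi_i(\ol z)$. This yields $h$ with $|h|=1$ on $E$. The Kronecker property provides $n\in\Z$ with $\|z^n-h\|_{\infty,E}<\delta$, and the usual $\varepsilon/2$ estimate from the proof of Proposition \ref{prop:ex-Sophie} shows that $M_f$ lies in the $\wot$-closure of $\{M_z^n:n\in\Z\}$ on $L^2(\T,\nu)$.

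Finally, to pass from $\Z$-powers to the weak limit semigroup, I follow Remark \ref{rem:Kronecker-sets}(a). Approximating $\indic$ uniformly on $E$ by $z^{n_k}$ with $n_k\to\infty$ automatically gives uniform approximation on $\ol E$, since $\ol{z}^{\,n_k}=\ol{z^{n_k}}$ and $\indic$ is real. Hence $M_z$ is rigid on $L^2(\T,\nu)$, and Proposition \ref{rem:prop-of-limits}(e) identifies the weak limit semigroup with the $\wot$-closure of the $\Z$-powers. The main obstacle is the second step, namely the disjointness of $E$ and $\ol E$ together with correctly folding the $\ol E$-integrals back onto $E$ so that the single function $h$ controls both halves. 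Everything else is a direct transcription of the proof of Proposition \ref{prop:ex-Sophie}.
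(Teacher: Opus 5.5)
Your proof is correct and follows essentially the paper's route: both unfold $\la\cdot,\cdot\ra_{L^2(\T,\nu)}$ into a $\mu$-integral and a conjugated $\mu$-integral and use the symmetry of $f$. The paper differs in three minor ways. It cites Proposition~\ref{prop:ex-Sophie} directly, obtaining $M_z^{n_k}\to M_f$ weakly on $L^2(\T,\mu)$ and applying this to the pairs $(g,h)$ and $(Ch,Cg)$ with $Cu(z)=u(\ol z)$, instead of re-running Lemma~\ref{LemmepreuveexSophie} with $2r$ test functions. Your $M_f\indic=f$ argument for $f\in\mathcal H$ is shorter than the paper's. Finally, the disjointness $E\cap\ol E=\emptyset$ that you call the main obstacle is never needed, since the unfolding identity is just the definition of $\nu$ and $f(\ol z)=\ol{f(z)}$ holds $\mu$-a.e.\ because $\mu\le 2\nu$.
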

\begin{proof}
The inclusion ``$\subset$'' follows from the fact that $M_z$ leaves $\mathcal{H}$ invariant and that $\mathcal{H}$ is closed. Indeed, let $S : L^2(\T,\nu) \to L^2(\T,\nu)$ be the operator defined by $S(f)(z) = \overline{f(\overline{z})}$. Then $S$ is an isometry of $L^2(\T,\nu)$ and $\mathcal{H} = \ker(S-I)$ is closed. 

To show the converse inclusion,  let $V$ be a contraction on $L^2(\T,\nu)$ commuting with $M_z$ and leaving $\mathcal{H}$ invariant. Then, as in the proof of Proposition \ref{prop:ex-Sophie}, $V=M_f$ for some $f\in L^\infty(\T,\nu)$ with $\|f\|_{L^\infty(\T,\nu)} \leq 1$. Moreover, it is easy to see that $f\in \mathcal{H}$: Indeed, denote $A:=\{z:\, f(\ol{z})\neq\ol{f(z)}\}$ which is Borel. Then $A=\ol{A}$ and its characteristic function $1_A$ belongs to $\mathcal{H}$. By the assumption, we have 
$$
(f1_A)(\ol{z})=f(\ol{z})1_A(\ol{z})=\ol{f(z)1_A(z)},
$$
i.e., $\ol{f(z)}=f(\ol{z})$ for $\nu$-a.e.~$z\in A$, implying $\nu(A)=0$.

To show that $V$ belongs to the weak limit semigroup of $M_z$ on $L^2(\T,\nu)$, observe that by $f\in \mathcal{H}$ we have $\|f\|_{L^\infty(\T,\mu )}=\|f\|_{L^\infty(\T,\nu )}\leq 1$. So by Proposition \ref{prop:ex-Sophie}, there exists a subsequence $(n_k)$ of $\N$ with $\displaystyle \limk M_z^{n_k}=M_f$ weakly as operators on $L^2(\T,\mu)$. Take now $g,h\in L^2(\T,\nu)$ and denote by $C$ the operator on $L^2(\T,\nu)$ given by $Cu(z):=u(\ol{z})$. Then again by $f\in \mathcal{H}$, we can write $\la (M_z^{n_k}-V)g,h\ra_{L^2(\T,\nu)}$ as 
\begin{eqnarray*}
&\ &
%\la (M_z^{n_k}-V)g,h\ra_{L^2(\nu)} &=& 
\frac12\left(\int_\T (z^{n_k}-f(z)) g(z)\ol{h(z)}\,d\mu + \int_\T (\ol{z}^{n_k} - f(\ol{z})) g(\ol{z}) \ol{h(\ol{z})}\, d\mu\right)\\
&\ &\quad = \frac12\left(
\la (M_z^{n_k}-M_f)g,h\ra_{L^2(\T,\mu)} + \ol{\la (M_z^{n_k} - M_f) Ch,Cg\ra_{L^2(\T,\mu)}}
\right)
\end{eqnarray*}
which converges to zero. Thus $V$ belongs to the weak limit semigroup of $M_z$ on $L^2(\T,\nu)$.
\end{proof}

We will now lift the above example to the measure-preserving system induced by the corresponding stationary Gaussian process. Recall that for every symmetric probability measure $\nu$ on $\T$ there exists a 
%As in, e.g., \cite[proof of Prop.~3.1]{EisnerGrivaux11} we use the classical construction of a %real-valued 
stationary Gaussian process $(X_n)_{n\in\Z}$ on a probability space $(\Omega,\P)$ with the spectral measure $\nu$, see Peller \cite[Chapter 8]{Peller-book} for details and the notation we will use. This stationary Gaussian process can be realized as follows.
%By ....TODO... (see also \cite[proof of Prop.~3.1]{EisnerGrivaux11}), 
%There exists a real-valued stationary Gaussian process $(X_n)_{n\in\N}$ on a probability space $(\Omega,\P)$ with the spectral measure $\mu$ which can be realized as follows. 
The space $\Omega:=\R^\Z$ is endowed with the $\sigma$-algebra generated by cylinder sets of the form $\{(\omega_j)_{j \in \Z} :\, (\omega_{-m},\ldots,\omega_m)\in A\}$ for $m\in\N_{0}$ and Borel $A\subset \R^{2m+1}$, and denote by $T$ the left shift on $\Omega$. The $T$-invariant probability measure $\P$ on such a cylinder set above is the probability that the process at times $-m,\ldots,m$ belongs to $A$.
% $(X_{-m},\ldots,X_m)$ belongs to $A$.
 The stationary  Gaussian process is then realized as the sequence of projections defined by $X_n((\omega_j)_{j \in \Z}):=\omega_n$, $n\in\Z$, which satisfies $TX_n=X_{n+1}$ for the Koopman operator $T$ of the shift. Recall that the spectral measure $\nu$ is continuous if and only if the measure-preserving transformation $T$ on $(\Omega,\P)$ is ergodic (and, in fact, weakly mixing). 

We denote by $G_r$ the closure of the real linear combinations of $(X_n)$ and by $G:=G_r+iG_r$ its complexification. The map $\Phi$ defined by $\Phi(\sum_j c_jX_j):=\sum_j c_j z^j$ on finite (complex) linear combinations of $(X_n)$ extends to a unitary operator $\Phi:G\to L^2(\T,\nu)$ and the shift $T$ corresponds to the multiplicator $M_z$ on $L^2(\T,\nu)$ under $\Phi$. Note that $\Phi(G_r)=\mathcal{H}$, where $\mathcal{H}$ is defined in \eqref{eq:H-def}. Moreover, the orthogonal decomposition 
$
L^2(\Omega,\P)= \displaystyle \bigoplus_{n=0}^{\infty} \Gamma(G)_n
$
 holds, where the closed $T$-invariant subspaces $\Gamma(G)_n$ are defined via the so-called Vick transform and each $\Gamma(G)_n$ is unitarily isomorphic to the symmetric Hilbert tensor product space $G^n_{\tiny\textcircled{s}}$. Note that $\Gamma(G)_0$ is the subspace of constants and $\Gamma(G)_1=G$.
% of $G$ with itself $n$ times.
%
Every contraction/isometry $S$ on $G$ can be extended to a contraction/isometry $\mathcal{F}(S)$ on the Fock space $
%\mathcal{F}(G)=
\displaystyle \bigoplus_{n=0}^\infty G^n_{\tiny\textcircled{s}}$ by defining it as identity on constants and on each $G^n_{\tiny\textcircled{s}}$ for $n\geq 1$ via restriction 
%$S{\tiny\textcircled{s}}\cdots {\tiny\textcircled{s}}S$
 to symmetric tensors of the tensor products $S\otimes \ldots \otimes S$, inducing a contraction/isometry on $L^2(\Omega,\P)$. Note that, using this notation, %construction, the left shift on $G$ induces the left shift on $L^2(\Omega,\P)$, i.e., 
\begin{equation}\label{eq:Tn=F(Tn)}
T^n=\mathcal{F}(T^n|_G) \quad \forall n\in\Z,
\end{equation}
where we identify $L^2(\Omega,\P)$ with $\displaystyle \bigoplus_{n=0}^\infty G^n_{\tiny\textcircled{s}}$. For more details about the theory of Fock spaces we also refer to Bayart and Grivaux \cite[Subsection 3.4]{BayartGrivaux06}.

\smallskip

We now determine the weak limit semigroup of measure-preserving systems induced by stationary Gaussian processes with symmetric spectral measures as in Corollary \ref{cor:ex-Sophie-symm-set}. Note that such systems are called \emph{Gaussian-Kronecker systems}, see, e.g., Lema\'nczyk, Parreau, Thouvenot \cite{LemanczykParreauThovenot00} for the terminology. %, and go back\footnote{\color{blue} Girsanov worked with subsets of $\T$ without rational relations, and every Kronecker set is such.} to Girsanov \cite{Girsanov58}.  %, see Newton \cite{Newton66}.}

\begin{prop}\label{prop:Gauss-wlimsgr}
Let $T$ be the left shift on $(\Omega,\P)$ induced by a stationary Gaussian process with spectral measure $\nu$  as in Corollary \ref{cor:ex-Sophie-symm-set}. Then the weak limit semigroup of $T$ coincides with the set of all %positive 
contractions $V$ on $L^2(\Omega,\P)$ commuting with $T$ such that %$V\Eins=\Eins$, %both $V$ and $V^*$ leave $G_r$ invariant.  
$V$ leaves $G_r$ (and hence $G$) %and each of the subspaces $\Gamma(G)_n$
 invariant and $V=\mathcal{F}(V|_G)$, where we identify $L^2(\Omega,\P)$ with $\displaystyle \bigoplus_{n=0}^\infty G^n_{\tiny\textcircled{s}}$.
\end{prop}
%Note that the property $V=\mathcal{F}(V|_G)$ automatically implies $V\Eins=\Eins$.
\begin{proof}
For the inclusion ``$\subset$'' let $V=\displaystyle \limk T^{n_k}$ weakly.
Since real linear combinations of $(X_n)$ form a convex set, by Mazur's theorem its weak closure coincides with its norm closure, i.e., $G_r$ is weakly closed. This shows that $V$ leaves $G_r$ invariant. 
%and $f\in G_r$. Since $T$ leaves $G_r$ (and hence $G$) invariant, by the Hahn--Banach theorem $Vf\in G$. But since $\la Vf,g\ra = \limk\la T^{n_k}f,g\ra\in \R$ holds for every $g\in G_r$ (recall that $(X_n)$ is an orthogonal sequence), we have $Vf\in G_r$. 
For the property $V=\mathcal{F}(V|_G)$ observe first that $V\Eins=\Eins$. Moreover, for $n\geq 1$ and $f_1, \dotsc, f_n,  g_1, \dotsc, g_n\in G$,
$$
\la (T^{n_k}\otimes\cdots \otimes T^{n_k})(f_1\otimes\cdots\otimes f_n),g_1\otimes\cdots \otimes g_n\ra =\la T^{n_k}f_1,g_1\ra\cdots \la T^{n_k} f_n ,g_n\ra
$$
converges as $k\to\infty$ to 
$$
\la Vf_1,g_1\ra\cdots \la V f_n ,g_n\ra=\la (V\otimes\cdots \otimes V)(f_1\otimes\cdots\otimes f_n),g_1\otimes\cdots \otimes g_n\ra
$$
implying (by overgoing to symmetric tensors) that $\mathcal{F}(T^{n_k}|_G)$ converges weakly to $\mathcal{F}(V|_G)$ on $\displaystyle \bigoplus_{n=0}^\infty G^n_{\tiny\textcircled{s}}$. Thus $V=\mathcal{F}(V|_G)$ follows from \eqref{eq:Tn=F(Tn)}.
%The property $V=\mathcal{F}(V|_G)$ follows from the corresponding property for each $T^n$ and the definition of the tensor product $V$
%The rest is clear. //IS IT ???//

For the converse inclusion, let $V$ be a contraction on $L^2(\Omega,\P)$ commuting with $T$ such that  $V$ leaves $G_r$ 
%and each $\Gamma(G)_n$ 
invariant and $V=\mathcal{F}(V|_G)$. Then also $G$ is $V$-invariant.  %and $V\Eins=\Eins$. Moreover, $V^*\Eins=\Eins$, or, equivalently, $\int_\Omega Vf\,d\P=\int_\Omega f\,d\P$ holds since $L^2_0(\Omega,P)$ is $V$-invariant and $V\Eins=\Eins$. 
The contraction $\Phi V|_G\Phi^{-1}$ commutes with $M_z$ and leaves $\Phi(G_r)=\mathcal{H}$ invariant, so by Corollary \ref{cor:ex-Sophie-symm-set} there exists a subsequence $(n_k)$ of $\N$ such that 
\begin{equation}\label{eq:conv-Vfg}
\limk\la T^{n_k}f,g\ra =\limk \la \Phi^{-1}M^{n_k}_z\Phi f,g\ra = \la Vf,g\ra
\end{equation}
for every $f,g\in G$. 
%As in the proof of \cite[Chap.~8, Thm.~3.2]{Peller-book}, 
By using the identification of $L^2(\Omega,\P)$ and $\displaystyle \bigoplus_{n=0}^\infty G^n_{\tiny\textcircled{s}}$  
together with
 $T^{n_k}=\mathcal{F}(T^{n_k}|_G)$ and $V=\mathcal{F}(V|_G)$, 
\eqref{eq:conv-Vfg} extends to all $f,g\in L^2(\Omega,\P)$ as above. 
\end{proof}

\begin{remark}
Let $T$ be induced by an arbitrary stationary Gaussian process. 
As the above proof shows, every $V\in \Tlim$ is necessarily a contraction on $L^2(\Omega,\P)$ commuting with $T$ leaving $G_r$ (and hence $G$) invariant and satisfying $V=\mathcal{F}(V|_G)$, where we identify $L^2(\Omega,\P)$ with $\displaystyle \bigoplus_{n=0}^\infty G^n_{\tiny\textcircled{s}}$.
Thus, the weak limit semigroup of processes with spectral measure as in Corollary \ref{cor:ex-Sophie-symm-set} is maximal (among all stationary Gaussian processes).  
\end{remark}

As a corollary we obtain the following. 
\begin{thm}\label{thm:wlim-Gaussian-polyn}
Let $T$ be the left shift on $(\Omega,\P)$ induced by a stationary Gaussian process with spectral measure $\nu$  as in Corollary \ref{cor:ex-Sophie-symm-set}.
Then the weak limit semigroup of $T$ coincides with the set of all contractions $V$ of the form\footnote{The series here converges %with respect to the $\sot$ topology
strongly.} 
\begin{equation}\label{eq:V=Fock}
V=\mathcal{F}\left(\sum_{n=-\infty}^\infty a_nT^n|_G\right)
\end{equation}
for \emph{real} coefficients $a_n$. 
%on $L^2_0(\Omega,\P)$ contains all polynomials $P(T_0)$ of (the restriction of the Koopman operator) $T_0$ with non-negative coefficients having sum $\leq 1$. 
\end{thm}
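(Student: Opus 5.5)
By Proposition \ref{prop:Gauss-wlimsgr}, the weak limit semigroup of $T$ is the set of contractions $V$ commuting with $T$, leaving $G_r$ invariant, and satisfying $V=\mathcal{F}(V|_G)$. Such a $V$ is determined by $W:=V|_G$. So I will identify which operators $W$ on $G$ can occur, and then show this class equals the closure, in the sense of \eqref{eq:V=Fock}, of real trigonometric series in $T|_G$.

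First I transport everything to $L^2(\T,\nu)$ via $\Phi$. Under $\Phi$, the conditions on $W$ (contraction on $G$, commuting with $T|_G$, leaving $G_r$ invariant) become: $\Phi W\Phi^{-1}=M_f$ with $\|f\|_{L^\infty(\nu)}\le 1$ and $f\in\mathcal{H}$, exactly as in the proof of Corollary \ref{cor:ex-Sophie-symm-set}. Note that $\mathcal{F}(W)$ is then automatically a contraction, since $W$ is a contraction on $G$. So the weak limit semigroup consists precisely of the operators $\mathcal{F}(\Phi^{-1}M_f\Phi)$ with $f\in\mathcal{H}$ and $|f|\le1$.

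Next I relate the symbols $f$ to the series in \eqref{eq:V=Fock}. For real $(a_n)$, the operator $\sum a_nT^n|_G$ corresponds to $M_f$ with $f(z)=\sum a_n z^n$. Real coefficients give $f(\ol z)=\ol{f(z)}$, so $f\in\mathcal{H}$. Since $V$ is assumed to be a contraction, $\|f\|_\infty\le 1$. Hence every operator of the form \eqref{eq:V=Fock} lies in the semigroup. For the converse, any $f\in\mathcal{H}\cap L^\infty(\nu)$ lies in $L^2(\T,\nu)$ and belongs to the closed span of $\{z^n\}$ there. Indeed, the trigonometric polynomials are dense in $L^2(\nu)$ because continuous functions are dense and trigonometric polynomials are uniformly dense in $C(\T)$. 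Moreover, the real-coefficient polynomials are dense in $\mathcal{H}$: if $p$ is a trigonometric polynomial approximating $f\in\mathcal{H}$, then so is $\tfrac12(p+S p)$, where $Sp(z)=\ol{p(\ol z)}$, and this averaged polynomial has real coefficients. Thus $f=\sum a_n z^n$ with real $a_n$, the series converging in $L^2(\nu)$. The operator $M_f$ is then the strong limit of $M_{p_N}$ on bounded functions, and more precisely on the dense set of $L^\infty$ vectors. This is the sense in which the footnote's strong convergence has to be understood.

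The main obstacle is this last point: making sense of $\sum a_nT^n|_G$ as a strongly convergent series. Its partial sums need not be uniformly bounded, since $L^2$ convergence of the symbols does not give $L^\infty$ control. I would interpret the series via its symbol, i.e.\ define $\sum a_n T^n|_G:=\Phi^{-1}M_f\Phi$ with $f=\sum a_nz^n$ in $L^2(\nu)$. This is consistent with the series on each $X_m$, because $M_f\mathbf 1=f$ gives convergence on $\Phi^{-1}$ of constants, and then on $T^m$-translates. I would then remark that strong convergence holds on the span of the $X_m$, where the series acting on $X_0$ converges in $G$ because $\Phi^{-1}z^n=X_n$. Alternatively, one could exploit the Kronecker property more strongly: the uniform density of characters on $E$ suggests one can even choose $a_n$ giving uniform convergence on $E\cup\ol E$. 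I would only pursue that if the footnote's strong convergence is needed globally.
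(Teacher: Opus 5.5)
Your reduction to symbols is sound. $\Tlim$ consists exactly of the operators $\mathcal{F}(\Phi^{-1}M_f\Phi)$ with $f\in\mathcal{H}$ and $\|f\|_{L^\infty(\nu)}\le 1$, and the inclusion of the operators \eqref{eq:V=Fock} into $\Tlim$ goes through as you say. The gap is in the other inclusion, at the sentence ``thus $f=\sum a_nz^n$ with real $a_n$, the series converging in $L^2(\nu)$''.

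\textbf{Density does not give a series.} Density of real trigonometric polynomials in $\mathcal{H}$ only gives polynomials $p_k\to f$ whose coefficients change with $k$. It does not give a single coefficient sequence whose partial sums converge, because $(z^n)$ is neither orthogonal nor a Schauder basis of $L^2(\T,\nu)$ here. Indeed $\la z^m,z^n\ra=\int_\T z^{m-n}\,d\nu$, and by the Kronecker property $\|z^{n_k}-\indic\|_{L^2(\nu)}\to 0$ along some $|n_k|\to\infty$.

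\textbf{$L^2$ convergence is not strong convergence.} Even an $L^2$-convergent expansion would not give the strong convergence the statement requires. By the uniform boundedness principle, $\sum a_nT^n|_G$ converges strongly to $\Phi^{-1}M_f\Phi$ if and only if the partial sums $p_N$ converge to $f$ in $L^2(\nu)$ \emph{and} $\sup_N\|p_N\|_{L^\infty(\nu)}<\infty$. Defining the series through its symbol replaces the claim rather than proving it. Your fallback of uniform convergence on $E\cup\ol{E}$ cannot work for general $f$: uniform limits of trigonometric polynomials are continuous there, while $f$ is merely in $L^\infty(\nu)$.

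The paper's own shortcut does not help you either. It reads off the $a_n$ from the Toeplitz matrix $(\la VX_m,X_n\ra)_{n,m}$, which presupposes that $(X_n)$ is an orthonormal basis of $G$. That fails for the same reason, since $\la X_m,X_n\ra=\int_\T z^{m-n}\,d\nu$.

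\textbf{The missing idea.} You need to construct one real sequence $(a_n)$ with uniformly bounded partial sums, and this is exactly where the Kronecker property has to be used. Here is one way.
\begin{itemize}
\item Work on $E$ with $\mu$ only. All polynomials below have real coefficients and $f\in\mathcal{H}$, so every estimate on $E$ transfers to $\ol{E}$.
\item Take continuous $g_k$ on $E$ with $|g_k|\le 1$, $g_0=0$, and $g_k\to f$ in $L^2(\mu)$. Lusin's theorem followed by the radial retraction onto $\ol{\D}$ does this.
\item Since $E$ is totally disconnected, a continuous $h$ on $E$ with $\|h\|_\infty\le c$ is uniformly close to a locally constant function that is the average of two locally constant unimodular functions. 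Hence $h$ is uniformly close to $\frac c2(z^{n_1}+z^{n_2})$, a real polynomial with coefficient sum $c$.
\item Multiply this polynomial by $z^m$, where $z^m$ is uniformly close to $\indic$ on $E$ and $m$ is large (Remark~\ref{rem:Kronecker-sets}(a)). This pushes its frequencies beyond all previously used ones.
\item Pass from $g_k$ to $g_{k+1}$ in $J_k\to\infty$ steps. At step $j$, approximate in this way the difference between the target $g_k+\frac{j}{J_k}(g_{k+1}-g_k)$ and the \emph{current partial sum}, with errors tending to $0$. Measuring against the current partial sum is what stops the errors from accumulating.
\end{itemize}
Then every partial sum differs uniformly on $E$ from a convex combination of $g_k$ and $g_{k+1}$ by at most $O(1/J_k)$ plus a vanishing error. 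So the $p_N$ are uniformly bounded on $E\cup\ol{E}$ and converge to $f$ in $L^2(\nu)$. This gives $\sum_n a_nT^n|_G=\Phi^{-1}M_f\Phi$ strongly with real $a_n$, as the statement requires.
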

%Note that, in particular, such transformations $T$ are $[0,1]$-rigid, but also include operators in the weak limit semigroup which coincide on $G$ with $-\lambda I$, $\lambda \in[0,1]$, and other negative operators of the above form.
\begin{proof}
For the inclusion ``$\subset$'', let $V$ be a contraction on $L^2(\Omega,\P)$ commuting with $T$ such that $V$ leaves $G_r$ (and hence $G$) %and each of the subspaces $\Gamma(G)_n$
 invariant and $V=\mathcal{F}(V|_G)$. By Proposition \ref{prop:Gauss-wlimsgr} it suffices to find $(a_n)\subset \R$ satisfying \eqref{eq:V=Fock}. Consider the operator $V|_G$ which we can write as an infinite matrix $(a_{n,m})_{n,m\in\Z}$ with respect to the orthonormal basis $(X_n)$ of $G$. We show that this matrix is constant on every diagonal, i.e., $a_{n,m}=a_{n+1,m+1}$ for every $n,m\in\Z$. Indeed, since $V$ commutes with $T$ we have
\begin{eqnarray*}
a_{n+1,m+1}&=&\la VX_{m+1},X_{n+1} \ra 
= \la VTX_m,X_{n+1} \ra\\
 &=&\la TVX_m,X_{n+1} \ra 
 =\la VX_m,T^{*}X_{n+1} \ra =\la VX_m,X_{n} \ra =a_{n,m}.
\end{eqnarray*}
Since the matrix corresponding to the left shift has ones on the lower diagonal and otherwise zero, the sequence $a_n:=a_{-n,-n}$ satisfies \eqref{eq:V=Fock}.

For the converse inclusion, let $V$ be a contraction of the form \eqref{eq:V=Fock}. By Proposition \ref{prop:Gauss-wlimsgr} we just need to check that $V$ commutes with $T$. This clearly holds on $G$
%Then $V|_G$ is a contraction commuting with $T|_G$
 implying by \eqref{eq:Tn=F(Tn)} and \eqref{eq:V=Fock}
$$
TV=\mathcal{F}(T|_G)\mathcal{F}(V|_G)=\mathcal{F}(TV|_G)=\mathcal{F}(VT|_G)=VT,
$$ 
where we again identify $L^2(\Omega,\P)$ with $\displaystyle \bigoplus_{n=0}^\infty G^n_{\tiny\textcircled{s}}$.
\end{proof}

\begin{remark}
%Thus we see that the weak limit semigroup of $T$ is \emph{much} larger than the set of all  contractions $V$ with $V_0=\sum_{n=-\infty}^\infty a_nT^n_0$ for non-negative coefficients having sum $\leq 1$. Indeed, on the reducing infinite-dimensional subspace $G$ also operators $-cI$ with $c\in [0,1]$ and, more generally, contractive linear combinations of the powers of $T$ with nonpositive coefficients are in the weak closure of $T|_G$. 
In particular, 
every such $T$ is $0$- and $1$-rigid but not\footnote{This is because of the tensor product definition of $\mathcal{F}$.} $\lambda$-rigid for every $\lambda\in (0,1)$, although $\Tlim$  contains operators coinciding with $\lambda I$ on $G$ for such $\lambda$. Moreover, it also includes operators in the weak limit semigroup which coincide on the $T$-reducing infinite-dimensional subspace $G$ with $-\lambda I$, $\lambda \in[0,1]$, and other negative linear combinations of the powers of $T$. % operators of the above form.
This might look contraintuitive since elements of the weak limit semigroup of $T$ are positive operators on  $L^2(\Omega,\P)$. Note however that 
%The following example might explain this phenomenon: The 
tensor product extensions of $-I$ of even degree equal $I$ which might explain this phenomenon. 
\end{remark}

We recall for completeness the following fundamental result of Halmos \cite[p.~77]{Halmos-book}.
\begin{prop}[Halmos' Conjugacy Lemma]\label{prop:Halmos-conjugacy}
   For every aperiodic\footnote{A transformation is called \emph{aperiodic} if the set of its periodic points has measure zero. It is easy to see that every ergodic transformation in $\TT$ is aperiodic.} %(e.g., ergodic) 
   transformation $T\in\TT$, its conjugacy class 
   $\{S^{-1}TS:\, S\in \TT\}$
   is dense in $\TT$.
\end{prop}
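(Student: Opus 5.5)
The plan is to follow Halmos' classical route, which combines two ingredients. The first is the Rokhlin lemma, which applies to aperiodic $T$. The second is the density in $\TT$ of cyclic permutations of dyadic intervals. Fix $R\in\TT$ to approximate, together with a strong neighbourhood of $R$ determined by finitely many functions $g_1,\dots,g_d\in L^2[0,1]$ and some $\veps>0$. By the Weak Approximation Theorem (density of dyadic permutations) we may choose a transformation $P$ with the following properties: for some $n$ it permutes the dyadic intervals of length $2^{-n}$, it is a single cycle of length $N=2^n$, and $\|Pg_j-Rg_j\|<\veps$ for all $j$. We first refine to make $P$ cyclic. A general permutation of the intervals can be approximated by a cyclic one at a finer level, because each interval splits into many subintervals that can be linked so as to form one cycle while changing the map only on a set of small measure. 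So it suffices to show that such a cyclic permutation $P$, which has a tower structure $I_0\to I_1\to\dots\to I_{N-1}\to I_0$ with $m(I_k)=1/N$, lies in the closure of the conjugacy class of $T$.

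Since $T$ is aperiodic, the Rokhlin lemma gives a set $B$ such that $B,TB,\dots,T^{N-1}B$ are pairwise disjoint and $m\big(\bigcup_{k<N}T^kB\big)>1-\delta$. We will construct an invertible measure-preserving $S$ that maps $T^kB$ onto a subset $J_k\subset I_k$ of measure $m(B)$. We take $J_0\subset I_0$ to be any measurable subset with $m(J_0)=m(B)$; such a set exists since $m(B)\le 1/N$, and by standard-space isomorphism there is a measure-preserving bijection $\phi:B\to J_0$. We then set $J_k:=P^kJ_0$ and define $S$ on $T^kB$ by $S:=P^k\phi T^{-k}$. On the remaining set $[0,1]\setminus\bigcup_k T^kB$, which has measure $<\delta$, we define $S$ to be any measure-preserving bijection onto $[0,1]\setminus\bigcup_kJ_k$. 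This is possible because both sets have the same measure in a non-atomic standard space. With these choices, $STS^{-1}=P$ holds on $\bigcup_{k<N-1}J_k$, a set of measure $>1-\delta-1/N$.

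It remains to turn agreement of the maps off a small set into closeness in the strong operator topology. If two transformations $Q_1,Q_2\in\TT$ agree outside a set $E$, then for bounded $g$ we have $\|Q_1g-Q_2g\|_2\le 2\|g\|_\infty m(E)^{1/2}$. The operators are uniformly bounded, so by density it is enough to treat bounded $g_j$. Making $\delta$ small and $N$ large (we may increase $N$ by refining $P$ further, as in the first step), we obtain $\|STS^{-1}g_j-Pg_j\|<\veps$, and hence $STS^{-1}$ lies within $2\veps$ of $R$ on each $g_j$. Since $(S^{-1})^{-1}TS^{-1}$ belongs to the conjugacy class, this proves density.

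The main obstacle is the preliminary reduction to cyclic dyadic permutations, namely the Weak Approximation Theorem together with the cyclic refinement. I would prove it directly in operator language: approximate $Rg_j$ via conditional expectations onto the dyadic $\sigma$-algebra of level $n$. I would use that $R$ maps the dyadic intervals to sets which can be approximated by unions of intervals, and then apply a marriage-type matching of subintervals to build a permutation that nearly agrees with $R$ in measure. The measure-theoretic bookkeeping for the conjugating map $S$ is routine by comparison.
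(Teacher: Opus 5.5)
Your architecture is Halmos' classical proof: Rokhlin towers plus density of cyclic dyadic permutations. The paper gives no proof of its own and simply cites Halmos for this route. Your conjugation step is correct: $STS^{-1}=P$ on $\bigcup_{k<N-1}J_k$, whose complement has measure $<\delta+1/N$, and the $L^2$ estimate for maps agreeing off a small set finishes it.

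The gap is in the step you yourself flag as the main obstacle. The mechanism you give for reducing to cyclic permutations cannot work: a cyclic refinement of a dyadic permutation $P$ of rank $n$ can \emph{not} in general be obtained ``while changing the map only on a set of small measure''.
\begin{itemize}
\item Let $\pi$ be the permutation of the rank-$n$ intervals $D_i$ induced by $P$, and suppose it has $c$ cycles. At rank $n+r$ the map $P$ (a translation on each $D_i$) has $c2^r$ cycles.
\item Redefining a permutation on $m\ge 1$ points changes its number of cycles by at most $m-1$. Indeed, the correction $\pi^{-1}\pi'$ permutes those $m$ points, so it is a product of at most $m-1$ transpositions.
\item Hence every cyclic dyadic permutation of rank $n+r$ differs from $P$ on a set of measure at least $c2^{-n}$. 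For $P=I$ it differs from $P$ everywhere.
\item The same count defeats your parenthetical ``we may increase $N$ by refining $P$ further''. Going from a cyclic $P$ of length $N$ to a cyclic one of length $2^rN$ forces disagreement on a set of measure at least $1/N$, which is the very quantity you want to make small.
\end{itemize}
A similar caution applies to your sketch of the Weak Approximation Theorem. A dyadic permutation cannot in general agree pointwise with $R$ on a large set (think of an irrational rotation). So ``nearly agrees in measure'' has to mean that $m(PD\,\triangle\,RD)$ is small for coarse dyadic $D$.

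The repair is to aim only for closeness in the operator topology, which is all your final estimate uses.
\begin{itemize}
\item Take $P$ of a rank $n$ so large that each $g_j$ is within $\eta$ in $L^2$ of a function $h_j$ constant on every $D_i$. Raising the rank is free, since a dyadic permutation of rank $n_0$ is one of every rank $n\ge n_0$.
\item At rank $n+r$, work along each $\pi$-cycle $D_{i_1}\to\dots\to D_{i_\ell}\to D_{i_1}$. Send the $j$-th subinterval of $D_{i_s}$ to the $j$-th subinterval of $D_{i_{s+1}}$ for $s<\ell$. Send the $j$-th subinterval of $D_{i_\ell}$ to the $(j+1)$-th (mod $2^r$) subinterval of $D_{i_1}$.
\item This gives one cycle per $\pi$-cycle and maps each $D_i$ exactly onto $PD_i$. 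So it acts on every $h_j$ exactly as $P$ does, even though it disagrees with $P$ pointwise on a large set.
\item Then merge the $c\le 2^n$ cycles into one by redirecting one subinterval from each. This changes the map only on a set of measure at most $2^{-r}$.
\end{itemize}
The resulting cyclic $P''$ has length $N=2^{n+r}$, as large as you like, and satisfies $\|P''g_j-Pg_j\|\le 2\eta+2\|h_j\|_\infty 2^{-r/2}$. Make the choices in this order: $h_j$ and $P$, then $r$, then $\delta$, then the Rokhlin set $B$. With that, your conjugation argument completes the proof.
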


Since the transformation $T$ from Theorem \ref{thm:wlim-Gaussian-polyn} is ergodic (and even weakly mixing) by the continuity of the spectral measure and since %$\R^\Z$ is isomorphic to $[0,1]$
all standard non-atomic Borel probability spaces are isomorphic to $[0,1]$ (see, e.g., \cite[Remark 8.48]{EF-book} or \cite[Thm. 17.41]{Kechris-book}), Halmos' Conjugacy Lemma implies the following. 

\begin{cor}\label{cor:dense-real-coef}
For a dense set of measure-preserving transformations $T\in\TT$, 
%the weak limit semigroup of $T$ is much larger than the set of all operators $V$ with $V_0=\sum_{n=-\infty}^\infty a_nT^n_0$ for non-negative coefficients having sum $\leq 1$ in the following sense: 
there exists a closed $T$-reducing infinite-dimensional subspace $D$ of $L_0^2[0,1]$ such that all operators of the form\footnote{The series converges with respect to the operator norm topology by the summability condition on the sequence $(a_n)_{n \in \Z}$.}
$$
\sum_{n=-\infty}^\infty a_nT^n|_D
$$
with $a_n\in\R$ satisfying $\displaystyle \sum_{n=-\infty}^\infty |a_n|\leq 1$ belong to the restriction of the weak limit semigroup of $T$ to $D$.
\end{cor}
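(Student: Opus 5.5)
The plan is to transport the Gaussian--Kronecker example of Theorem \ref{thm:wlim-Gaussian-polyn} to $[0,1]$ and then spread it densely using Halmos' Conjugacy Lemma.

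\textbf{Step 1: the model system.} Fix a Kronecker set $E$ carrying a continuous probability measure $\mu$, and let $\nu$ be its symmetrisation as in Corollary \ref{cor:ex-Sophie-symm-set}. Let $T$ be the associated Gaussian shift on $(\Omega,\P)$. Since $(\Omega,\P)$ is a standard non-atomic probability space, there is an isomorphism $\theta$ onto $[0,1]$ with Lebesgue measure. Conjugating by $\theta$ gives $T_1\in\TT$, and $T_1$ is ergodic because $\nu$ is continuous.

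\textbf{Step 2: the subspace $D$ for the model.} Take $D_1$ to be the image of $G$ under the unitary induced by $\theta$. Because $\Gamma(G)_0$ is the constants and $G=\Gamma(G)_1$ is orthogonal to it, $D_1\subset L^2_0[0,1]$. The subspace $D_1$ is closed and infinite-dimensional, since $(X_n)$ is orthonormal. It is $T_1$-reducing: $T$ is unitary on $G$ and $T^{-1}G=G$.

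Now let $(a_n)\subset\R$ with $\sum|a_n|\le1$. The operator $A:=\sum a_nT^n|_G$ is a contraction on $G$. Hence $\mathcal{F}(A)$ is a contraction, and by Theorem \ref{thm:wlim-Gaussian-polyn} it lies in $\Tlim$. Its restriction to $G$ is exactly $A$. Therefore every such operator belongs to the restriction of the weak limit semigroup to $D_1$.

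\textbf{Step 3: conjugation.} By Proposition \ref{prop:Halmos-conjugacy}, the conjugacy class $\{S^{-1}T_1S:\,S\in\TT\}$ is dense in $\TT$. For $T=S^{-1}T_1S$, put $D:=S^{-1}D_1$, where $S$ also denotes its Koopman operator.

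This $D$ is closed, infinite-dimensional and contained in $L^2_0$, because Koopman operators preserve integrals. It is $T$-reducing, since $T^{\pm1}S^{-1}=S^{-1}T_1^{\pm1}$.

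Weak limits transfer under conjugation: if $T_1^{n_k}\to V$ weakly, then $T^{n_k}=S^{-1}T_1^{n_k}S\to S^{-1}VS$ weakly. This operator restricts on $D$ to $S^{-1}\left(\sum a_nT_1^n|_{D_1}\right)S=\sum a_nT^n|_D$.

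\textbf{Main obstacle.} The only delicate point is the bookkeeping in the identification of $(\Omega,\P)$ with $[0,1]$. One must check that $G$ really sits in the mean-zero part and that reducibility survives the isomorphism. Both follow from orthogonality to $\Gamma(G)_0$ and from the unitary invariance of all the notions involved.
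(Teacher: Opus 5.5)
Your proposal is correct and is essentially the paper's argument: you transport the Gaussian--Kronecker shift of Theorem \ref{thm:wlim-Gaussian-polyn} to $[0,1]$ via an isomorphism of standard non-atomic spaces, take $D$ to be the image of $G$, and spread the property densely with Halmos' Conjugacy Lemma (Proposition \ref{prop:Halmos-conjugacy}), with ergodicity supplying aperiodicity. One justification needs fixing, since $(X_n)$ is \emph{not} orthonormal here: $\Phi X_n=z^n$ and $E$ is Kronecker, so $\langle X_{n_k},X_0\rangle=\int_\T z^{n_k}\,d\nu\to1$ along any sequence with $z^{n_k}\to1$ uniformly on $E$. The conclusion still holds, because $\Phi$ identifies $G$ unitarily with $L^2(\T,\nu)$ and $\nu$ is non-atomic, so $D_1$ is infinite-dimensional.
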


\section{Generic asymptotics for transformations}\label{sec:gen-transf}

The following example will be crucial for the rest of our paper, see Katok \cite[Thm.~I.2.1 and Sections~I.3.2 and I.4.4]{Katok03}, Ryzhikov \cite{Ryzhikov07b} and Goodson \cite{Goodson99}.
In fact, in Katok \cite[Thm.~I.2.1]{Katok03} a generic class of such transformations is presented.

\begin{prop}[Katok--Stepin transformation]\label{prop:ex-Katok}
    There exists a weakly mixing transformation $T\in\mathcal{T}$ such that the weak limit semigroup\footnote{Here we make use of Remark \ref{rem:prop-of-limits}(e).} $\Tlim$ contains operators of the form $\frac12(I+T^k)$ for every $k\in\N_0$. 
\end{prop}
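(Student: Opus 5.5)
We will construct $T$ via periodic approximation in the spirit of Katok--Stepin, but the quickest route available from what precedes is a Gaussian--Kronecker type construction adapted to produce $\frac12(I+T^k)$. The obstacle is that Theorem \ref{thm:wlim-Gaussian-polyn} only delivers operators of the form $\mathcal{F}(\sum a_nT^n|_G)$. These agree with $\frac12(I+T^k)$ on $G$, but on higher chaos they give tensor powers, not $\frac12(I+T^k)$. So instead we follow the cyclic approximation scheme directly.

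\textbf{Step 1 (combinatorial model).} Build $T$ as a limit of cyclic permutations of Rokhlin towers: $T_j$ is a rotation of a tower of height $h_j$ over a base $B_j$, with $m(\text{tower})\to1$. Arrange the cutting and stacking so that the next stage consists of columns of height $h_j$ stacked with spacers. On a set of measure $\approx \frac12$ the copies of the old tower are followed by $k$ spacer-shifts, and elsewhere they are not. This is the Katok--Stepin/Ryzhikov ``$(1/2,k)$-construction'': for a suitable time $n_j$ (a multiple of $h_j$), $T^{n_j}$ maps each level of the $j$-th tower half onto itself and half onto its $T^k$-image, up to an error $\to0$.

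\textbf{Step 2 (weak convergence on tower-measurable sets).} For $A,B$ unions of levels of the $j$-th tower, $m(T^{n_i}A\cap B)\to\frac12 m(A\cap B)+\frac12 m(T^kA\cap B)$ as $i\to\infty$ along $i\ge j$. Such sets generate (their indicators are dense in $L^2$), and the powers are contractions. Hence $T^{n_i}\to \frac12(I+T^k)$ weakly. To get all $k$ simultaneously, we interleave stages dedicated to $k=1,2,\dots$, each $k$ recurring infinitely often. Alternatively, we obtain only $k=1$ and deduce the others as below.

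\textbf{Step 3 (weak mixing).} We include at infinitely many stages a further sequence of times $m_i$ at which $T^{m_i}$ spreads tower levels nearly uniformly, so that $T^{m_i}\to\Pconst$ weakly. By $0$-rigidity this gives weak mixing. Alternatively, by genericity we intersect with Corollary \ref{cor:lambda-rigid-typical}, using that the Katok class is generic, as cited.

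An alternative shortcut for Step 2, given only $\frac12(I+T)$ and $I$ in $\Tlim$, would use Proposition \ref{rem:prop-of-limits}(b) to take products $\frac1{2^k}(I+T)^k$. This does not directly give $\frac12(I+T^k)$, however, so the interleaved construction is needed. The main obstacle is Step 1: arranging the spacer placement so that exactly half the mass is shifted by $k$ with vanishing error. The error control in Step 2 then reduces to summable measure estimates on the defect sets.
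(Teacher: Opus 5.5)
Your cutting-and-stacking scheme, which stacks copies of the old tower with or without $k$ spacers on two families of columns of measure $\frac12$ each, is the same mechanism as the $(h,h+k)$-type approximations of Katok--Stepin and Ryzhikov's rank-one constructions, and the paper gives no proof of its own but only cites Katok, Ryzhikov and Goodson, so your route is essentially the intended one. Two small repairs are needed: since $k\in\N_0$ includes $k=0$, i.e.\ $I\in\Tlim$ (rigidity), you must also interleave spacer-free stages so that $T^{h_i}\to I$; and Step~3 can be dropped, because a rank-one map is ergodic and $\frac12(I+T)\in\Tlim$ forces every eigenvalue $\lambda$ of the unitary $T$ to satisfy $|1+\lambda|=2$, hence $\lambda=1$, which already gives weak mixing.
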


%Note that it is also shown in Katok \cite[Thm.~I.2.1 and Sec.~I.4.4]{Katok03} also show that such transformations are generic.

We further need the following simple property which we state for completeness. 
\begin{lemma}
    \label{prop:polyn-Gdelta}
    Let $\mathcal{P}$ be a collection of polynomials. Then  transformations $T$ such that the limit semigroup $\Tlim$ contains $P(T)$ for every $P\in\mathcal{P}$ form a $G_\delta$ subset of $\mathcal{T}$.
\end{lemma}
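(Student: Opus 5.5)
Fix the metric $d$ from Subsection~\ref{introsubsecoperators}, which induces the $\wot$ topology on the unit ball of $\mathcal{L}(L^2[0,1])$; all operators below are contractions, so this is legitimate. Since $\mathcal{T}$ carries the strong (equivalently weak) operator topology of the Koopman operators, I will write the set in question as a countable intersection of open sets.

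First treat a single polynomial $P$. The claim is that $P(T)\in\Tlim$ if and only if
\[
T\in\bigcap_{N\in\N}\ \bigcup_{n\geq N}\ \Big\{S\in\mathcal{T}:\ d\big(S^n,P(S)\big)<\tfrac1N\Big\}.
\]
Both operators $S^n$ and $P(S)$ are bounded by $\max(1,\sum|p_j|)$, where the $p_j$ are the coefficients of $P$; after rescaling, the metric $d$ still describes $\wot$ on this ball.

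For ``$\Rightarrow$'': if $T^{n_k}\to P(T)$ weakly with $n_k\to\infty$, then for each $N$ some $n_k\geq N$ satisfies $d(T^{n_k},P(T))<1/N$.

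For ``$\Leftarrow$'': for each $N$ pick $m_N\geq N$ with $d(T^{m_N},P(T))<1/N$. Then $m_N\to\infty$, so after passing to a strictly increasing subsequence we get $T^{m_{N_k}}\to P(T)$ weakly, i.e. $P(T)\in\Tlim$.

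Openness is the one point that needs care. For fixed $n$, the map $S\mapsto S^n$ is $\sot$-continuous on contractions, since products are jointly strongly continuous on bounded sets. Likewise $S\mapsto P(S)$ is $\sot$-continuous as a finite linear combination of powers. Since $\sot$ convergence implies $\wot$ convergence, the map $S\mapsto d(S^n,P(S))$ is continuous on $\mathcal{T}$ with its (strong) topology. Hence each set $\{S:\ d(S^n,P(S))<1/N\}$ is open, each union over $n\geq N$ is open, and the intersection over $N$ is a $G_\delta$ set.

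For a collection $\mathcal{P}$, the set of transformations in question is $\bigcap_{P\in\mathcal{P}}$ of the sets above. If $\mathcal{P}$ is countable, this is again $G_\delta$. For an uncountable $\mathcal{P}$, I would reduce to a countable subfamily in one of two ways:

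1. Use the closedness of $\Tlim$ from Proposition~\ref{rem:prop-of-limits}(a). If $P_m\to P$ coefficientwise with bounded degree, then $P_m(T)\to P(T)$ in norm, so a countable coefficientwise-dense subfamily $\mathcal{P}'$ of each bounded-degree slice of $\mathcal{P}$ suffices.
2. Note that the set of polynomials of degree at most $D$, viewed inside $\C^{D+1}$, is separable, so every family of polynomials has a countable dense subset in this sense.

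Then the set for $\mathcal{P}'$ equals the set for $\mathcal{P}$ and is $G_\delta$. Everything except this reduction is routine; the reduction is the only (mild) obstacle, and it is settled by separability of the coefficient spaces together with the weak closedness of $\Tlim$.
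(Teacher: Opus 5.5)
Your proposal is correct and is essentially the paper's proof. You reduce to a countable dense subfamily $\mathcal{P}'\subset\mathcal{P}$ and write the set as a countable intersection of the open sets $\{T\in\mathcal{T}:\ \exists\, n\ge N,\ d(T^n,P(T))<1/k\}$ with $P\in\mathcal{P}'$. The only differences are cosmetic: the paper gets $\mathcal{P}'$ from separability for uniform convergence on compact sets and leaves implicit the weak closedness of $\mathcal{L}_T$, which you invoke explicitly to pass from $\mathcal{P}'$ back to $\mathcal{P}$.
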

\begin{proof} We can assume without loss of generality that each $P(T)$ is a contraction (otherwise the assertion holds trivially since the empty set is $G_\delta$). 
    Any collection of polynomials is clearly separable for the topology of uniform convergence on compact sets. Let $\mathcal{P}'$ be a dense countable subset of $\mathcal{P}$ and observe that $T$ is as desired if and only if it belongs to 
    \begin{equation}\label{eq:G-delta-transf}
\bigcap_{k,N, P} 
\left\{T\in \mathcal{T}:\, \exists \ n\geq N:\,   d(T^n,P(T))<\frac1k
\right\},
\end{equation}
where the intersection is taken over all $k,N\in\N$ and all polynomials $P\in \mathcal{P}'$ and  
$d$ denotes the metric inducing the weak topology on the space of contractions of $L^2[0,1]$ (see Subsection \ref{introsubsecoperators}). It is clear that each set $M_{k,N,P}$ under the intersection sign in \eqref{eq:G-delta-transf} is an open subset of $\TT$, concluding the proof.
\end{proof}

This gives the following result essentially due to  Ryzhikov \cite{Ryzhikov07b} and Katok \cite[Thm.~I.2.1 and Section~I.4.4]{Katok03}. Here, $\Pconst$ again denotes the orthogonal projection onto the constants. For the reader's convenience we present some details.

\begin{thm}\label{thm:generic-transf}
   Weakly mixing transformations with the property $\frac12(I+T^k)\in\Tlim$ for every $k\in \N_{0}$ form a dense $G_\delta$ set in $\mathcal{T}$. For every such $T$ the weak limit semigroup $\Tlim$ is convex and hence contains operators of the form $aP_{\text{const}} + a_0I+\ldots+a_kT^k$ for all $k\in \N$ and all non-negative $a,a_0,\ldots,a_k$ with sum equal to one.  
\end{thm}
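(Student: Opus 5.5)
The plan is to establish the $G_\delta$ property, then density, and then derive the convexity consequences from Proposition \ref{rem:prop-of-limits}.

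\emph{The $G_\delta$ property.} Weak mixing is a dense $G_\delta$ property in $\mathcal{T}$ by Halmos' classical result (it is also $0$-rigidity, which is dense $G_\delta$ by Theorem \ref{thm:stepin}). The collection $\mathcal{P}=\{\frac12(1+z^k):\,k\in\N_0\}$ is a family of polynomials, so Lemma \ref{prop:polyn-Gdelta} shows that the set of $T$ with $\frac12(I+T^k)\in\Tlim$ for all $k$ is $G_\delta$. Intersecting the two sets gives a $G_\delta$ set.

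\emph{Density.} Here we use the Katok--Stepin transformation $T_1$ from Proposition \ref{prop:ex-Katok}. It is weakly mixing, hence ergodic and aperiodic. By Halmos' Conjugacy Lemma (Proposition \ref{prop:Halmos-conjugacy}), its conjugacy class $\{S^{-1}T_1S:\,S\in\mathcal{T}\}$ is dense in $\mathcal{T}$. Both properties are conjugation invariant:
\begin{itemize}
\item weak mixing is preserved under conjugation;
\item if $T_1^{n_j}\to\frac12(I+T_1^k)$ weakly, then $(S^{-1}T_1S)^{n_j}=S^{-1}T_1^{n_j}S\to\frac12(I+(S^{-1}T_1S)^k)$ weakly, since conjugation by a unitary is $\wot$-continuous.
\end{itemize}
So the whole conjugacy class lies in our set, which proves density. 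The main point needing care is that Proposition \ref{prop:ex-Katok} genuinely yields all $k\in\N_0$ with limits along subsequences tending to infinity. For $k=0$ the condition reads $I\in\Tlim$, i.e.\ rigidity; Proposition \ref{rem:prop-of-limits}(e) is what lets us pass from the weak closure of the powers to $\Tlim$ itself.

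\emph{Convexity and its consequences.} For such $T$, Proposition \ref{rem:prop-of-limits}(d) gives that $\Tlim$ is convex. Next we collect the operators that lie in $\Tlim$:
\begin{itemize}
\item $I\in\Tlim$ (the case $k=0$);
\item each $T^j\in\Tlim$ by Proposition \ref{rem:prop-of-limits}(a);
\item $\Pconst\in\Tlim$, because weak mixing gives a density-one sequence $n_j$ with $T^{n_j}\to\Pconst$ weakly (weak mixing is $0$-rigidity of $T_0$, combined with $T\Eins=\Eins$).
\end{itemize}
Convexity of $\Tlim$ then puts every convex combination $a\Pconst+a_0I+\dots+a_kT^k$ into $\Tlim$.

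One small technical check remains: convexity via (d) requires combining an arbitrary pair of weak limits. We only need finitely many points, and iterated pairwise midpoints together with $\wot$-closedness give all convex combinations. The main expected obstacle is thus citing Katok/Ryzhikov correctly for Proposition \ref{prop:ex-Katok}; everything else is formal.
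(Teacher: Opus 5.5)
Your proposal is correct and follows essentially the same route as the paper. The $G_\delta$ property comes from Lemma~\ref{prop:polyn-Gdelta} together with Halmos' $G_\delta$ result for weak mixing, density comes from the Katok--Stepin transformation of Proposition~\ref{prop:ex-Katok} combined with Halmos' Conjugacy Lemma, and convexity comes from Proposition~\ref{rem:prop-of-limits}(d). Your extra details (conjugation invariance, $\Pconst\in\Tlim$ from weak mixing, and upgrading midpoint convexity to full convexity via \wot-closedness of $\Tlim$) simply make explicit what the paper leaves implicit.
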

\begin{proof}
The last assertion follows from Remark \ref{rem:prop-of-limits}(d), while  the $G_\delta$-property (in the first assertion) follows from Lemma \ref{prop:polyn-Gdelta} and the fact that weakly mixing transformations form a $G_\delta$ subset of $\TT$, see, e.g., Halmos \cite[p.~79]{Halmos-book}. 
    
For the density property in the first assertion, take a Katok--Stepin transformation from Proposition \ref{prop:ex-Katok}. By Halmos' Conjugacy Lemma (Proposition \ref{prop:Halmos-conjugacy})
%\cite[p.~77]{Halmos-book}, 
its conjugacy class is dense in $\TT$ and every element of it clearly satisfies the required property. The proof is complete.  
\end{proof}

%{\color{blue}}
Restricting to $L_0^2[0,1]$, by the weak closedness and $T^{-1}$-invariance of $\Tlim$ (see Proposition \ref{rem:prop-of-limits}(a)),  we obtain the following result.

\begin{cor}\label{cor:wlim-typ-transf} 
Transformations $T$ with the following property form a dense $G_\delta$ set in $\mathcal{T}$: 
%For a dense $G_\delta$ set of transformations $T\in\mathcal{T}$, 

The weak limit semigroup $\Tnulllim$ contains all operators of the form $\displaystyle \sum_{n=-\infty}^\infty a_nT_0^n$ with $(a_n)\in[0,1]$ satisfying $\displaystyle \sum_{n=-\infty}^\infty a_n\leq 1$. 
\end{cor}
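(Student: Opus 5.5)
Take the dense $G_\delta$ set from Theorem \ref{thm:generic-transf}, i.e., the weakly mixing $T\in\mathcal{T}$ with $\frac12(I+T^k)\in\Tlim$ for every $k\in\N_0$. I will show that every such $T$ has the stated property.

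\textbf{Step 1: finitely supported sums on $L^2[0,1]$.} Fix such a $T$. By Theorem \ref{thm:generic-transf}, $\Tlim$ is convex and contains $aP_{\text{const}}+a_0I+\dots+a_kT^k$ for nonnegative coefficients with sum one. Proposition \ref{rem:prop-of-limits}(a) says that $\Tlim$ is closed under multiplication by $T^{-1}$, since $T$ is invertible. Multiplying by $T^{-N}$ gives $aP_{\text{const}}+\sum_{n=-N}^{N}a_nT^n\in\Tlim$ for all nonnegative finitely supported $(a_n)$ and $a$ with $a+\sum a_n=1$. Here we use $T^{-N}P_{\text{const}}=P_{\text{const}}$.

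\textbf{Step 2: restriction to $L^2_0[0,1]$.} Every $S\in\Tlim$ leaves $L_0^2[0,1]$ invariant, and its restriction lies in $\Tnulllim$ via the same subsequence. Since $P_{\text{const}}$ vanishes on $L^2_0[0,1]$, the restrictions are exactly $\sum_{n=-N}^N a_nT_0^n$ with $a_n\ge 0$ and $\sum a_n\le 1$; the deficit $1-\sum a_n$ is absorbed by $a$.

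\textbf{Step 3: infinite sums.} Let $(a_n)\subset[0,1]$ with $\sum a_n\le1$. The truncations $\sum_{|n|\le N}a_nT_0^n$ converge to $\sum_n a_nT_0^n$ in operator norm, because $\|T_0^n\|=1$ and the coefficients are summable. In particular they converge weakly. It remains to check that $\Tnulllim$ is weakly closed.

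This is the only slightly delicate point. It follows as in Proposition \ref{rem:prop-of-limits}(a) by a diagonal argument, using metrizability of $\wot$ on the unit ball of $L^2_0[0,1]$. Given $S_N\in\Tnulllim$ converging weakly to $S$, choose for each $N$ an exponent $m_N>m_{N-1}$ with $d(T_0^{m_N},S_N)<1/N$. Then $T_0^{m_N}\to S$ weakly.

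\textbf{Conclusion.} Hence every $T$ in the dense $G_\delta$ set of Theorem \ref{thm:generic-transf} has the required property. The set of all transformations with the property therefore contains a dense $G_\delta$ set.

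To obtain that this set is itself exactly $G_\delta$, use Lemma \ref{prop:polyn-Gdelta}-type reasoning. The property is equivalent to containing the countably many operators $\sum a_nT_0^n$ with rational, finitely supported, nonnegative coefficients of sum $\le1$. This uses weak closedness together with density of such sequences in $\ell^1$. Each of these membership conditions is a $G_\delta$ condition, by the same representation as in \eqref{eq:G-delta-transf} with $T^{-1}$ also allowed. Note that $T\mapsto T^{-1}$ is continuous in the strong$^*$ topology.
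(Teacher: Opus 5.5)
Your proof is correct and follows the paper's route: restrict the conclusion of Theorem \ref{thm:generic-transf} to $L_0^2[0,1]$, then use the $T^{-1}$-invariance and the weak closedness of the weak limit semigroup from Proposition \ref{rem:prop-of-limits}(a). Your explicit check that the property is itself $G_\delta$ fills in a step the paper leaves implicit; it reduces to countably many rational, finitely supported coefficient sequences and uses the continuity of $T\mapsto T^{-1}$.
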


%}
%\begin{proof}
%    By Theorem ..... such transformations form a residual set. The $G_\delta$-property follows analogously as in Lemma .... using $\TTnull$ instead of $\TT$.
%\end{proof}

\begin{remark}%{\color{red}//Shift to Introduction?//}
%It is known as a combination of two results\footnote{namely the Weak Closure Theorem for rank one transformations due to King \cite{King00}, see also Ryzhikov \cite{Ryzhikov07} for an alternative proof, and the genericity of rank one transformations due to Katok and Stepin \cite{KatokStepin67}.} that
As mentioned in the introduction,
typically the weak limit semigroup $\Tlim$ contains all (Koopman operators of) elements from $\mathcal{T}$ commuting with $T$. In particular, it contains all roots of $T$ in $\mathcal{T}$ and all (Koopman operators of) elements of every continuous\footnote{We call a flow $(T_t)$ of measure-preserving transformations on a probability space $(X,\mu)$ \emph{continuous} if it has strongly continuous Koopman semigroup, i.e., if for every $f\in L^2[0,1]$ the map $t\mapsto T_tf$, $\R_+\to L^2[0,1]$ is continuous. In this case,  the Koopman semigroup is strongly continuous in $L^p[0,1]$ for every $p\geq 1$.} 
flow $(T_t)$ embedding $T$. 
Note that such operators are actually strong$^*$ limit points of the powers 
since the strong$^*$ and the weak operator topologies coincide on $\mathcal{T}$. On the other hand, Theorem \ref{thm:generic-transf}  and Corollary \ref {cor:wlim-typ-transf} show that many more operators belong to the weak limit semigroup of a generic transformation which are not isometries and hence do not belong to the strong limit semigroup (see Remark \ref{rem:isom-strong-limit-sgr}). 
%One in particular recovers the result of Stepin that a typical transformation is $[0,1]$-rigid, see Corollary \ref{cor:lambda-rigid-typical}. 
\end{remark}

\section{Typical asymptotics for operators} \label{sec:genericasymptoticsforoperators}

Let $H$ be a separable infinite-dimensional Hilbert space. 
In this section, we study typical properties in the following Polish spaces: the space of unitary operators $\mathcal{U}$ on $H$ endowed with the $\sote$ topology, the space of isometries $\mathcal{I}$ on $H$ endowed with the $\sot$ topology, and the space of contractions $\mathcal{C}$ on $H$ endowed with the $\wot$ topology. 

%{\color{blue}We begin with the following result essentially due Grivaux, Matheron, Menet \cite{GrivauxMatheronMenet22}. The ressiduality property was partially done in }

The following refinement of the typicality of unitary transformations in $\mathcal{I}$ and $\mathcal{C}$ from \cite{E10} is essentially due to Grivaux, Matheron and Menet \cite{GrivauxMatheronMenet22}. We present here two proofs: a direct one inspired\footnote{We thank Ethan Akin for noticing this simplification of the argument in \cite{E10} in the ergodic theory setting of \cite{E25}.} by \cite{E10}, and another one using the points of continuity method based on \cite{GrivauxMatheronMenet22}.
%the details for the sake of completeness.
\begin{thm}[Dense $G_\delta$ property of unitary and isometric operators]\label{thm:unitary-G-delta-in-I-C}
Unitary operators form a dense $G_\delta$ set in the spaces $(\mathcal{I}, \emph{\sot})$ and $(\mathcal{C}, \emph{\wot})$. Moreover, isometric operators form a dense $G_\delta$ set in $(\mathcal{C}, \emph{\wot})$.
\end{thm}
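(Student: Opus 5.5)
The proof splits into a $G_\delta$ part and a density part. For density we use known results: unitaries are $\sot$-dense in $\mathcal{I}$ and $\wot$-dense in $\mathcal{C}$ (see \cite{EisnerSereny08}, \cite{E10}), and $\mathcal{U}\subset\mathcal{I}$. So density of unitaries in $\mathcal{I}$ and $\mathcal{C}$, and of isometries in $\mathcal{C}$, is immediate. The work is to show that each set is $G_\delta$ in the relevant Polish space.

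\emph{Isometries in $(\mathcal{C},\wot)$.} Fix a dense sequence $(x_j)$ in $H$. A contraction $T$ is an isometry if and only if $\|Tx_j\|\geq\|x_j\|$ for all $j$. For a contraction, $\|Tx\|\leq\|x\|$ always holds, so we write
$$
\mathcal{I}=\bigcap_{j,k\in\N}\Big\{T\in\mathcal{C}:\ \|Tx_j\|>\|x_j\|-\tfrac1k\Big\}.
$$
Each set in this intersection is $\wot$-open, because $T\mapsto\|Tx\|=\sup_{\|y\|\le1}|\la Tx,y\ra|$ is a supremum of $\wot$-continuous functions and hence $\wot$-lower semicontinuous. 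The sup may be taken over a countable dense set of $y$'s. So $\mathcal{I}$ is $G_\delta$ in $(\mathcal{C},\wot)$.

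\emph{Unitaries in $(\mathcal{C},\wot)$.} A contraction $T$ is unitary if and only if both $T$ and $T^*$ are isometries. The map $T\mapsto T^*$ is a $\wot$-homeomorphism of $\mathcal{C}$. Therefore $\{T:\ T^*\in\mathcal{I}\}$ is also $G_\delta$, and $\mathcal{U}=\mathcal{I}\cap\mathcal{I}^*$ is $G_\delta$.

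\emph{Unitaries in $(\mathcal{I},\sot)$.} Since $\wot$ and $\sot$ coincide on $\mathcal{I}$, the $\sot$ topology on $\mathcal{I}$ is the subspace topology induced from $(\mathcal{C},\wot)$. Intersecting the $\wot$-$G_\delta$ set $\mathcal{U}$ with $\mathcal{I}$ therefore gives a $G_\delta$ subset of $(\mathcal{I},\sot)$. Concretely, $\mathcal{U}=\bigcap_{j,k}\{T\in\mathcal{I}:\ \|T^*x_j\|>\|x_j\|-\frac1k\}$. For an isometry, $\|T^*x\|=\sup_{\|y\|\le1}|\la x,Ty\ra|$ is lower semicontinuous, which again makes these sets open.

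\emph{Main obstacle.} The only delicate point is making sure the topology is the right one on each space, namely the coincidence of $\wot$ and $\sot$ on $\mathcal{I}$ recalled in Subsection \ref{introsubsecoperators}. After that, density is taken from the literature as above.

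For the alternative proof via points of continuity, in the spirit of \cite{GrivauxMatheronMenet22}, the plan is to consider the identity map $(\mathcal{C},\wot)\to(\mathcal{C},\sote)$. This map is Baire class one, so its points of continuity form a dense $G_\delta$ set. One would then identify these continuity points with the unitaries, using that $\wot$-convergence of contractions to a unitary forces $\sote$-convergence, via \cite[Lemma II.5.11]{E-book} applied to $T$ and $T^*$. I would present this only as a remark.
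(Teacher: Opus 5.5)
Your proof is correct and follows the skeleton of the paper's first proof: density quoted from \cite{EisnerSereny08}, $\mathcal{I}$ described by the conditions $\|Tx_j\|\geq\|x_j\|$, and $\mathcal{U}=\mathcal{I}\cap\mathcal{I}^*$. Two steps are argued differently.

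\emph{Isometries in $(\mathcal{C},\wot)$.} The paper only proves that every isometry lies in the $\wot$-interior of the $\sot$-open set $M_{j,k}=\{T:\|Tx_j\|>(1-\frac1k)\|x_j\|\}$. It does this via the fact that $\wot$-convergence of contractions to an isometry is $\sot$-convergence, and then writes $\mathcal{I}=\bigcap_{j,k}\operatorname{int}M_{j,k}$. You observe that $T\mapsto\|Tx\|=\sup_{\|y\|\leq1}|\la Tx,y\ra|$ is $\wot$-lower semicontinuous. This shows the sets are $\wot$-open outright, which is shorter and does not need that lemma.

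\emph{Unitaries in $(\mathcal{I},\sot)$.} The paper stays inside $\mathcal{I}$, with no adjoints. It uses that isometries have closed range, so unitarity means dense range: $\mathcal{U}=\bigcap_{j,k}\bigcup_l\{T\in\mathcal{I}:\|x_j-Tx_l\|<\frac1k\}$. You instead restrict the $\wot$-$G_\delta$ description from $\mathcal{C}$. For that restriction you do not need the coincidence of $\wot$ and $\sot$ on $\mathcal{I}$, which you single out as the delicate point. Since $\sot$ is finer than $\wot$, relatively $\wot$-open subsets of $\mathcal{I}$ are $\sot$-open, so every $\wot$-$G_\delta$ set is $\sot$-$G_\delta$. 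Likewise, $T\mapsto\|T^*x\|$ is $\wot$-lower semicontinuous on all of $\mathcal{C}$, not just on $\mathcal{I}$.

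\emph{Your remark on the continuity-point route.} The fact that $\wot$-convergence to a unitary forces $\sote$-convergence only shows that $\mathcal{U}$ is contained in the set of continuity points. To conclude that $\mathcal{U}$ itself is $G_\delta$ you also need the reverse inclusion. It follows from $\mathcal{U}$ being $\wot$-dense and $\sote$-closed in $\mathcal{C}$; this is \cite[Prop.~2.11(2)]{GrivauxMatheronMenet22}. The paper's second proof uses it together with separability of $(\mathcal{C},\sote)$.
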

\begin{proof}[First Proof of Theorem \ref{thm:unitary-G-delta-in-I-C} ]
 %One can also verify the $G_\delta$ property of unitaries in $\I$ and of isometries in $\mathcal{C}$ as follows, being a simplified version of the argument in \cite{E10} where the residuality of the corresponding sets was shown. 
 The density was shown in  \cite[Prop.~3.3 and 4.1]{EisnerSereny08}, so we just have to check the $G_\delta$ property.
    
   Let $(x_n)_{n \geq 1}$ be dense in $H$.
    %\setminus\{0\}$. 
    Since the range of every isometry is closed and $(Tx_n)_{n \geq 1}$ is dense in it, we have 
    $$
\U=\bigcap_{j,k \geq 1}\bigcup_{l \geq 1}\left\{T\in \I:\, \|x_j-Tx_l\|<\frac1k\right\}
=:\bigcap_{j,k \geq 1} \bigcup_{l \geq 1} M_{j,k,l}.
     $$
     It is clear that every set $M_{j,k,l}$ is $\sot$-open, showing the $G_\delta$ property of unitary operators in $\I$.

     To show the $G_\delta$ property of isometries in $\mathcal{C}$, observe first that
     $$
     \I=\bigcap_{j,k \geq 1} \left\{T\in\mathcal{C}:\, \|Tx_j\|>\left(1-\frac1k\right)\|x_j\|\right\}=:\bigcap_{j,k \geq 1} M_{j,k}.
     $$
    We show that for every $j,k\in\N$ we have $\I\subset \text{int}M_{j,k}$, the interior of $M_{j,k}$ for the $\wot$ topology. Assume that it is not so and there exists a sequence $(T_n)$ from the complement of $M_{j,k}$ %(which is clearly closed for the strong operator topology) 
     converging weakly to $T\in\I$. Since $T$ is an isometry, $(T_n)$ converges to $T$ strongly and hence $T\notin M_{j,k}$ by the $\sot$-closedness of the complement of $M_{j,k}$, a contradiction to the isometric property of $T$. Thus we have 
      $$
     \I=\bigcap_{j,k \geq1} \text{int}M_{j,k},
     $$
     proving the $G_\delta$ property of isometries in $\mathcal{C}$.

     Finally, to show the $G_\delta$ property of unitary operators in $\mathcal{C}$, observe that
     $$
     \U=\I\cap \I^*,
     $$
     where $\I^*:=\{T\in \mathcal{C}:\,T^*\in \I\}$.
     Since $\I$ is $G_\delta$ in $(\mathcal{C}, \wot)$ by the above, so is $\I^*$ since it is $\wot$-homeomorphic to $\I$ under the map $T\mapsto T^*$. So  $\U$ is a $G_\delta$ subset of $\mathcal{C}$ as the intersection of two such sets.
\end{proof}
\begin{proof}[Second Proof of Theorem \ref{thm:unitary-G-delta-in-I-C}]

    %The proof for $\mathcal{C}$ follows from and goes as follows.
    For topological spaces $X$ and $Y$, a map $f:X\to Y$  is called \emph{of Baire class $1$} (or \emph{Borel $1$}) if the preimage of every open subset of $Y$ is $F_\sigma$ in $X$. By Kechris \cite[Baire's Thm.~24.14]{Kechris-book}, the set of continuity points of a Baire class $1$ map $f:X\to Y$ forms a dense $G_\delta$ subset of $X$ whenever $X,Y$ are metrizable and $Y$ is separable. 

    By Grivaux, Matheron, Menet \cite[Prop.~2.11(2)]{GrivauxMatheronMenet22}, $\mathcal{U}$ coincides with the set of points of continuity of the identity map $\id:(\mathcal{C},\wot)\to(\mathcal{C},\sot^*)$. Moreover, this map is shown to be of Baire class $1$ in \cite[Lemma 2.9]{GrivauxMatheronMenet22}. Finally, $(\mathcal{C},\sot^*)$ is separable by \cite[beginning of Sect.~3]{GrivauxMatheronMenet21b}, see also the simplification\footnote{For our context, just replace $L^2(X,\mu)$ by $H$ and $\mathcal{T}$ by $\mathcal{C}$.} in \cite[Lemma 4.2]{E25}. Thus the assertion for the unitaries in $\mathcal{C}$ follows.

    The case of unitaries in $\mathcal{I}$ can be proved along the same lines for the identity map $\id:(\mathcal{I},\sot)\to(\mathcal{I},\sot^*)$. The same holds for the last assertion and the identity map $\id:(\mathcal{C},\wot)\to(\mathcal{C},\sot)$ using  \cite[Prop.~2.11(1)]{GrivauxMatheronMenet22}. 
\end{proof}

The following is an operator-theoretic analogue of Lemma \ref{prop:polyn-Gdelta} with analogous proof which we leave to the reader.
\begin{lemma}
    \label{prop:polyn-Gdelta-op}
    Let $\mathcal{P}$ be a collection of polynomials. Then  operators $T$ such that the weak limit semigroup $\Tlim$ contains $P(T)$ for every $P\in\mathcal{P}$ form a $G_\delta$ subset in $(\mathcal{U}, \emph{\sote})$ and $(\mathcal{I}, \emph{\sot})$. 
\end{lemma}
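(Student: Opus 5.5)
We mimic the proof of Lemma \ref{prop:polyn-Gdelta}, replacing $\mathcal{T}$ by $\mathcal{U}$ or $\mathcal{I}$. First we may discard every $P\in\mathcal{P}$ whose sup-norm on $\overline{\D}$ exceeds $1$, in the following sense. For every $T\in\mathcal{U}\cup\mathcal{I}$, every weak limit of powers of $T$ is a contraction, and the map $P\mapsto P(T)$ is norm-continuous, with $\|P(T)\|\le \sup_{\overline{\D}}|P|$ by von Neumann's inequality. So if some $P(T)$ fails to be a contraction, then $T$ is simply not in the set, and no special treatment is needed: the set is defined pointwise by the conditions below. We therefore work directly with these conditions and use the metric $d$ inducing $\wot$ on the contractions from Subsection \ref{introsubsecoperators}.

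Polynomials of bounded degree with complex coefficients form a separable metric space under coefficientwise convergence, so $\mathcal{P}$ has a countable subset $\mathcal{P}'$ that is dense for the norm $\sum_j|c_j|$ on coefficients. If $P_m\to P$ in this norm, then $P_m(T)\to P(T)$ in operator norm, uniformly over all contractions $T$. Since $\Tlim$ is $\wot$-closed by Proposition \ref{rem:prop-of-limits}(a), the condition "$P(T)\in\Tlim$ for all $P\in\mathcal{P}$" is equivalent to the same condition for all $P\in\mathcal{P}'$. As in the transformation case, we use that on the bounded set of contractions a $\wot$-limit point of $\{T^n:n\ge N\}$ for every $N$ is exactly an element of $\Tlim$; here metrizability of $\wot$ on bounded sets is used. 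This gives
\begin{equation*}
\bigcap_{k,N\in\N}\ \bigcap_{P\in\mathcal{P}'}\ \bigcup_{n\ge N}\left\{T:\ d\bigl(T^n,P(T)\bigr)<\tfrac1k\right\}.
\end{equation*}
One direction is immediate. For the other, take for each $k$ some $n_k\ge k$ with $d(T^{n_k},P(T))<1/k$; since $n_k\to\infty$, we can pass to a strictly increasing subsequence. The intersection is countable.

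It remains to check that each set $\{T: d(T^n,P(T))<1/k\}$ is open in $(\mathcal{U},\sote)$ and in $(\mathcal{I},\sot)$. The map $T\mapsto T^n$ is $\sot$-continuous on contractions, since multiplication is jointly $\sot$-continuous on bounded sets. Likewise $T\mapsto P(T)=\sum_j c_jT^j$ is $\sot$-continuous, hence $\wot$-continuous. Hence $T\mapsto d(T^n,P(T))$ is continuous for $\sot$, and therefore also for the finer $\sote$ topology. Note that the difference $T^n-P(T)$ has norm at most $1+\sum|c_j|$; the metric $d$ is still well defined and continuous on such bounded sets because of the normalization by $2^{j+l}$.

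The only delicate point is the one mentioned for Theorem \ref{thm:D-rigid-residual}. We need continuity of powers, which holds for $\sot$ but fails for $\wot$; this is exactly why the lemma is stated only for $\mathcal{U}$ and $\mathcal{I}$. With $\sot$-continuity in hand, the argument goes through with no further obstacle.
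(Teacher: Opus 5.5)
Your proof is correct and follows the paper's route. The paper leaves this lemma to the reader as the analogue of Lemma \ref{prop:polyn-Gdelta}: pass to a countable dense $\mathcal{P}'\subseteq\mathcal{P}$ using $\wot$-closedness of $\Tlim$, write the set as $\bigcap_{k,N,P}\bigcup_{n\ge N}\{T: d(T^n,P(T))<1/k\}$, and get openness from $\sot$-continuity of $T\mapsto T^n$. Your use of the $\ell^1$-norm on coefficients, and your direct treatment of polynomials with $\|P(T)\|>1$ via $d$ being defined on bounded sets, are slightly more careful than the paper's ``without loss of generality'' step.
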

%\begin{proof}
%    For  $\mathcal{U}$ and $\mathcal{I}$ the proof goes analogously to the one of Lemma \ref{prop:polyn-Gdelta}. %, whereas the assertion for $\mathcal{C}$ follows then {\color{red} from \cite{E10} (or its stronger version in Theorem \ref{thm:unitary-G-delta-in-I-C}).}
%\end{proof}
%}}
We will now give an operator-theoretic analogue of Theorem \ref{thm:generic-transf} and Co\-rolla\-ry \ref{cor:wlim-typ-transf}.

\begin{thm}\label{thm:generic-op}%[Generic asymptotics in ergodic theory]
For a separable Hilbert space $H$, operators $T$ with the following property form a dense $G_\delta$ set in $(\mathcal{U}, \emph{\sote})$ and $(\mathcal{I},\emph{\sot})$, and 
a comeager set in  
$(\mathcal{C}, \emph{\wot})$:

The weak limit semigroup $\Tlim$ is convex and contains $P(T)$ for all polynomials $P$ with complex coefficients having sum of absolute values $\leq 1$. 
%$\mathcal{U}$, $\mathcal{I}$ and $\mathcal{C}$, respectively.}
\end{thm}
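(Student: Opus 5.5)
The plan is to transfer Theorem \ref{thm:generic-transf} to operators, in the same way that Theorem \ref{thm:D-rigid-residual} transferred Stepin's theorem. Let $\mathcal{G}$ denote the set of $T$ with the stated property.

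\emph{The $G_\delta$ part in $\mathcal{U}$ and $\mathcal{I}$.} The collection $\mathcal{P}$ of complex polynomials whose coefficients have absolute sum $\leq 1$ is fixed. Convexity of $\Tlim$ is automatic once $\frac12(I+T^k)\in\Tlim$ for all $k$, by Proposition \ref{rem:prop-of-limits}(d), and these operators are themselves in $\mathcal{P}$. Hence $\mathcal{G}$ is exactly the set of $T$ with $P(T)\in\Tlim$ for all $P\in\mathcal{P}$. This set is $G_\delta$ by Lemma \ref{prop:polyn-Gdelta-op}.

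\emph{Density in $\mathcal{U}$.} As in the proof of Theorem \ref{thm:D-rigid-residual}, we take $H=L^2_0[0,1]$. We fix $U\in\mathcal{U}$, vectors $x_1,\dots,x_d$ and $\veps>0$. By Proposition \ref{prop:conjugacy-lemma-op} we choose a unitary $S$ with $S^{-1}R_0S$ close to $U$ on the $x_j$, where $R$ is an irrational rotation. By Theorem \ref{thm:generic-transf} (density part), there is a weakly mixing transformation $T$ close to $R$ with $\frac12(I+T^k)\in\Tlim$ for all $k$. Then $S^{-1}T_0S$ is close to $U$, and its weak limit semigroup is the conjugate of $\mathcal{L}_{T_0}$.

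On $L^2_0$ the operator $T_0$ is weakly mixing, so $0\in\mathcal{L}_{T_0}$. Also $\frac12(I+T_0^k)\in\mathcal{L}_{T_0}$, so by Corollary \ref{cor:wlim-typ-transf} and convexity, $\mathcal{L}_{T_0}$ contains every $\sum a_nT_0^n$ with $a_n\ge0$ and $\sum a_n\le 1$.

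\emph{Complex coefficients.} This is the main obstacle. The aim is to show that $\mathcal{G}$ contains all $P(T)$ for complex $P$ with $\sum|a_n|\le1$. I would first try to strengthen the density step using the rotation trick from Theorem \ref{thm:D-rigid-residual}. Consider $V=e^{2\pi i\psi}T_0$ for a.e.\ small $\psi$. For a sequence $n_k$ with $T_0^{n_k}\to \frac12(I+T_0^m)$, we get
$$V^{n_k}\to \tfrac12\bigl(\lambda I+\lambda e^{-2\pi i m\psi}V^m\bigr),$$
where $\lambda$ is any limit point of $e^{2\pi i n_k\psi}$, which can be arbitrary in $\T$ by Kuipers--Niederreiter. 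Together with Proposition \ref{rem:prop-of-limits}(b), products of such elements, and convexity, this should give enough operators. One gets $\mu_1 I$ and $\mu_2 V^m$ combinations with independent phases: multiply by $\lambda' I\in\mathcal{L}_V$ (which holds since $V$ is $\T$-rigid, being $1$-rigid for generic $T$) and by powers $V^{j}$, $V^{-j}$. Closedness under weak limits and convexity then yield every $\sum a_nV^n$ with $\sum|a_n|\le1$.

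One must ensure a single $\psi$ works for the countably many sequences involved. For this I would intersect the countably many full-measure sets of $\psi$ given by Kuipers--Niederreiter, one for each $m$ and for the rigidity sequence. Since $\psi$ can be taken arbitrarily small, $V$ stays close to $U$, which gives density in $\mathcal{U}$.

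\emph{Passing to $\mathcal{I}$ and $\mathcal{C}$.} Density in $\mathcal{I}$ follows from density of $\mathcal{U}$ in $\mathcal{I}$ \cite{E10}. For $\mathcal{C}$, Theorem \ref{thm:unitary-G-delta-in-I-C} shows that $\mathcal{U}$ is dense $G_\delta$ in $(\mathcal{C},\wot)$, and $\wot=\sote$ on $\mathcal{U}$. So a dense $G_\delta$ subset of $\mathcal{U}$ is comeager in $\mathcal{C}$, exactly as at the end of the proof of Theorem \ref{thm:D-rigid-residual}.
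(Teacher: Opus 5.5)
Your $G_\delta$ argument and the passage to $\mathcal{I}$ and $\mathcal{C}$ are fine. The complex-coefficient step, however, has a genuine gap: the operators you construct do not have the property at all. Conjugation by $S$ plays no role, so consider $V=e^{2\pi i\psi}T_0$ with $T_0$ weakly mixing. Every $T_0^n$ maps real functions to real functions, and hence so does every weak limit of powers of $T_0$. Pass to a subsequence along which both $e^{2\pi i n_k\psi}$ and $T_0^{n_k}$ converge. You then see that every element of $\mathcal{L}_V$ has the form $\lambda W$ with $\lambda\in\T$ and $W$ real.

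Now take $P(V):=\frac12\bigl(I+ie^{-2\pi i\psi}V\bigr)=\frac12(I+iT_0)$, whose coefficients have absolute sum $1$. Suppose $\frac12(I+iT_0)=\lambda W$ with $\bar\lambda=a+ib$. Taking imaginary parts on real $f\in L^2_0[0,1]$ gives $bf+aT_0f=0$ for all such $f$. So either $T_0$ has an eigenvalue, or $a=0$, $|b|=1$ and every real $f$ vanishes. Both are impossible for weakly mixing $T_0$. Hence $P(V)\notin\mathcal{L}_V$ for every $\psi$.

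In particular, $\mathcal{L}_V$ is not convex, since for a.e.\ $\psi$ it contains $\lambda I$ and $\lambda V$ for all $\lambda\in\T$. Your own bookkeeping reflects this. Multiplying by $\lambda' I$ or by $V^{\pm j}$, taking products and taking weak limits only ever produces operators $\lambda Q(T_0)$ with a \emph{single} global phase $\lambda$ and $Q$ having non-negative coefficients. The phases of different powers are never decoupled. The convexity you invoke at the end would need $\frac12(I+V^m)=\frac12(I+e^{2\pi i m\psi}T_0^m)\in\mathcal{L}_V$ in Proposition~\ref{rem:prop-of-limits}(d). By the same argument this fails whenever $e^{2\pi i m\psi}\notin\R$.

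The missing idea is to combine the two properties by Baire category in $(\mathcal{U},\sote)$, rather than trying to realize both in one explicitly rotated operator.
\begin{itemize}
\item The set of $U\in\mathcal{U}$ with $Q(U)\in\mathcal{L}_U$ for all polynomials $Q$ with non-negative coefficients of sum $\le 1$ is $G_\delta$ by Lemma~\ref{prop:polyn-Gdelta-op}. It is dense because it contains the conjugacy class of $T_0$, which is dense by Proposition~\ref{prop:conjugacy-lemma-op} since $\sigma(T_0)=\T$ (your own approximation argument also gives this).
\item The $\overline{\D}$-rigid unitaries form a dense $G_\delta$ set by Theorem~\ref{thm:D-rigid-residual}.
\end{itemize}
Take $U$ in the intersection. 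Then $\mathcal{L}_U$ is convex by Proposition~\ref{rem:prop-of-limits}(d). It also contains $0$ and $\lambda U^k$ for all $\lambda\in\T$ and $k\ge 0$, since $\lambda I\in\mathcal{L}_U$ and $\mathcal{L}_U$ is closed under multiplication by $U$. Writing $a_k=|a_k|\lambda_k$, the operator $P(U)$ is a convex combination of the $\lambda_kU^k$ and $0$, so it lies in $\mathcal{L}_U$.

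This is how the paper proceeds. The rotation trick is used only inside Theorem~\ref{thm:D-rigid-residual}, where it is harmless because a phase multiplying $rI$ just produces another scalar multiple of $I$.
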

\begin{proof}
We have to show 
the assertion for $\mathcal{U}$ and $\mathcal{I}$. The assertion for $\mathcal{C}$ follows then from Theorem \ref{thm:unitary-G-delta-in-I-C}. 

By Lemma  \ref{prop:polyn-Gdelta-op}, operators having the required polynomials in the weak limit semigroup $\Tlim$ form a $G_\delta$ subset of $\I$ and $\U$, and for every  such operator $\Tlim$ is automatically convex by Remark \ref{rem:prop-of-limits}(d). 

To show the density, we assume without loss of generality that $H=L_0^2[0,1]$ and 
take the Koopman operator $T$ of a transformation satisfying the assertion of Theorem \ref{thm:generic-transf}. Since weak mixing implies ergodicity we have  $\sigma(T)=\T$, see, e.g., \cite[Prop.~3.32]{EF-book} or Proposition \ref{lemmespectreisometriesposbijaperiodic}. 
Thus $\sigma(T_0)=\T$, hence the conjugacy class of $T_0$ is dense in $\mathcal{U}$ by Proposition \ref{prop:conjugacy-lemma-op}, and every element $S$ of this conjugacy class satisfies 
\begin{align}
P(S) &\in \mathcal{L}_S, \; \forall \text{ polynomial } P \text{ with non-negative coefficients} \label{eq:posP-in-wlimsgr} \\
&\hspace{3.5em}\text{having sum } \le 1. \notag
\end{align}
Moreover, this property is $G_\delta$ in $\mathcal{U}$ by Lemma \ref{prop:polyn-Gdelta-op}. This shows that operators satisfying \eqref{eq:posP-in-wlimsgr} form a dense $G_\delta$ set in $\mathcal{U}$. Thus by Theorem \ref{thm:D-rigid-residual} there exists a $\overline{\D}$-rigid operator $U\in\mathcal{U}$ satisfying \eqref{eq:posP-in-wlimsgr}. Moreover, by typicality of unitary operators with full spectrum, see \cite{EisnerMatrai13}, we can assume without loss of generality that $\sigma(U)=\T$. In particular, $\lambda U^k\in \mathcal{L}_U$ holds for every $\lambda\in \overline{\D}$ and every $k\in\N_{0}$ by Proposition \ref{rem:prop-of-limits}(a). The convexity of $\mathcal{L}_U$ implies now that $P(U)\in \mathcal{L}_U$ for every complex polynomial $P$ with coefficients having sum of absolute values $\leq 1$. By Proposition \ref{prop:conjugacy-lemma-op}, the conjugacy class of $U$ is dense in $\mathcal{U}$, and every element of this class has the desired property, showing the density property in $\mathcal{U}$.
%
%observe that a typical Koopman transformation $T$ is ergodic (since weakly mixing) and hence satisfies $\sigma(T)=\T$, see, e.g.,  \cite[Prop.~3.32]{EF-book}. Thus take an ergodic transformation satisfying the desired property from Theorem \ref{thm:generic-transf}. Since both this property and the full spectrum property are preserved by unitary equivalence, 
%every Koopman operator $T$ from Theorem \ref{thm:generic-transf} is ergodic and hence satisfies $\sigma(T)=\T$ implying $\sigma(T_0)=\T$. Moreover, the weak limit semigroup $\Tnulllim$ contains all polynomials $P(T_0)$ of $T_0$ with non-negative coefficients having sum $\leq 1$. Since $L_0^2[0,1]$ is isomorphic to $H$, we obtain a unitary operator $U$ on $H$ with $\sigma (U)=\T$ satisfying the required property. Since this property is clearly conjugacy-invariant, Proposition \ref{prop:conjugacy-lemma-op} implies the density property in $\mathcal{U}$. 
Since  $\mathcal{U}$ is dense in $\mathcal{I}$ (see \cite{E10}), we also have the density in $\mathcal{I}$.
%
%The assertion for $\mathcal{C}$ again follows from the residuality of $\mathcal{U}$ in $\mathcal{C}$ from \cite{E10}. 
%Since $\mathcal{U}$ is residual in $\mathcal{C}$ by \cite{E10} and the weak and the strong$^*$ topologies coincide on $\mathcal{U}$, every set which is residual in $\mathcal{U}$ is also residual in $\mathcal{C}$. This shows the assertion for  $\mathcal{C}$.
%
%The last assertion follows from residuality of $\mathcal{U}$ in $\mathcal{I}$ and $\mathcal{C}$, the weak closedness of the weak limit semigroup of every $T$ and its invariance under $T^*=T^{-1}$ for unitary $T$ as observed in Remark \ref{rem:prop-of-limits}.
\end{proof}

\begin{cor} \label{cortypweaklimsemigroupinfinitesums}
 Unitary operators $T$ with the weak limit semigroup containing all operators of the form $\displaystyle \sum_{n=-\infty}^\infty a_nT^n$ with $(a_n)_{n \in \Z} \subseteq \C$ satisfying $\displaystyle\sum_{n=-\infty}^\infty |a_n|\leq 1$ form a dense $G_\delta$ set in $(\mathcal{U}, \emph{\sote})$ and $(\mathcal{I}, \emph{\sot})$, and 
a comeager set in  
$(\mathcal{C}, \emph{\wot})$.
\end{cor}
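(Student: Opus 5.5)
The plan is to derive the corollary from Theorem \ref{thm:generic-op}, together with the weak closedness of $\Tlim$ and its invariance under multiplication by $T^{-1}$ from Proposition \ref{rem:prop-of-limits}(a). Let $\mathcal{G}$ denote the set from Theorem \ref{thm:generic-op}, intersected with $\mathcal{U}$. This intersection is still dense $G_\delta$ in $(\mathcal{U},\sote)$ and in $(\mathcal{I},\sot)$, since $\mathcal{U}$ is dense $G_\delta$ in $\mathcal{I}$ by Theorem \ref{thm:unitary-G-delta-in-I-C}. It is comeager in $(\mathcal{C},\wot)$ for the same reason.

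First I show that every $T\in\mathcal{G}$ has the required property. Let $(a_n)_{n\in\Z}$ satisfy $\sum|a_n|\le 1$. For $N\in\N$ consider
$$S_N:=\sum_{n=-N}^N a_nT^n = T^{-N}P_N(T),\qquad P_N(z):=\sum_{n=-N}^N a_n z^{n+N}.$$
Here $P_N$ is a polynomial whose coefficients have absolute values summing to at most $1$. Hence $P_N(T)\in\Tlim$ by Theorem \ref{thm:generic-op}. Applying Proposition \ref{rem:prop-of-limits}(a) $N$ times (closedness under multiplication by $T^{-1}$, since $T$ is unitary) gives $S_N\in\Tlim$. Because $\|T^n\|=1$, the partial sums $S_N$ converge in operator norm to $\sum_n a_nT^n$. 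In particular they converge in $\wot$, so the limit lies in $\Tlim$ by the weak closedness in Proposition \ref{rem:prop-of-limits}(a).

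The remaining issue is the $G_\delta$ property of the set $\mathcal{A}$ of unitaries with this property. I expect this to be the only slightly delicate point, since $\mathcal{A}$ may in principle be larger than $\mathcal{G}$. I would avoid it by observing that $\mathcal{A}\cap\mathcal{U}$ coincides with $\mathcal{G}$. Indeed, every polynomial with coefficients of absolute sum at most $1$ is a special case of such a series, so $\mathcal{A}\subseteq\mathcal{G}$ on unitaries, and the previous paragraph gives the reverse inclusion. Convexity comes for free on $\mathcal{A}$ by Proposition \ref{rem:prop-of-limits}(d), since it contains $\frac12(I+T^n)$. Therefore $\mathcal{A}=\mathcal{G}\cap\mathcal{U}$, and it inherits the dense $G_\delta$ property in $\mathcal{U}$ and $\mathcal{I}$ and comeagerness in $\mathcal{C}$ from Theorem \ref{thm:generic-op} and Theorem \ref{thm:unitary-G-delta-in-I-C}.
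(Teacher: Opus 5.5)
Your proposal is correct and follows essentially the same route as the paper: you reduce to Theorem \ref{thm:generic-op}, shift by $T^{-N}$ using invariance of $\Tlim$ under $T^{-1}$ (Proposition \ref{rem:prop-of-limits}(a)), pass to the norm (hence $\wot$) limit by weak closedness, and transfer the result to $\mathcal{I}$ and $\mathcal{C}$ via Theorem \ref{thm:unitary-G-delta-in-I-C}. Your observation that the set in question coincides with the set from Theorem \ref{thm:generic-op} (using Proposition \ref{rem:prop-of-limits}(d) for convexity) usefully makes explicit the $G_\delta$ claim that the paper leaves implicit.
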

\begin{proof}
   The assertion for $\mathcal{U}$ follows directly from Theorem \ref{thm:generic-op} using
the weak closedness of the weak limit semigroup and its invariance under $T^*=T^{-1}$ for unitary $T$ as observed in Proposition \ref{rem:prop-of-limits}. The assertion for $\mathcal{I}$ and $\mathcal{C}$ follows then using Theorem \ref{thm:unitary-G-delta-in-I-C}.
\end{proof}

\section{Typical asymptotics for positive operators} \label{Section-genericposoperators}
This section is motivated by the fact that Koopman operators associated to invertible measure-preserving transformations are positive, in the sense that they map non-negative functions of $L^2(X,\mu)$ to non-negative functions of $L^2(X,\mu)$. We consider here positive contractions on the complex space $L^p[0,1]$ with $1 \leq p < \infty$ equipped with the Lebesgue measure $m$. 
Let us recall that for every $1 \leq p < \infty$, we denote by $\Cp$, $ \Ip$, and $\Up$ respectively the positive contractions, positive isometries, and positive surjective isometries of $L^p[0,1]$. We start by recalling some well-known facts about positive isometries of $L^p[0,1]$.

\subsection{Positive isometries of $\mathbf{L^p[0,1]}$ with $\mathbf{1 \leq p < \infty}$} \label{subsectionposisoLp}\par\noindent

A complete description of the positive isometries of $L^p[0,1]$, $1 \leq p < \infty$, is provided by Lamperti's Theorem \cite{Lamperti58}; see Royden's book \cite[Chapter 15, Section 7]{Royden-book} for the proof. These are namely the maps $T_\tau^{(p)} : L^p[0,1] \to L^p[0,1]$ given by $T_\tau^{(p)} f = h \cdot (f \circ \tau)$, where $\tau$ is a Borel measurable map from $[0,1]$ onto (almost all of) $[0,1]$ and $h \in L^p[0,1]$ is a non-negative function satisfying $\int_{\tau^{-1}(A)} h^p dm = \int_A dm$ for every Borel subset $A$ of $[0,1]$. The function $h$ is uniquely determined (up to a.e equivalence), and $\tau$ is uniquely determined (up to a.e equivalence) on the set where $h \ne 0$. Moreover, if the positive isometry is surjective, then $\tau$ is bijective (except on a set 
%(up to sets 
of measure $0$), $\tau^{-1}$ is measurable, $\tau$ and $\tau^{-1}$ take sets of measure $0$ into sets of measure $0$, and $h = (\frac{d m \circ \tau}{dm})^{1/p}$. 

From now on, we denote by $\omega_\tau$ the Radon-Nikodym derivative $\omega_\tau := \frac{d m\circ \tau}{dm}$, and such invertible transformations $\tau$ for which $m(A) = 0$ imply $m(\tau^{-1}(A)) = m(\tau(A)) = 0$ for every Borel subset $A$ are called \emph{non-singular transformations}. The set of non-singular transformations will be denoted by $\GG$. In particular, if $\tau \in \GG$ is measure-preserving, then $\wTau = \omega_{\tau^{-1}} = 1$. The map $\xi : \tau \mapsto T_\tau^{(p)}$ is a one-to-one map from $\GG$ onto $\Up$. We give $\GG$ the $p$-coarse topology defined as the $\sot$ topology on the corresponding invertible isometries of $L^p[0,1]$. 
%This topology is Polish and a compatible distance is given by $$\rho(\tau, \sigma) = \displaystyle \sum_{n \geq 1} 2^{-n} \lVert T_\tau^{(p)} \indic_{E_n} - T_\sigma^{(p)} \indic_{E_n} \rVert_p$$ 
%for every $\tau, \sigma \in \GG$, where $(E_n)_{n \geq 1}$ is an algebra generating the Borel subsets of $[0,1]$. 
It is known that the $p$-coarse topologies for $1 \leq p < \infty$ all coincide on $\GG$ (see \cite[Thm. 8]{ChoksiKakutani79}). Moreover, the set $\mathcal{T}$ is easily seen %\footnote{\color{blue}Indeed, $T\in \GG$ is Koopman if and only if it satisfies $T\Eins=\Eins$.} 
to be closed in $\GG$ in the weak topology. For more details about these topologies and the group of non-singular transformations, we refer to Choksi and Kakutani \cite{ChoksiKakutani79}, Ionescu \cite{Ionescu65} and Nadkarni \cite{Nadkarni-book}.

An important tool to prove density results in the class of non-singular transformations is the following analogue of Halmos' Conjugacy Lemma (Proposition \ref{prop:Halmos-conjugacy}).

\begin{proposition}[{\cite[Theorem 2]{ChoksiKakutani79}}] \label{lemma-conjugacynonsingular}
    For each aperiodic non-singular transformation $\tau \in \GG$, the conjugacy class $\{ \sigma^{-1} \tau \sigma : \sigma \in \GG \}$ is dense in $\GG$ in the coarse topology.
\end{proposition}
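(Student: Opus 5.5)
The plan is to adapt Halmos' proof of Proposition \ref{prop:Halmos-conjugacy}. The extra freedom in the non-singular setting helps rather than hurts: a conjugating map $\sigma\in\GG$ does not have to preserve $m$, so it can send any set of positive measure onto any other such set. Fix $\rho\in\GG$ and a basic coarse neighbourhood $\{\psi:\ \lVert T^{(p)}_\psi \indic_{E_i}-T^{(p)}_\rho\indic_{E_i}\rVert_p<\veps,\ i\le r\}$ of $\rho$, where the $E_i$ are dyadic intervals. We will find $\sigma\in\GG$ with $\sigma^{-1}\tau\sigma$ in this neighbourhood. Since the $p$-coarse topologies coincide (Choksi--Kakutani), we may work with $p=1$. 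There $\lVert T^{(1)}_\psi \indic_E\rVert_1=m(\psi^{-1}E)$, and two maps that agree off a set $D$ give images of $\indic_E$ differing by at most $2\,m(\psi^{-1}(E)\cap D)$-type terms. This reduces everything to controlling measures of small exceptional sets.

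\textbf{Step 1 (cyclic approximation of $\rho$).} We show that $\rho$ can be approximated in the coarse topology by a map $\rho'$ of the following kind. There are a height $h$ and disjoint sets $A_1,\dots,A_h$ of positive measure covering $[0,1]$ such that $\rho'$ maps $A_j$ onto $A_{j+1}$ for $j<h$ and maps $A_h$ onto $A_1$. Each such restriction is non-singular, and $m(A_h)$ is as small as we like.

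To get this, first take a fine partition $\{P_k\}$, refining the $E_i$, on which $\rho$ is approximately described. So $\rho^{-1}$ approximately maps $P_k$ into unions of pieces, with Radon--Nikodym derivative nearly constant on each piece; this uses absolute continuity of $\omega_\rho$ and the Lebesgue density theorem. Replace $\rho$ by a map $\rho_1$ that permutes finitely many pieces $Q_1,\dots,Q_N$ via some permutation $\pi$, with each piece mapped onto its image linearly (constant derivative). Then merge the cycles of $\pi$ into one long cycle, exactly as in the measure-preserving Rokhlin--Halmos argument. Split each piece into many small subpieces, route the cycles through each other, and redirect only on a set of tiny measure. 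Because measures of pieces need not match, these redirections are done by arbitrary non-singular bijections between subpieces, and that is harmless. We expect this step, namely controlling the coarse distance after cutting and re-gluing, to be the main technical obstacle. The key estimate is that the changed set has small measure and that its preimages under both maps have small measure, which follows from absolute continuity of $m\circ\rho$ with respect to $m$.

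\textbf{Step 2 (Rokhlin tower for $\tau$).} Since $\tau$ is aperiodic and non-singular, the Rokhlin lemma (valid for non-singular aperiodic maps) gives a base $B$ such that $B,\tau B,\dots,\tau^{h-1}B$ are disjoint and their union has measure greater than $1-\delta$. Choose $\delta$ so small that the corresponding error sets are negligible in the sense of Step 1.

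\textbf{Step 3 (building $\sigma$).} Fix a non-singular bijection $\sigma_0: A_1\to B$. On $A_{j+1}$, for $j<h$, set $\sigma:=\tau^{j}\sigma_0\rho'^{-j}$, which maps $A_{j+1}$ onto $\tau^jB$. The remainder of $[0,1]$ has small measure, and we map it non-singularly onto the complement of the tower after shrinking $A_h$ slightly if needed. With this definition, $\sigma^{-1}\tau\sigma=\rho'$ on $A_1\cup\dots\cup A_{h-1}$. The two maps can differ only on $A_h$ and on the small leftover set, both of small measure, as are their images under $\rho'$. By the estimate at the beginning, $\sigma^{-1}\tau\sigma$ is coarse-close to $\rho'$, and hence to $\rho$. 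This gives density of the conjugacy class.
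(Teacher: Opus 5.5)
The paper gives no proof of this statement; it quotes Choksi--Kakutani. So I am judging your argument on its own merits. Your overall scheme is the right Halmos-type one: approximate $\rho$ by a map with a single cycle of small sets, take a tower of the same height for $\tau$, and conjugate level by level. Step~3 and the cycle-merging half of Step~1 can be made to work. The genuine gap is the first half of Step~1: nothing you describe actually produces the finite permutation $\rho_1$.

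\textbf{Why Step~1 fails as written.} A map that permutes the pieces of one finite partition with constant derivatives has two rigid features. Its derivatives multiply to $1$ around every cycle of pieces. It also preserves the equivalent probability measure with density proportional to $1/m(Q_j)$ on $Q_j$. One-step data of $\rho$ on a fine partition (which cell goes where, with which derivative) cannot produce this.

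Take the totally dissipative map $\rho(x)=x^2$. On a partition of mesh $\delta$, every cell away from the endpoints is sent strictly downward. So any $\rho_1$ that follows $\rho$ cell by cell has no closed cycle through the middle. A coarse approximant must instead track $\rho$-orbits for many steps and then jump back up on a set of small image measure, with a derivative unrelated to $\omega_\rho$. Building that is a global orbit-cutting construction. Absolute continuity and Lebesgue density do not supply it.

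In the measure-preserving case, Halmos gets dyadic permutations from a matching argument for the doubly stochastic matrix $m(P_k\cap\rho^{-1}P_l)$. That argument has no non-singular analogue, because the cycle constraint on derivatives is global. This is exactly where the non-singular case is harder. The step you flagged as the main obstacle is actually routine.

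\textbf{How to fill it.} The missing ingredient is the density of periodic non-singular transformations in the coarse topology (Ionescu-Tulcea, Chacon--Friedman), which the paper quotes just before Proposition~\ref{proptypposcontractionwotrigid}. Given a periodic $\rho_p$ close to $\rho$:
\begin{itemize}
\item Set it equal to the identity on the small invariant set of points of period $>n_0$.
\item For each $n\le n_0$, pick a fundamental domain $F_n$ of the period-$n$ part.
\item Partition $F_n$ by pulling back along $\rho_p^j$, $0\le j<n$, a finite partition that refines the $E_i$ and on which $\omega_{\rho_p}$ is $L^1$-close to a step function $s$.
\item Replace $\rho_p$ on each piece $Z$ by a constant-derivative bijection onto $\rho_p(Z)$, closing each cycle exactly.
\end{itemize}
The resulting error on each $\indic_{E_i}$ is at most $2\lVert\omega_{\rho_p}-s\rVert_1$.

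From there your merging works, provided you subdivide proportionally. Then the rewiring inside a cycle keeps the old constant derivative and costs nothing. Arbitrary bijections are needed only for the joins between cycles, on a set of image measure $O(1/s)$. Note that the within-cycle rewiring would not be harmless if done by arbitrary bijections.

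\textbf{Smaller corrections.}
\begin{itemize}
\item $\lVert T^{(1)}_\psi\indic_E\rVert_1=m(E)$, not $m(\psi^{-1}E)$.
\item For bijections $\psi,\rho'$ agreeing off $D$, the error on $\indic_E$ is bounded by $m(\psi D\cap E)+m(\rho' D\cap E)$. These are \emph{images}, not preimages, and $\psi(D)=\rho'(D)$ automatically.
\item In Step~3, shrinking $A_h$ does not work as stated, because the restricted formula is no longer onto $\tau^{h-1}B$. Instead, let $\sigma$ map $A_h$ by any non-singular bijection onto $\tau^{h-1}B\cup R$, where $R$ is the complement of the tower. Then $\sigma^{-1}\tau\sigma=\rho'$ on $A_1\cup\dots\cup A_{h-2}$, and the error is at most $2m(A_1\cup A_h)$.
\item So you need $m(A_1)$ small as well as $m(A_h)$; your merged cycle provides both. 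With this fix the size of $R$ is irrelevant: disjointness of $B,\dots,\tau^{h-1}B$ with $m(B)>0$ suffices.
\end{itemize}
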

\enlargethispage{2\baselineskip}

The aim of this section is now to study the weak limit semigroup of a typical positive contraction/positive isometry/positive invertible isometry of $L^p[0,1]$ for $1 \leq p < \infty$. One can easily show that a typical positive contraction $T \in (\Cp, \sot)$ is strongly stable for every $1 \leq p < \infty$ (see, for instance, \cite[Lemma 5.9]{EisnerMatrai13}). We will thus work with the \wot\, topology on $\Cp$ when $1 < p < \infty$, as well as with \sot\, on $\mathcal{I}_p$ and $\U_p$.
\subsection{Positive operators on $L^p[0,1]$ with $p > 1$}\par\noindent

We start this subsection with the following important result from Grz\k{a}\'slewicz.

\begin{theorem}[{\cite[Thm. 2]{Gralewicz90}}] \label{thWOTdenseposisometriesbijgralew}
    For every $1 < p < \infty$, the set of positive invertible isometries of $L^p[0,1]$ is dense in $(\Cp, \emph{\wot})$.
\end{theorem}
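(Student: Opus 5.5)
The plan is to show that every positive contraction $T$ on $L^p[0,1]$ is a $\wot$-limit of positive invertible isometries. The route is a discretisation: first approximate $T$ by a positive contraction that lives on a finite dyadic partition, then realise that finite-dimensional object as a weak limit of positive invertible isometries built from finite permutations and scalings of fine subintervals.

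\textbf{Step 1: reduction to finite dimensions.} For $n\in\N$ let $E_n$ be the conditional expectation onto the $\sigma$-algebra generated by the dyadic intervals $I_1,\dots,I_{2^n}$ of length $2^{-n}$. It is a positive contraction on every $L^p$, and $E_n\to I$ strongly as $n\to\infty$. Hence $E_nTE_n\to T$ in $\wot$, and each $E_nTE_n$ is still a positive contraction. Since $\wot$ is metrizable on bounded sets, it suffices to approximate operators of the form $S=E_nTE_n$. Such an $S$ is given by a $2^n\times 2^n$ matrix with nonnegative entries acting on step functions. After normalising to $\ell^p$ with weights $2^{-n}$, this matrix is a positive contraction of $\ell^p_{2^n}$.

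\textbf{Step 2: the finite-dimensional problem.} I now need positive contractions of $\ell^p_N$ to be approximable, in a suitable sense, by positive operators that are "convex combinations of weighted permutations" after splitting. I plan to use the classical fact, essentially an $L^p$-version of Birkhoff's theorem, that positive contractions of $\ell^p_N$ can be dominated by and approximated through doubly substochastic-type structures. Concretely, I will write $S=\sum_k c_k D_k P_k$, or at least approximate it by such a sum, where the $P_k$ are permutations, the $D_k$ are positive diagonal matrices, and the family is compatible with $\ell^p$-contractivity.

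\textbf{Step 3: realising $S$ as a weak limit.} Each interval $I_i$ is split into $M$ tiny subintervals, and these are distributed among the terms of the decomposition in proportion to the weights $c_k$. On each piece I use a non-singular map, namely an affine rescaling that sends small pieces of $I_i$ onto small pieces of $I_j$. This yields a positive invertible isometry $T^{(p)}_\tau$ by Lamperti's description. If the pieces are interleaved finely, for example periodically at scale $1/M$, then as $M\to\infty$ the isometries converge weakly to the averaged operator. This is a Riemann–Lebesgue-type averaging effect: $\langle T^{(p)}_{\tau_M}\indic_{I_j},\indic_{I_i}\rangle$ tends to the prescribed matrix entry. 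To adjust masses (the weights $h$), I let the rescaling ratios vary, which produces the required $h=\omega_\tau^{1/p}$. The conservation constraint $\int h^p=$ measure is exactly what forces the entries to satisfy $\ell^p$-compatibility.

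\textbf{Main obstacle.} I expect Step 2/3 to be the hard part. The difficulty is a combinatorial–measure-theoretic matching: I must show that every positive contraction matrix on $\ell^p_N$ arises as a weak limit of such interleaved weighted transport maps. For $p=2$ this is like Halmos-type dilation. For general $p$ the constraint is nonlinear. My first attempt will be to reduce to extreme points of the set of positive contractions, handling the case of $1\times1$ scaling blocks and permutations directly. If that proves too weak, I will fall back on Grząślewicz's characterisation of the extreme points. Strictly sub-isometric parts are handled by sending mass to fast-oscillating or rescaled pieces whose weak contribution vanishes; for example, compressing an interval onto a tiny set with a large density factor yields a weak limit of $0$ in $L^p$ for $p>1$. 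This is where $p>1$ is used.
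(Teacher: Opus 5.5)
\paragraph{The gap: Step 2 is left open, and the fact it cites does not hold.} The paper gives no proof of this statement; it quotes it from \cite[Thm.~2]{Gralewicz90}, so your argument has to stand on its own. Step 1 is fine; in fact $E_nTE_n\to T$ strongly. Step 3 is a sensible skeleton. The substance of the theorem, however, sits in Step 2, which you leave open. What Step 3 actually needs is a factorization $a_{ij}=\rho_{ij}w_{ij}$ of the dyadic matrix $A=(a_{ij})$ of $E_nTE_n$. Here $\rho_{ij}\ge 0$ is the relative density of the part of $I_i$ sent affinely into $I_j$, and $w_{ij}$ is the value of $\omega_\tau^{1/p}$ there. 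Two constraints must hold: $\sum_j\rho_{ij}\le 1$ (you cannot use more than all of $I_i$) and $\sum_i\rho_{ij}w_{ij}^p\le 1$ (the images must fit into $I_j$; this is the Lamperti mass constraint). The ``$L^p$-Birkhoff'' fact you invoke does not provide this, under either reading:
\begin{itemize}
\item If your $D_k$ have entries in $[0,1]$, then $\sum_k c_kD_kP_k$ only produces doubly substochastic matrices, and these do not exhaust the positive contractions of $\ell^p_N$. For example, $f\mapsto 2^{1/p}\bigl(\int_{[0,1]}f\,dm\bigr)\indic_{[0,1/2)}$ is a positive contraction of $L^p[0,1]$. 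Its level-one dyadic matrix has first row $(2^{-1/p'},2^{-1/p'})$, whose sum is $2^{1/p}>1$.
\item If the $D_k$ are unrestricted, the decomposition is trivial: split $A$ along its $N$ cyclic diagonals. Then ``compatible with $\ell^p$-contractivity'' is exactly the pair of constraints above, i.e.\ the finite-dimensional core of the theorem, which remains unproved.
\end{itemize}
The extreme-point fallback has the same problem. Passing from extreme points to their convex hull requires precisely this kind of interleaving with mass bookkeeping.

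\paragraph{How to close it: a converse to Schur's test.} Replace $A$ by $(A+\varepsilon J)/\max(1,\|A+\varepsilon J\|)$, where $J$ is the all-ones matrix, so that all entries are positive. Take a maximiser $x\ge 0$ of $\|Ax\|_p/\|x\|_p$.
\begin{itemize}
\item All coordinates of $x$ are positive. Raising a zero coordinate by $t$ increases $\|Ax\|_p^p$ at order $t$ but $\|x\|_p^p$ only at order $t^p$; this uses $p>1$.
\item The Lagrange condition then reads $A^{T}(Ax)^{p-1}=\|A\|^p x^{p-1}$ entrywise.
\item Put $y=Ax$, $\rho_{ij}=a_{ij}x_j/y_i$ and $w_{ij}=y_i/x_j$. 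Then $\rho_{ij}w_{ij}=a_{ij}$, $\sum_j\rho_{ij}=1$, and $\sum_i\rho_{ij}w_{ij}^p=x_j^{1-p}(A^{T}y^{p-1})_j=\|A\|^p\le 1$.
\item The leftover measure $(1-\|A\|^p)\,m(I_j)$ in each $I_j$ is filled from a set of density $\eta$, taken off the domain, carrying a large weight. By H\"older its contribution to $\langle T_\tau\indic_B,\indic_C\rangle$ is $O(\eta^{1/p'})$; this is where $p>1$ genuinely enters.
\end{itemize}
Separately, in Step 3 the image pieces must be equidistributed inside each $I_j$ at a scale tending to $0$, not only the domain pieces inside $I_i$. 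Otherwise you control only $\langle T_{\tau_M}\indic_{I_j},\indic_{I_i}\rangle$, not \wot-convergence to $E_nTE_n$ against finer test functions.
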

We will deduce from Theorem \ref{thWOTdenseposisometriesbijgralew} that, for every $1 < p < \infty$, the sets $\I_p$ and $\U_p$ are dense $G_\delta$ in $(\Cp,\wot)$ and that $\U_p$ is dense $G_\delta$ in $(\I_p, \sot)$. 
In order to prove it, we will make use of points of continuity of the identity maps. 

Let $1 < p < \infty$. Given two topologies $\tau_1$ and $\tau_2$ on $\Cp$ among \wot, \sot\, and \, \sote, we denote by $\Contp(\tau_1, \tau_2)$ the set of points of continuity of the identity map from $(\Cp, \tau_1)$ onto $(\Cp, \tau_2)$. It is well known that for every $1 < p < \infty$, the sets $\Contp(\wot, \sot)$ and $\Contp(\wot, \sote)$ are \wot-dense $G_\delta$ in $\Cp$ (see, for instance, \cite[Cor. 2.10]{GrivauxMatheronMenet22}). Moreover, if $D \subset \Cp$ is $\tau_1$-dense and $\tau_2$-closed, then $\Contp(\tau_1, \tau_2) \subset D$ (\cite[Lemma 2.3]{GrivauxMatheronMenet22}). 
%Using Theorem \ref{thWOTdenseposisometriesbijgralew}, and using the fact that the set of positive isometries of $L^p[0,1]$ is \sot-closed in $\Cp$, we immediately obtain the following result.
We first prove the following lemma.

\begin{lemma} \label{lemmeadjointposetptscont}
    Let $1 < p < \infty$ and let $p'$ be the conjugate exponent of $p$. For every positive contraction $T \in \Cp$, the adjoint $T^*$ of $T$ on $L^{p'}[0,1]$ is a positive contraction. Moreover, $T \in \Contp(\emph{\wot}, \emph{\sote})$ if and only if $T \in \Contp(\emph{\wot}, \emph{\sot})$ and $T \in \textrm{Cont}_{p}(\emph{\wot}, \emph{\sott})$, where $\emph{\sott}$ is the $\emph{\sot}$ topology on the adjoints.  
\end{lemma}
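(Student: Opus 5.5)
The first claim is pure duality. For $T\in\Cp$ the adjoint $T^*$ acts on $L^{p'}[0,1]=(L^p[0,1])^*$ via $\int (Tf)\,g\,dm=\int f\,(T^*g)\,dm$, and $\|T^*\|=\|T\|\le 1$. To get positivity, take $g\ge 0$ in $L^{p'}$. For every $f\ge 0$ in $L^p$ we have $\int f\,(T^*g)\,dm=\int (Tf)\,g\,dm\ge 0$. Testing with $f=\indic_A$ for all Borel $A$ shows that the real and imaginary parts of $T^*g$ integrate to something real and non-negative over every set. Hence $\operatorname{Im}T^*g=0$ and $T^*g\ge 0$ a.e., so $T^*$ is a positive contraction of $L^{p'}[0,1]$.

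For the equivalence I will use that $\sote$ on $\Cp$ is the supremum of $\sot$ and $\sott$. A net $T_\alpha\to T$ in $\sote$ means exactly $T_\alpha x\to Tx$ and $T_\alpha^*y\to T^*y$ in norm for all $x\in L^p$ and $y\in L^{p'}$.

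For ``$\Rightarrow$'': the identity $(\Cp,\sote)\to(\Cp,\sot)$ and the identity $(\Cp,\sote)\to(\Cp,\sott)$ are both continuous. Composing them with the identity $(\Cp,\wot)\to(\Cp,\sote)$, which is continuous at $T$ by assumption, shows that $T$ is a point of continuity for both $(\wot,\sot)$ and $(\wot,\sott)$.

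For ``$\Leftarrow$'': let $\mathcal{V}$ be a basic $\sote$-neighbourhood of $T$, say
$$\mathcal{V}=\{S:\|(S-T)x_i\|<\veps,\ \|(S-T)^*y_i\|<\veps,\ i\le r\}.$$
This is the intersection of an $\sot$-neighbourhood $\mathcal{V}_1$ and an $\sott$-neighbourhood $\mathcal{V}_2$ of $T$. By the two continuity assumptions there are $\wot$-neighbourhoods $\mathcal{W}_1,\mathcal{W}_2$ of $T$ with $\mathcal{W}_j\cap\Cp\subset\mathcal{V}_j$. Then $\mathcal{W}_1\cap\mathcal{W}_2$ is a $\wot$-neighbourhood of $T$ contained in $\mathcal{V}$. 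Since $\Cp$ is $\wot$-metrizable (Polish), sequences would suffice, but the neighbourhood argument avoids even that.

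The only point needing care is making sure the topologies live on the same set $\Cp$. $\sott$ is the topology on $\Cp$ pulled back from $\sot$ on $L^{p'}$ via $T\mapsto T^*$. The first part of the lemma guarantees this map lands in positive contractions of $L^{p'}$, so all three identity maps are well defined. There is no real obstacle: the content is the positivity of the adjoint together with the observation that $\sote=\sot\vee\sott$.
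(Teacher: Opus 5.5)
Your proposal is correct and follows the paper's proof. Positivity of $T^*$ comes from the duality identity $\int_{[0,1]} Tf\cdot g\,dm=\int_{[0,1]} f\cdot T^*g\,dm$ tested against non-negative $f$. The equivalence, which the paper simply calls clear, is exactly the fact that \sote{} on $\Cp$ is the supremum of \sot{} and \sott{}, and your intersection-of-neighbourhoods argument makes it rigorous. One harmless inaccuracy: the first part of the lemma is not needed for the identity maps to be well defined, since \sott{} is pulled back via $T\mapsto T^*$ from \sot{} on all bounded operators of $L^{p'}[0,1]$.
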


\begin{proof}
The first part of the proof follows from the fact that $T^*$ is characterized by the following duality equation 
 $$
 \int_{[0,1]} Tf \cdot g \, dm = \int_{[0,1]} f \cdot T^*g \, dm 
 $$
for every $f \in L^p[0,1]$ and $g \in L^{p'}[0,1]$, together with the fact that a function $h \in L^{p'}[0,1]$ is non-negative if and only if $\int_{[0,1]} f h \, dm \geq 0$ for every non-negative function $f \in L^p[0,1]$. The second part of the proof is clear.
\end{proof}

It is not hard to see that every positive isometry of $L^p[0,1]$ with $1<p<\infty$ belongs to $\Contp(\wot,\sot)$. Indeed, this can be deduced from \cite[Prop. 3.32]{Brezis-book}, which states that in a uniformly convex Banach space $E$, every sequence $(x_n)_{n \geq 1}$ in $E$ converging weakly to a point $x \in E$ such that $\displaystyle \limsup_{n \to \infty} \lVert x_n \rVert \leq \lVert x \rVert$ converges strongly. Since $L^p[0,1]$ is uniformly convex for every $1 < p < \infty$, we obtain that $\I_p \subset \Contp(\wot,\sot)$ for every $1 < p < \infty$. This fact was already observed in a more general setting in \cite[Subsection 2.5]{GrivauxMatheronMenet22}. Moreover, for every $1 < p < \infty$, the topologies $\wot$ and $\sot$ coincide on $\I_p$ and the topologies $\wot, \sot$ and $\sote$ coincide on $\U_p$.

\begin{theorem} \label{thmtypicalWOTbijisometries}
    For every $1 < p < \infty$, we have $\Contp(\emph{\wot}, \emph{\sot}) = \I_p$ and $\Contp(\emph{\wot}, \emph{\sote}) = \U_p$. In particular, for every $1 < p < \infty$, the sets $\I_p$ and $\U_p$ are dense $G_\delta$ in $(\Cp, \emph{\wot})$. 
\end{theorem}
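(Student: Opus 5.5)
We prove the two equalities by showing the inclusions in both directions. Then the dense $G_\delta$ assertion follows from the facts recalled above: $\Contp(\wot,\sot)$ and $\Contp(\wot,\sote)$ are \wot-dense $G_\delta$ in $\Cp$ by \cite[Cor.~2.10]{GrivauxMatheronMenet22}.

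\emph{Inclusions $\Contp(\wot,\sot)\subset\I_p$ and $\Contp(\wot,\sote)\subset\U_p$.} We apply \cite[Lemma 2.3]{GrivauxMatheronMenet22}: if $D\subset\Cp$ is $\tau_1$-dense and $\tau_2$-closed, then $\Contp(\tau_1,\tau_2)\subset D$.

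For $D=\I_p$ with $(\tau_1,\tau_2)=(\wot,\sot)$, density holds by Theorem \ref{thWOTdenseposisometriesbijgralew}, since $\U_p\subset\I_p$ is already \wot-dense. Closedness holds because an \sot-limit of isometries is an isometry, and positivity is \sot-closed.

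For $D=\U_p$ with $(\tau_1,\tau_2)=(\wot,\sote)$, density is again Theorem \ref{thWOTdenseposisometriesbijgralew}. For \sote-closedness, take $T_\alpha\in\U_p$ with $T_\alpha\to T$ and $T_\alpha^*\to T^*$ strongly. Then $T$ is a positive isometry. For $T^*$, note that for $T_\alpha=T_{\tau}^{(p)}$ the adjoint on $L^{p'}$ is the inverse-type operator $f\mapsto \omega_{\tau^{-1}}^{1/p'}(f\circ\tau^{-1})$, which is an isometry of $L^{p'}$. This can be checked directly: $T_\alpha^*$ is the Banach adjoint of a surjective isometry, hence itself a surjective isometry. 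Consequently $T^*$ is an isometry of $L^{p'}$ as a strong limit of isometries. An isometry $T$ whose adjoint is injective has dense range; since its range is also closed, $T$ is surjective. So $T\in\U_p$.

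\emph{Inclusions $\I_p\subset\Contp(\wot,\sot)$ and $\U_p\subset\Contp(\wot,\sote)$.} The first was observed above via uniform convexity (\cite[Prop.~3.32]{Brezis-book}). For the second, by Lemma \ref{lemmeadjointposetptscont} it suffices to show $\U_p\subset\Contp(\wot,\sott)$. Let $T\in\U_p$ and let $T_\alpha\to T$ in \wot\ with $T_\alpha\in\Cp$; we may work with sequences, since \wot\ is metrizable on $\Cp$. Then $T_\alpha^*\to T^*$ in \wot\ on $L^{p'}$, and the $T_\alpha^*$ are positive contractions of $L^{p'}$ by Lemma \ref{lemmeadjointposetptscont}. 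Moreover $T^*$ is a positive surjective isometry of $L^{p'}$, as in the previous step. Applying the same uniform convexity argument in $L^{p'}$, with $1<p'<\infty$, gives $T_\alpha^*g\to T^*g$ in norm for every $g$, since $\limsup\|T_\alpha^*g\|\le\|g\|=\|T^*g\|$.

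The main point needing care is identifying $T^*$ for $T\in\U_p$ as an isometry of $L^{p'}$, together with the surjectivity argument in the closedness step. Both reduce to the standard fact that the adjoint of a surjective isometry between reflexive spaces is a surjective isometry. Everything else is a direct application of the cited lemmas.
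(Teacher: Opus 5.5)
Your proof is correct and follows the paper's own argument. Theorem \ref{thWOTdenseposisometriesbijgralew} gives density, and combined with $\sot$/$\sote$-closedness and \cite[Lemma~2.3]{GrivauxMatheronMenet22} it yields $\Contp(\wot,\sot)\subset\I_p$ and $\Contp(\wot,\sote)\subset\U_p$; uniform convexity of $L^p$ and $L^{p'}$, together with Lemma \ref{lemmeadjointposetptscont}, gives the reverse inclusions. Your justification of the $\sote$-closedness of $\U_p$ is slightly more explicit than the paper's one-line remark: the limit's adjoint is an isometry, hence injective, so $T$ has dense closed range and is therefore surjective.
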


\begin{proof}
    By Theorem \ref{thWOTdenseposisometriesbijgralew}, the sets $\I_p$ and $\U_p$ are dense in $(\Cp, \wot)$. Moreover, the sets $\I_p$ and $\U_p$ are respectively closed in $(\Cp, \sot)$ and $(\Cp,\sote)$. Indeed, this is clear for $\I_p$, and for $\U_p$ this follows from the fact that if $T$ is a positive invertible isometry of $L^p[0,1]$ then $T^*$ is a positive invertible isometry of $L^{p'}[0,1]$. This implies in particular that $\Contp(\wot, \sot) \subset \I_p$ and $\Contp(\wot, \sote) \subset \U_p$. Now by the above remark, the set $\I_p$ is contained in $\Contp(\wot, \sot)$ for every $1 < p < \infty$, implying that $\I_p = \Contp(\wot, \sot)$. Finally, if $T \in \U_p$, then $T^*$ is a positive invertible isometry of $L^{p'}[0,1]$, implying that $T^*$ belongs to $\textrm{Cont}_{p'}(\wot, \sot)$ and thus that $T$ belongs to $\Contp(\wot, \sott)$. It follows that $\U_p = \Contp(\wot, \sote)$ by Lemma \ref{lemmeadjointposetptscont}.
\end{proof}

\begin{remark} \label{remproptypCpUp}
    In particular, a property of elements of $\Cp$ is typical in $(\Cp, \wot)$ if and only if it is typical in $(\U_p, \sot)$, by Theorem \ref{thmtypicalWOTbijisometries}.
\end{remark}

We also have an analogue of Theorem \ref{thmtypicalWOTbijisometries} in the class of positive isometries. 

\begin{theorem}
    For every $1 < p < \infty$, the set of points of continuity of the identity map from $(\I_p, \emph{\sot})$ onto $(\I_p, \emph{\sote})$ is equal to $\U_p$. In particular, for every $1 < p < \infty$, the set $\U_p$ is dense $G_\delta$ in $(\I_p, \emph{\sot})$.
\end{theorem}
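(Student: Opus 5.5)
The plan mirrors the proof of Theorem \ref{thmtypicalWOTbijisometries}, now working inside $(\I_p,\sot)$ instead of $(\Cp,\wot)$. We first show that $\U_p$ coincides with the set of continuity points of $\id:(\I_p,\sot)\to(\I_p,\sote)$. Then we deduce the dense $G_\delta$ property from Baire's theorem on Baire class $1$ maps, as in the second proof of Theorem \ref{thm:unitary-G-delta-in-I-C}.

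For the inclusion of $\U_p$ into the continuity points, fix $T\in\U_p$ and a sequence $(T_n)\subset\I_p$ with $T_n\to T$ strongly. We need $T_n^*\to T^*$ strongly on $L^{p'}[0,1]$. First, $T_n^*\to T^*$ weakly, since $\int T_nf\cdot g\,dm\to\int Tf\cdot g\,dm$. Next, $T^*$ is a positive invertible isometry of $L^{p'}[0,1]$, so $\|T^*g\|_{p'}=\|g\|_{p'}\geq\limsup_n\|T_n^*g\|_{p'}$ because the $T_n^*$ are contractions. Uniform convexity of $L^{p'}[0,1]$, via \cite[Prop.~3.32]{Brezis-book} exactly as in the remark before Theorem \ref{thmtypicalWOTbijisometries}, then gives $T_n^*g\to T^*g$ in norm. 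Hence $T$ is a continuity point. Since the topologies are metrizable, sequences suffice.

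For the converse inclusion, we use \cite[Lemma 2.3]{GrivauxMatheronMenet22}. The set $\U_p$ is closed in $(\I_p,\sote)$: if $T_n\in\U_p$, $T_n\to T$ and $T_n^*\to T^*$ strongly, then $T^*$ is a strong limit of isometries of $L^{p'}[0,1]$ and is therefore an isometry. So both $T$ and $T^*$ are isometries, and $T$ is invertible because an isometry whose adjoint is injective has dense closed range. We also need $\U_p$ to be dense in $(\I_p,\sot)$; this is the step I expect to be the main obstacle. I would try to reduce it to Theorem \ref{thmtypicalWOTbijisometries}. Given $T\in\I_p$, that theorem provides $U_n\in\U_p$ with $U_n\to T$ in $\wot$. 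Since $T$ is a positive isometry, it belongs to $\Contp(\wot,\sot)$, so $U_n\to T$ in $\sot$. This gives the density and avoids any explicit construction. With density and closedness in hand, the lemma yields that every continuity point lies in $\U_p$.

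Finally, $(\I_p,\sot)$ is Polish and $(\I_p,\sote)$ is metrizable and separable, the latter as a subspace of the separable space $(\mathcal{C},\sote)$ of contractions on $L^p\times L^{p'}$, arguing as in \cite{GrivauxMatheronMenet21b}. The identity map is of Baire class $1$ by the argument of \cite[Lemma 2.9]{GrivauxMatheronMenet22}: strong convergence of the adjoints is expressed through $\|T^*g\|=\sup_f|\langle f,T^*g\rangle|$, a supremum of $\sot$-continuous functions on $\I_p$. Baire's theorem \cite[Thm.~24.14]{Kechris-book} then shows that the set of continuity points, namely $\U_p$, is dense $G_\delta$ in $(\I_p,\sot)$. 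Alternatively, given the density shown above, the $G_\delta$ property follows directly from \cite[Cor. 2.10]{GrivauxMatheronMenet22}-type arguments.
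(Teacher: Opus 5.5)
Your proposal is correct and follows essentially the same route as the paper. The inclusion of $\U_p$ in the continuity points is the same uniform-convexity argument applied to the adjoints on $L^{p'}[0,1]$. For the converse, you combine density of $\U_p$ in $(\I_p,\sot)$ (Theorem~\ref{thWOTdenseposisometriesbijgralew} plus the \wot-to-\sot{} upgrade at isometries) with $\sote$-closedness of $\U_p$; this is just the paper's direct sequence argument packaged through \cite[Lemma 2.3]{GrivauxMatheronMenet22}. Your Baire class~$1$ argument supplies the ``in particular'' clause that the paper leaves implicit, although that machinery is not really needed. The continuity points of any map into a metric space always form a $G_\delta$ set, so the density of $\U_p$ you already established suffices.
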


\begin{proof}
    Every positive invertible isometry of $L^p[0,1]$ is a point of $(\sot, \sote)$-continuity by the above remark. For the converse inclusion, let $T \in \I_p$ be a point of $(\sot, \sote)$-continuity in $\I_p$. By Theorem \ref{thWOTdenseposisometriesbijgralew}, there exists a sequence $(T_n)_{n \geq 1}$ in $\U_p$ converging to $T$ for $\wot$. Since $T$ is an isometry the convergence also holds for $\sot$ and thus for $\sote$, since $T$ is a point of continuity. It implies that $T^*$ belongs to $\mathcal{I}_{p'}$ and thus that $T$ belongs to $\U_p$.
\end{proof}
\enlargethispage{2\baselineskip}

From Theorem \ref{thmtypicalWOTbijisometries}, we can provide a description of the spectrum of a \wot-typical positive contraction of $L^p[0,1]$. It is based on the following result, taken from the book \cite[Paragraph~3.6]{Nadkarni-book} and adapted to $L^p[0,1]$. It makes use of Rokhlin's lemma \cite[Lemma 4]{ChaconFriedman65}, stating that if $\tau \in \GG$ is an aperiodic non-singular transformation, then given $\varepsilon>0$ and an integer $n \geq 1$, there exists a measurable set $A$ such that $A, \tau A, \dotsc, \tau^{n-1} A $ are pairwise disjoint and $m([0,1] \setminus \displaystyle \bigcup_{i=0}^{n-1} \tau^i A) < \varepsilon$. 

\begin{lemma} \label{lemmespectreisometriesposbijaperiodic}
    Let $\tau \in \GG$ be aperiodic. Then the approximate spectrum (and hence the spectrum) of the positive invertible isometry $T_\tau^{(p)}$ of $L^p[0,1]$ is the whole unit circle.
\end{lemma}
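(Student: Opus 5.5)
The plan is to show that every $\lambda\in\T$ lies in the approximate point spectrum of $T:=T_\tau^{(p)}$. Since $T$ is an invertible isometry, its spectrum is contained in $\T$. It then follows that $\sigma(T)=\T$, because the approximate spectrum is contained in the spectrum.

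Fix $\lambda\in\T$, $\varepsilon>0$ and $n\geq 2$. Rokhlin's lemma \cite[Lemma 4]{ChaconFriedman65} gives a measurable $A$ with $A,\tau A,\dots,\tau^{n-1}A$ pairwise disjoint. Rather than a plain Rokhlin tower, I would use $\tau^{-1}$ so that the direction matches the action of $T$; the version of Rokhlin's lemma stated applies equally to $\tau^{-1}$, which is also aperiodic. The key observation is that $T$ maps functions supported on a set $B$ to functions supported on $\tau^{-1}B$. It also preserves $L^p$-norms exactly, since it is an isometry. So I set $g_0:=\indic_A$ and $g_k:=T^k g_0$, where $g_k$ is supported on $\tau^{-k}A$ and $\|g_k\|_p=\|g_0\|_p=m(A)^{1/p}$. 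The tower for $\tau^{-1}$ guarantees that these supports are pairwise disjoint for $0\le k\le n-1$. The test function is the discrete "eigenvector with a tapered window"
\[
f:=\sum_{k=0}^{n-1}\phi(k/n)\,\lambda^{-k} g_k ,
\]
with $\phi(t)=\min(t,1-t)$, or simply $\phi\equiv 1$ if one is willing to accept the boundary cost. Then
\[
Tf-\lambda f=\sum_k\bigl(\phi(k/n)-\phi((k+1)/n)\bigr)\lambda^{-k}g_{k+1}+\text{(boundary terms)} .
\]

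Disjointness of the supports turns norms into $\ell^p$-sums of coefficients. With $\phi\equiv 1$ one gets $\|Tf-\lambda f\|_p^p\le 2\,m(A)$, while $\|f\|_p^p=n\,m(A)$. Hence
\[
\|Tf-\lambda f\|_p/\|f\|_p\le (2/n)^{1/p}\to 0 .
\]
Only $m(A)>0$ is needed here, and the tower condition gives this because $n\,m(A)\ge 1-\varepsilon$ when $\tau$ is measure-preserving. For the non-singular case, I only need $A\neq\emptyset$ up to null sets, which follows from $m\bigl(\bigcup_i\tau^iA\bigr)>1-\varepsilon$ together with non-singularity. Letting $n\to\infty$ yields approximate eigenvectors for every $\lambda\in\T$.

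The main obstacle is bookkeeping rather than mathematics. It has two parts: checking the support-shift direction of $T_\tau^{(p)}f=\omega_\tau^{1/p}\,(f\circ\tau)$, which maps functions supported on $B$ to functions supported on $\tau^{-1}B$, and ensuring $\|g_k\|_p$ is independent of $k$. The isometry property settles the second point without needing the Radon--Nikodym weights to stay bounded along the tower. This is why I define $g_k$ by iterating $T$ instead of using indicator functions of the tower levels.
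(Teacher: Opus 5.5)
Your argument is correct and is essentially the paper's: a Rokhlin tower carrying a quasi-eigenfunction whose values are transported from level to level by the operator itself. The paper writes the Radon--Nikodym weights explicitly and puts $f=1$ on the remainder $C$, so it needs $m(C)<\delta$; your $f=\sum_{k}\lambda^{-k}T^k\indic_A$ needs only $m(A)>0$ and lets the isometry property handle the weights. One harmless slip: $Tf-\lambda f=\lambda^{1-n}g_n-\lambda g_0$ and $\tau^{-n}A$ may intersect $A$, so you only get $\|Tf-\lambda f\|_p\le 2\,m(A)^{1/p}$, i.e.\ a ratio of $2n^{-1/p}$ instead of $(2/n)^{1/p}$, which still tends to $0$.
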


\begin{proof}
    Let $\lambda \in \T$ and let $\varepsilon>0$. We will construct a function $f$ in $L^p[0,1]$ with $\lVert f \rVert_p = 1$ such that $\lVert T_\tau^{(p)} f - \lambda f \rVert_p < \varepsilon$. 

    Let $\delta < (\varepsilon/4)^{p}$ and let $n \geq 1$ such that $1/n < (\varepsilon/4)^{p}$. By Rokhlin's lemma, there exists a Borel subset $A$ such that $A, \tau A, \dotsc, \tau^{n-1} A$ are pairwise disjoint and $m([0,1] \setminus \displaystyle \bigcup_{k=0}^{n-1} \tau^k A) < \delta$. Let $C:= [0,1] \setminus \displaystyle \bigcup_{k=0}^{n-1} \tau^k A $.
    We set $f = a $ on $A$, with $a$ to be chosen later, and define $f$ inductively by setting
    $$
    f(x) = \lambda \left( \frac{d m\circ \tau^{-1}}{dm}(x) \right)^{1/p} f(\tau^{-1} x) \quad \textrm{for every $x \in \tau^k A$, $1 \leq k \leq n-1$},
    $$
    and $f = 1$ on $C$. It implies, in particular, that
    $$
    \int_{\tau^k A} \lvert f \rvert^p dm = \int_{\tau^{k-1} A} \lvert f \rvert^p dm, \, \forall 1 \leq k \leq n-1.
    $$
    Thus $\lVert f \rVert_p^p = n a^p m(A) + m(C)$. We choose $a = \left (\frac{1-m(C)}{n m(A)} \right)^{1/p}$ so that $\lVert f \rVert_p = 1$. Let us remark that 
    \begin{align} \label{eq1spectrumposisometries}
        T_\tau^{(p)} f (x) = \lambda f(x) \quad \textrm{for every  $x \in \displaystyle \bigcup_{k=0}^{n-2} \tau^k A$},
    \end{align}
    and that
    \begin{align} \label{eq2spectrumposisometries}
        \int_{\tau^k A} \lvert f \rvert^p dm = a^p m(A) \leq 1/n \quad \textrm{for every $ 0 \leq k \leq n-1 $}.
    \end{align}
    Thus
    \begin{align*}
        \lVert T_\tau^{(p)} f - \lambda f \rVert_p^p &= \int_{C \cup \tau^{n-1}(A)} \lvert T_\tau^{(p)}f - \lambda f \rvert^p dm \\
        &\leq \left( \left(\int_{C \cup \tau^{n-1} A} \lvert T_\tau^{(p)}f \rvert^p dm \right)^{1/p} +  \left(\int_{C \cup \tau^{n-1} A} \lvert f \rvert^p dm \right)^{1/p}   \right)^p \\
        &\leq \left( (m(C) + 1/n)^{1/p} + \left(\int_{C \cup \tau^{n-1} A} \lvert T_\tau^{(p)} f \rvert^p dm \right)^{1/p}  \right)^p.
    \end{align*}
    Moreover, $C \cup \tau^{n-1}A = [0,1] \setminus \displaystyle \bigcup_{k=0}^{n-2} \tau^k A$, so that
    \begin{align*}
        \int_{C \cup \tau^{n-1} A} \lvert T_\tau^{(p)} f\rvert^p dm &= \int_{[0,1]} \lvert T_\tau^{(p)} f\rvert^p dm - \displaystyle \sum_{j=0}^{n-2} \int_{\tau^j A} \frac{d m\circ \tau}{dm}(x) \lvert f( \tau x) \rvert^p dm(x) \\
        &= 1 - \displaystyle \sum_{j=0}^{n-2} \int_{\tau^{j+1} A} \lvert f \rvert^p dm \\
        &= m(C) + a^p m(A) \\
        &\leq m(C) + 1/n,
    \end{align*}
    therefore implying that
    $\lVert T_\tau^{(p)} f - \lambda f \rVert_p \leq 2 (\delta + 1/n)^{1/p} < \varepsilon$.
\end{proof}

We can deduce the following result.

\begin{proposition} \label{propspectrumtypicalposcontracWOT}
Let $1 < p < \infty$. The set of positive contractions $T \in \mathcal{C}_p$ such that $\sigma_{ap}(T) = \sigma(T) = \T$ is dense $G_\delta$ in $(\I_p, \emph{\sot})$ and $(\U_p, \emph{\sot})$, and comeager in $(\mathcal{C}_p, \emph{\wot})$.
\end{proposition}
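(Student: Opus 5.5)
The plan is to establish the property in $(\U_p,\sot)$ first, and then transfer it. By Theorem \ref{thmtypicalWOTbijisometries}, $\U_p$ is dense $G_\delta$ in $(\Cp,\wot)$ and the topologies $\wot$ and $\sot$ coincide on $\U_p$. Likewise, $\U_p$ is dense $G_\delta$ in $(\I_p,\sot)$. Hence a dense $G_\delta$ subset of $(\U_p,\sot)$ is again dense $G_\delta$ in $(\I_p,\sot)$. It is also comeager in $(\Cp,\wot)$, by the same argument as in Remark \ref{remproptypCpUp}: a dense $G_\delta$ of a dense $G_\delta$ subspace is comeager. So it suffices to treat $\U_p$.

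For density in $\U_p$, identify $\U_p$ with $\GG$ through $\xi:\tau\mapsto T^{(p)}_\tau$. By definition of the $p$-coarse topology, $\xi$ is a homeomorphism onto $(\U_p,\sot)$. Fix any aperiodic $\tau\in\GG$, for example an irrational rotation. By Proposition \ref{lemma-conjugacynonsingular}, its conjugacy class is dense in $\GG$. Each conjugate $\sigma^{-1}\tau\sigma$ is again aperiodic, so by Lemma \ref{lemmespectreisometriesposbijaperiodic} its operator has $\sigma_{ap}=\sigma=\T$. (Alternatively, conjugation by the invertible isometry $T^{(p)}_\sigma$ preserves both spectra.) This gives a dense set with the desired property.

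The $G_\delta$ part is the main obstacle. On $\U_p$ every operator is an invertible isometry, so $\sigma(T)\subseteq\T$ automatically. I will show that the set of $T\in\U_p$ with $\T\subseteq\sigma_{ap}(T)$ is $G_\delta$; this forces $\sigma_{ap}=\sigma=\T$. Fix a countable dense set $\{\lambda_j\}\subset\T$ and a countable dense set $\{f_l\}$ in the unit sphere of $L^p[0,1]$. Consider
\[
\bigcap_{j,k\geq 1}\ \bigcup_{l\geq 1}\Bigl\{T\in\U_p:\ \|Tf_l-\lambda_j f_l\|_p<\tfrac1k\Bigr\}.
\]
Each inner set is $\sot$-open. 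An operator lies in this intersection if and only if every $\lambda_j$ is an approximate eigenvalue: approximate eigenvectors can be replaced by nearby $f_l$, since $\|T\|\le 1$. The approximate point spectrum is closed, so containing all $\lambda_j$ gives $\T\subseteq\sigma_{ap}(T)$.

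Combining the density from the conjugacy argument with this $G_\delta$ description yields a dense $G_\delta$ set in $(\U_p,\sot)$. The transfer described in the first paragraph then gives the dense $G_\delta$ statement in $(\I_p,\sot)$ and the comeager statement in $(\Cp,\wot)$.
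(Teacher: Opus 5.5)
Your proof is correct and follows essentially the paper's route. Both use the same countable-intersection $G_\delta$ description via approximate eigenvectors, the density of aperiodic elements of $\U_p$ combined with Lemma \ref{lemmespectreisometriesposbijaperiodic}, and the transfer to $(\I_p,\sot)$ and $(\Cp,\wot)$ through Theorem \ref{thmtypicalWOTbijisometries} and Remark \ref{remproptypCpUp}; the only difference is that you get density from the conjugacy lemma (Proposition \ref{lemma-conjugacynonsingular}) rather than from Chacon--Friedman's density result, and writing the $G_\delta$ set inside $\U_p$, where $\sigma(T)\subseteq\T$ is automatic, is in fact slightly more careful than the paper's formula on all of $\Cp$, since $\T\subseteq\sigma_{ap}(T)$ alone does not give $\sigma(T)=\T$ for a non-surjective isometry (whose spectrum is $\overline{\D}$, which is also why your set coincides with the target set inside $\I_p$).
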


\begin{proof}
The set of positive contractions of $L^p[0,1]$ such that $\sigma_{ap}(T) = \sigma(T) = \T$ can be written as
\begin{align*}
    \displaystyle \bigcap_{\lambda \in \Lambda} \bigcap_{K \geq 1} \{ T \in \Cp : \; \exists f \in L^p[0,1], \;\lVert Tf-\lambda f  \rVert_p < \frac{1}{K} \lVert f \rVert_p \},
\end{align*}
where $\Lambda$ is countable and dense in $\T$. In particular, this set is $G_\delta$ in $(\I_p, \sot)$ and in $(\U_p, \sot)$. Moreover, since the set of positive aperiodic invertible isometries of $L^p[0,1]$ is dense in $\U_p$ by \cite[Lemma 10]{ChaconFriedman65} \footnote{recall that the $p$-coarse topologies coincide on $\GG$} and since $\U_p$ is comeager in $(\mathcal{C}_p,\wot)$ by Theorem \ref{thmtypicalWOTbijisometries}, the conclusion of Proposition \ref{propspectrumtypicalposcontracWOT} follows by Remark \ref{remproptypCpUp}.
\end{proof}

The set of periodic non-singular transformations is also dense in the $p$-coarse topologies (\cite[Prop. 3]{Ionescu65} and \cite[Page 295]{ChaconFriedman65}). Thanks to this result, we obtain that a typical positive contraction of $L^p[0,1]$ is rigid.

\begin{proposition} \label{proptypposcontractionwotrigid}
Let $1 < p < \infty$.
The set of positive rigid contractions of $L^p[0,1]$ is dense $G_\delta$ in $(\I_p, \emph{\sot})$ and $(\U_p, \emph{\sot})$, and comeager in $(\mathcal{C}_p, \emph{\wot})$.
\end{proposition}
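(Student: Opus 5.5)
The plan follows the pattern of Theorem \ref{thm:D-rigid-residual} and Proposition \ref{propspectrumtypicalposcontracWOT}: first show the set is $G_\delta$ in $(\I_p,\sot)$ and $(\U_p,\sot)$, then show it is dense in $\U_p$ using periodic non-singular transformations, and finally transfer to $(\Cp,\wot)$ via Remark \ref{remproptypCpUp}.

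For the $G_\delta$ property, let $d$ be a metric inducing $\wot$ on the contractions of $L^p[0,1]$; it exists since $L^p[0,1]$ is reflexive and separable. The set of rigid positive operators in $\I_p$ (resp.\ $\U_p$) can then be written as
$$
\bigcap_{N\in\N}\bigcup_{n\geq N}\left\{T:\ d(T^n,I)<\frac1N\right\}.
$$
The map $T\mapsto T^n$ is $\sot$-continuous on contractions, and $\sot$ is finer than $\wot$, so each inner set is $\sot$-open. Note that rigidity is equivalent to $T^{n_k}\to I$ weakly along some subsequence. Strictly monotone $n_k$ are obtained from the above intersection by choosing $n\geq N$ at stage $N$; if $T$ is periodic this is automatic anyway.

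For density in $\U_p$, I use the density of periodic non-singular transformations in $\GG$ for the $p$-coarse topology (\cite[Prop.~3]{Ionescu65}, \cite{ChaconFriedman65}). Since $\xi:\tau\mapsto T^{(p)}_\tau$ is, by definition of the $p$-coarse topology, a homeomorphism from $\GG$ onto $(\U_p,\sot)$, periodic positive invertible isometries are $\sot$-dense in $\U_p$. If $\tau^m=\id$, then $(T^{(p)}_\tau)^m=T^{(p)}_{\tau^m}$, which follows by checking the composition formula of Lamperti maps. This gives $(T^{(p)}_\tau)^m=I$, so $(T^{(p)}_\tau)^{km}=I$ for all $k$ and $T^{(p)}_\tau$ is rigid. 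Hence rigid operators form a dense $G_\delta$ set in $(\U_p,\sot)$.

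For $\I_p$, I use that $\U_p$ is dense (indeed dense $G_\delta$) in $(\I_p,\sot)$ by the theorem preceding Lemma \ref{lemmespectreisometriesposbijaperiodic}. A dense subset of $\U_p$ is then dense in $\I_p$, and together with the $G_\delta$ representation in $\I_p$ this gives the dense $G_\delta$ property there. For $(\Cp,\wot)$, Remark \ref{remproptypCpUp} (based on Theorem \ref{thmtypicalWOTbijisometries}) shows that a property typical in $(\U_p,\sot)$ is typical in $(\Cp,\wot)$, which yields comeagerness. The only real obstacle is the bookkeeping in the periodic case: one must verify that the identification of the $p$-coarse topology with $\sot$ on $\U_p$ carries density of periodic $\tau$ to density of operators with $T^m=I$. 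A subtle point is that "periodic" must mean $\tau^m=\id$ a.e. for a fixed $m$. If the cited density only gives transformations with all orbits finite but of unbounded period, I would instead use that $T^{n!}\to I$ strongly for such $\tau$ (since each point has period dividing $n!$ eventually, dominated convergence applied to $f$ and the cocycle), which still gives rigidity.
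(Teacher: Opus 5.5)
Your proposal is correct and follows the paper's proof: the $G_\delta$ representation as in Lemma~\ref{prop:polyn-Gdelta-op}, density in $\U_p$ (and hence in $\I_p$) from the density of periodic non-singular transformations in $\GG$, and the transfer to $(\Cp,\wot)$ via Remark~\ref{remproptypCpUp}. You also fill in details the paper leaves implicit, namely the identity $(T^{(p)}_\tau)^m=T^{(p)}_{\tau^m}$ and the use of the $\sot$-density of $\U_p$ in $\I_p$ to get density in $\I_p$.
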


\begin{proof}
The set of positive rigid contractions of $\Cp$ is $G_\delta$ in $(\I_p, \sot)$ and $(\U_p, \sot)$ by the same argument as in the proof of Lemma \ref{prop:polyn-Gdelta-op}. This set is also dense in $\I_p$ and $\U_p$ since periodic non-singular transformations are dense in $\GG$. The conclusion of Proposition \ref{proptypposcontractionwotrigid} thus follows from Remark \ref{remproptypCpUp}.
\end{proof}
\enlargethispage{2\baselineskip}

Proposition \ref{proptypposcontractionwotrigid} was already observed in \cite{AgeevSilva01} for the class $\U_p$.
We now focus on $0$-rigidity. Let us recall that a contraction $T$ on a separable reflexive Banach space $X$ is $0$-rigid if and only if $T$ has no eigenvalues on the unit circle (\cite[Thm. II.4.1, p.~61]{E-book}). We thus investigate the point spectrum of a \wot-typical positive contraction of $L^p[0,1]$ and start with the following (well-known) lemma.
\begin{lemma} \label{lemmevaleurspropresnonsingulartransformations}
    Let $1 \leq p < \infty$ and let $\tau \in \GG$ be a non-singular transformation. The positive invertible isometry $T_\tau^{(p)}$ of $L^p[0,1]$ has an eigenvalue if and only if $1$ is an eigenvalue of $T_\tau^{(p)}$. If this is the case, then there exists a $\tau$-invariant probability measure $\nu$ on $[0,1]$ which is absolutely continuous with respect to the Lebesgue measure $m$.
\end{lemma}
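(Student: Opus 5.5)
The plan is to exploit the lattice structure of $L^p[0,1]$ together with the explicit form $T_\tau^{(p)} f = \omega_\tau^{1/p}\,(f\circ\tau)$ from Lamperti's description recalled above.

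\textbf{Step 1: from an eigenvalue to the eigenvalue $1$.} Suppose $T_\tau^{(p)} f = \lambda f$ with $f \neq 0$. Since $T_\tau^{(p)}$ is an isometry, $|\lambda| = 1$. Because $T := T_\tau^{(p)}$ is a lattice homomorphism, we have
$$
T\lvert f\rvert = \omega_\tau^{1/p}\,\lvert f\circ\tau\rvert = \lvert Tf\rvert = \lvert\lambda f\rvert = \lvert f\rvert .
$$
Thus $g := \lvert f\rvert \geq 0$ is a nonzero fixed point, and $1$ is an eigenvalue. The converse implication is trivial.

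\textbf{Step 2: from a fixed point to an invariant measure.} Let $g \geq 0$, $g \neq 0$, with $Tg = g$. Define $d\nu := g^p\, dm / \lVert g\rVert_p^p$, which is a probability measure absolutely continuous with respect to $m$. To prove invariance, take a Borel set $A$. Using $g = \omega_\tau^{1/p}\,(g\circ\tau)$ and the change of variables $\int (h\circ\tau)\,\omega_\tau\, dm = \int h\, dm$ (the definition of $\omega_\tau = \frac{dm\circ\tau}{dm}$), I compute
$$
\int_{\tau^{-1}A} g^p\, dm = \int (\indic_A\circ\tau)\,(g^p\circ\tau)\,\omega_\tau\, dm = \int_A g^p\, dm .
$$
Hence $\nu(\tau^{-1}A) = \nu(A)$, which is the required invariance.

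\textbf{Main obstacle.} I expect the only delicate point to be the sign and direction conventions for $\omega_\tau$ versus $\omega_{\tau^{-1}}$ in the change-of-variables formula. I would fix these by checking against the isometry identity $\int_{\tau^{-1}(A)} h^p\, dm = m(A)$ from Lamperti's theorem, which encodes exactly this substitution rule. Everything else is routine.
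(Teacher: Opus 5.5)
Your proof is correct and follows essentially the same route as the paper. Both use the lattice-homomorphism identity $\lvert T_\tau^{(p)} f\rvert = T_\tau^{(p)}\lvert f\rvert$ to get a nonnegative fixed point, then take the normalized density $\lvert f\rvert^p\,dm$ and verify invariance by the change of variables with $\omega_\tau$; the only cosmetic difference is that you check $\nu(\tau^{-1}A)=\nu(A)$ while the paper checks $\nu(\tau(A))=\nu(A)$, which is equivalent since $\tau$ is invertible.
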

\begin{proof}
The first part of Lemma \ref{lemmevaleurspropresnonsingulartransformations} follows from the fact that $\lvert T_\tau^{(p)} f \rvert = T_\tau^{(p)} \lvert f \rvert $ for every function $f \in L^p[0,1]$. For the second part, let $f \in L^p[0,1]$ satisfy $\lVert f \rVert_p = 1$ and $T_\tau^{(p)} f = f$. Then the function $g := \lvert f \rvert^p$ belongs to $L^1[0,1]$ and $\int_{[0,1]} g \, dm = 1$. Moreover, we have that $g = \omega_\tau \cdot g \circ \tau$. Let $\nu$ be the probability measure defined by $d\nu = g \, dm$. Then $\nu$ is $\tau$-invariant, since for every Borel subset $A$,
    \begin{align*}
        \nu(\tau(A)) &= \int_{\tau(A)} g(x) dm(x) \\
        &= \int_A g(\tau y) \, \omega_\tau(y) dm(y) \\
        &=\int_A g(y) dm(y) \\
        &= \nu(A).
    \end{align*}
    This concludes the proof of Lemma \ref{lemmevaleurspropresnonsingulartransformations}.
\end{proof}

From the work of Ionescu-Tulcea \cite[Thm. 3]{Ionescu65}, and Chacon and Friedman \cite[Thm. 3]{ChaconFriedman65}, it is well known that, in the $p$-coarse topology, a typical non-singular transformation $\tau \in \GG$ does not admit a $\sigma$-finite invariant measure absolutely continuous with respect to $m$. This is linked to Birkhoff's ergodic theorem. Precisely, if $\tau \in \GG$ has a non-trivial $\sigma$-finite invariant measure absolutely continuous with respect to $m$, then $\displaystyle \lim_{n \to \infty} \frac{1}{n} \displaystyle \sum_{k=0}^{n-1} T_{\tau^k}^{(1)} f(x)$ must exist and be finite a.e on a set of positive measure for every function $f \in L^1[0,1]$ (see \cite[Lemma 7]{ChaconFriedman65}). We can thus deduce the following result on the point spectrum of a \wot-typical positive contraction.

\begin{theorem} \label{theoremevaleursproprestypicalposcontrWOT}
Let $1 < p < \infty$. The set of positive contractions of $L^p[0,1]$ with no eigenvalues and hence $0$-rigid is dense $G_\delta$ in $(\I_p, \emph{\sot})$ and $(\U_p, \emph{\sot})$, and comeager in $(\Cp, \emph{\wot})$. 
\end{theorem}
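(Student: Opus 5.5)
The plan follows the scheme of Propositions \ref{propspectrumtypicalposcontracWOT} and \ref{proptypposcontractionwotrigid}. First show that the set $\mathcal{E}$ of positive contractions without eigenvalues (unimodular or not, though the unimodular ones are what matter) is $G_\delta$ in $(\U_p,\sot)$ and in $(\I_p,\sot)$. Then show it is dense in $\U_p$, and transfer the conclusion to $\I_p$ and $\Cp$ using Theorem \ref{thmtypicalWOTbijisometries} and Remark \ref{remproptypCpUp}. The $0$-rigidity is then automatic from the cited characterization \cite[Thm.~II.4.1]{E-book}: a contraction on a separable reflexive space with no unimodular eigenvalues is $0$-rigid.

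For the $G_\delta$ property, I restrict to unimodular eigenvalues, which is all the $0$-rigidity requires. For an isometry $T$, every eigenvalue lies on $\T$, since $\|Tf\|=\|f\|$. I write the set of isometries having an eigenvalue as
$$\bigcup_{K\ge1}\{T:\ \exists \lambda\in\T,\ \exists f,\ \|f\|_p=1,\ \|f\|_{L^1}\ge 1/K,\ Tf=\lambda f\}$$
and show each set in this union is $\sot$-closed. Take $T_n\to T$ and eigenpairs $(\lambda_n,f_n)$ in the $K$-th set. Pass to a subsequence with $\lambda_n\to\lambda$ and $f_n\to f$ weakly in $L^p$ (reflexivity). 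Then $\|f\|_{L^1}$ need not survive weak limits, so I would instead normalize the eigenfunctions using positivity. By Lemma \ref{lemmevaleurspropresnonsingulartransformations} it suffices to consider $\lambda=1$ with $f\ge0$, because $|Tf|=T|f|$ for positive isometries of Lamperti form. The lower bound is replaced by $\int f\,g_j\,dm\ge 1/K$ for some fixed $g_j$ from a countable dense family of nonnegative functions in $L^{p'}$, and this condition is weakly closed. Since $T_nf_n=f_n$, for every $g\in L^{p'}$ we have $\la Tf,g\ra=\lim\la T_nf_n,g\ra$, because $T_n\to T$ strongly gives $T_n^*g\to T^*g$ weakly only. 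This is the delicate point. In $\U_p$, where $\sot=\sote$, it works directly: $\la T_nf_n,g\ra=\la f_n,T_n^*g\ra\to\la f,T^*g\ra$ because $T_n^*g\to T^*g$ in norm. So $Tf=f$ with $f\ge0$ and $f\ne0$, and the complement of $\mathcal{E}$ is $F_\sigma$ in $\U_p$. For $\I_p$, I would use the fact that $\U_p$ is dense $G_\delta$ in $(\I_p,\sot)$, which is enough for comeagerness. If a genuine $G_\delta$ is required, I would run the same argument with eigenfunctions of $T^*$ on $L^{p'}$.

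For density in $\U_p$, I would invoke the result of Ionescu-Tulcea and Chacon--Friedman: a typical, and in particular a dense set of, $\tau\in\GG$ admits no absolutely continuous $\sigma$-finite invariant measure, hence no finite one. By Lemma \ref{lemmevaleurspropresnonsingulartransformations}, the corresponding $T_\tau^{(p)}$ have no eigenvalues. Since the $p$-coarse topologies coincide, this set is dense in $(\U_p,\sot)$.

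The main obstacle is the closedness step: weak limits of normalized eigenfunctions might vanish. The positivity normalization is the remedy I would try first. A cleaner alternative would be to take $f_n\ge0$ with $\|f_n\|_p=1$ and use $\int f_n^p\,dm$-invariance to force mass retention. Failing that, I would fall back to a Baire-category reformulation through the equivalence between no unimodular eigenvalues and $0$-rigidity: $0$-rigidity is visibly $G_\delta$, via the representation $\bigcap_N\bigcup_{n\ge N}\{d(T^n,0)<1/N\}$ used in Theorem \ref{thm:D-rigid-residual}. That reduction avoids eigenfunction compactness entirely, and I would likely adopt it for the $G_\delta$ part.
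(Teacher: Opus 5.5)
Your proposal is correct. Once you settle on the route you say you would adopt, it is the paper's proof: the $G_\delta$ property comes from writing $0$-rigidity as $\bigcap_N\bigcup_{n\ge N}\{d(T^n,0)<1/N\}$ (the paper's appeal to Lemma \ref{prop:polyn-Gdelta-op} with the zero polynomial) together with ``no eigenvalues $\Leftrightarrow$ $0$-rigid'' for isometries; density comes from Ionescu-Tulcea/Chacon--Friedman plus Lemma \ref{lemmevaleurspropresnonsingulartransformations}; and the transfer to $\I_p$ and $\Cp$ uses density of $\U_p$ in $\I_p$ and Remark \ref{remproptypCpUp}. The eigenfunction-compactness detour is unnecessary: it works in $\U_p$, where $\sot=\sote$, but as you note it does not directly give a $G_\delta$ set in $(\I_p,\sot)$.
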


\begin{proof}
By Lemma \ref{prop:polyn-Gdelta-op}, the set of positive isometries of $L^p[0,1]$ without eigenvalues is $G_\delta$ in $(\I_p, \sot)$ and $(\U_p, \sot)$.
It is also dense in $\I_p$ and $\U_p$ by Lemma \ref{lemmevaleurspropresnonsingulartransformations} and by \cite[Thm. 3]{Ionescu65} and \cite[Thm. 3]{ChaconFriedman65}, proving the first part of Theorem \ref{theoremevaleursproprestypicalposcontrWOT}. The second part follows again from Remark \ref{remproptypCpUp}.
\end{proof}

\begin{remark}
    We can give two other proofs of the fact that a typical positive contraction $T \in (\mathcal{C}_p, \wot)$ satisfies $\sigma(T) = \T$, based on Theorem \ref{theoremevaleursproprestypicalposcontrWOT}. The first proof is based on the fact that any positive contraction of $L^p[0,1]$, $1 < p < \infty$, such that $\sigma(T) \cap \T \ne \T$ and $\sigma_p(T^*) \cap \T = \emptyset$ is strongly stable. This follows from the theorem of Arendt–Batty–Lyubich–V\~u (\cite[Thm. 2.18, p.~51]{E-book}), together with the fact that the peripheral spectrum of a nonzero positive operator on $L^p[0,1]$ is cyclic (see \cite[Cor. 2.20, p.~52]{E-book} and \cite[Section V.4]{Schaefer-book} for more details). Since a typical positive contraction $T \in (\mathcal{C}_p,\wot)$ is unitary, it is not strongly stable and thus satisfies $\sigma(T) = \T$, since it has no eigenvalues (recall that the $\wot$ topology is self-adjoint). The second proof of this result is based on H\"olz \cite[Corollary 2.8]{Holz25} and the fact that a typical positive contraction is a positive invertible isometry with no eigenvalues and thus is a non-periodic Banach lattice homomorphism.
\end{remark}

We have seen in Theorem \ref{thm:D-rigid-residual} that $\overline{\D}$-rigidity is typical for a contraction on a separable Hilbert space. Thanks to Proposition \ref{proptypposcontractionwotrigid} and Theorem \ref{theoremevaleursproprestypicalposcontrWOT}, we will be able to deduce that $[0,1]$-rigidity is also typical in $(\mathcal{C}_p, \wot)$.
We first prove the following analog of Theorem \ref{thm:generic-op} for positive contractions, based on Proposition \ref{lemma-conjugacynonsingular}.

\begin{theorem} \label{typweaklimsemigpconvexposcontractionsWOT}
Let $1 < p < \infty$.
    The set of positive contractions $T \in \mathcal{C}_p$ such that $\frac{1}{2}( I + T^k)$ belongs to the weak limit semigroup of $T$ for every $k \geq 0$ is dense $G_\delta$ in $(\I_p, \emph{\sot})$ and $(\U_p, \emph{\sot})$, and comeager in $(\Cp, \emph{\wot})$. In particular, the set of positive contractions $T \in \mathcal{C}_p$ such that the weak limit semigroup $\mathcal{L}_T$ is convex and contains $P(T)$ for all polynomials P with non-negative coefficients having sum less than or equal to $1$ is dense $G_\delta$ in $(\I_p, \emph{\sot})$ and $(\U_p, \emph{\sot})$, and comeager in $(\Cp, \emph{\wot})$. 
\end{theorem}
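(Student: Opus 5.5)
Following the structure of the proofs of Theorem \ref{thm:generic-transf} and Theorem \ref{thm:generic-op}, we first establish the $G_\delta$ property, then density, and finally transfer the result to $\Cp$.

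\textbf{$G_\delta$ property.} On $\I_p$ and $\U_p$ with $\sot$, the set in question is
$$
\bigcap_{k\geq 0}\,\bigcap_{j,N\in\N}\,\bigcup_{n\geq N}\Big\{T:\ d\big(T^n,\tfrac12(I+T^k)\big)<\tfrac1j\Big\},
$$
where $d$ is a metric for $\wot$ on the unit ball of operators on $L^p[0,1]$. This works because $L^p[0,1]$ is reflexive and separable, so $\wot$ is metrizable on bounded sets. The maps $T\mapsto T^n$ and $T\mapsto T^k$ are $\sot$-continuous on contractions, so each inner set is $\sot$-open. This is the argument of Lemma \ref{prop:polyn-Gdelta}.

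\textbf{Density.} By Remark \ref{remproptypCpUp} and the density of $\U_p$ in $\I_p$, it suffices to prove density in $\U_p\cong\GG$ with the coarse topology. Take the Katok--Stepin transformation $\tau$ from Proposition \ref{prop:ex-Katok}. It is weakly mixing, hence ergodic, hence aperiodic. By Proposition \ref{lemma-conjugacynonsingular}, its conjugacy class $\{\sigma^{-1}\tau\sigma:\sigma\in\GG\}$ is dense in $\GG$. Since the map $\xi:\tau\mapsto T^{(p)}_\tau$ is a homeomorphism onto $(\U_p,\sot)$ by definition of the topology and is multiplicative, conjugates in $\GG$ correspond to similarity of operators: $T^{(p)}_{\sigma^{-1}\tau\sigma}=S^{-1}T^{(p)}_\tau S$ with $S$ invertible and positive. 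Similarity by a fixed bounded invertible $S$ is $\wot$-continuous, so the property ``$\tfrac12(I+T^k)\in\Tlim$'' is invariant under it.

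\textbf{Main obstacle.} It therefore remains to check that the Koopman operator $T^{(p)}_\tau$ on $L^p$ (with $\omega_\tau=1$) has $\tfrac12(I+T^k)$ in its $L^p$-weak limit semigroup. The known statement is in $L^2$: $T^{n_j}\to\tfrac12(I+T^k)$ in $\wot$ on $L^2$. To transfer it, test against $f\in L^\infty\subset L^p$ and $g\in L^\infty\subset L^{p'}$. For such $f,g$, the pairings $\int T^{n_j}f\cdot g\,dm$ converge to the desired limit. Since $L^\infty$ is dense in both $L^p$ and $L^{p'}$, and the operators $T^{n_j}$ and $\tfrac12(I+T^k)$ are uniformly bounded by $1$ on $L^p$, an $\varepsilon/3$ argument gives $\wot$ convergence on $L^p$. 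Hence the conjugacy class lies in the set, and density follows.

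\textbf{Transfer to $\Cp$.} Comeagerness in $(\Cp,\wot)$ follows from Remark \ref{remproptypCpUp}, since $\U_p$ is dense $G_\delta$ in $\Cp$ by Theorem \ref{thmtypicalWOTbijisometries} and the topologies agree on $\U_p$.

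\textbf{The ``in particular'' part.} Proposition \ref{rem:prop-of-limits}(d) holds for contractions on Banach spaces with separable dual, which covers $L^p$. It gives convexity of $\Tlim$. Combining convexity with $T^k\cdot\tfrac12(I+T^m)\in\Tlim$ and with $I\in\Tlim$, $T^k\in\Tlim$ (take $m=0$ and use Proposition \ref{rem:prop-of-limits}(a)) yields all $P(T)$ with nonnegative coefficients summing to $1$.

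For coefficients summing to $s<1$, we also need $0\in\Tlim$. This holds typically by Theorem \ref{theoremevaleursproprestypicalposcontrWOT} (0-rigidity), and we intersect with that comeager set. We then write $P(T)=\sum a_nT^n+(1-s)\cdot 0$, which is a convex combination of elements of $\Tlim$. For the dense $G_\delta$ claim in $\I_p$ and $\U_p$, we intersect with that dense $G_\delta$ set of $0$-rigid operators.
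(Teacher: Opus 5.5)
Your proposal is correct and follows essentially the same route as the paper. The $G_\delta$ property comes from the argument of Lemma \ref{prop:polyn-Gdelta-op}, and density from the Katok--Stepin transformation, transferred from $L^2$ to $L^p$ by density of bounded functions and then spread by the non-singular conjugacy lemma. The passage to $(\Cp,\wot)$ is via Remark \ref{remproptypCpUp}, and the ``in particular'' part via Proposition \ref{rem:prop-of-limits}(d) together with $0$-rigidity from Theorem \ref{theoremevaleursproprestypicalposcontrWOT}.

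The only thing you leave implicit is that the second set is itself $G_\delta$, not merely a set containing a dense $G_\delta$ subset. This is immediate, since that set coincides with the intersection of the first set with $\{T:\,0\in\Tlim\}$: both $\frac12(I+T^k)$ and $0$ are admissible polynomials.
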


\begin{proof}
    The set $\mathcal{A}_p$ of positive contractions $T \in \mathcal{C}_p$ such that $\frac{1}{2}( I + T^k)$ belongs to the weak limit semigroup of $T$ for every $k \geq 0$ is $G_\delta$ in $(\I_p, \sot)$ and $(\U_p, \sot)$ by the same argument of the proof of Lemma \ref{prop:polyn-Gdelta-op}. For the density of $\A_p$, let $\tau$ be a weakly mixing measure-preserving transformation on $[0,1]$ whose weak limit semigroup contains all the operators $\frac{1}{2}(I + 
    (T_\tau^{(2)})^{k})$ with $k \geq 0$ (given by Proposition \ref{prop:ex-Katok}). This implies that for every $k \geq 0$, there exist a subsequence $(n_l)_{l \geq 1}$ in $\N$ such that 
    \begin{align} \label{eqweaklimsemigpL^2}
        \displaystyle \int_{[0,1]} f \circ \tau^{n_l} \cdot g \, dm \underset{l \to \infty}{\longrightarrow} \frac{1}{2} \left ( \int_{[0,1]} f \cdot g \, dm + \int_{[0,1]} f \circ \tau^k \cdot g \, dm \right)
    \end{align}
    for every $f, g \in L^2[0,1]$. Let $p'$ be the conjugate exponent of $p$. Since $(\ref{eqweaklimsemigpL^2})$ holds for every simple functions $f, g$ on $[0,1]$, it also holds for every $f \in L^p[0,1]$ and $g \in L^{p'}[0,1]$, since simple functions are dense in $L^p[0,1]$ and $L^{p'}[0,1]$. Hence the weak limit semigroup of $T_\tau^{(p)}$ contains all the operators $\frac{1}{2}(I + 
    (T_\tau^{(p)})^{k})$ with $k \geq 0$. Since the conjugacy class $\{ T_\sigma^{(p)} T_\tau^{(p)} (T_\sigma^{(p)})^{-1} : \sigma \in \GG \}$ is dense in $(\U_p, \sot)$ by Proposition \ref{lemma-conjugacynonsingular} and since this conjugacy class is contained in $\mathcal{A}_p$, we deduce that the set $\A_p$ is dense $G_\delta$ in $(\I_p, \sot)$ and $(\U_p, \sot)$, and comeager in $(\Cp, \wot)$ by Remark \ref{remproptypCpUp}. 

    For the second part of the proof, the set of positive contractions $T \in \Cp$ such that the weak limit semigroup $\mathcal{L}_T$ is convex and contains $P(T)$ for all polynomials $P$ with non-negative coefficients having sum less than or equal to $1$ is $G_\delta$ in $(\I_p, \sot)$ and $(\U_p, \sot)$ by the same proof as in Lemma \ref{prop:polyn-Gdelta-op} and by Proposition \ref{rem:prop-of-limits}(d). Moreover, this set is dense in $(\U_p, \sot)$, since the set of positive $0$-rigid contractions $T \in \Cp$ such that the weak limit semigroup contains all the operators $\frac{1}{2}(I + T^k)$ with $k \geq 0$ is comeager in $(\U_p, \sot)$ by the first part of the proof and by Theorem \ref{theoremevaleursproprestypicalposcontrWOT}. This concludes the proof of Theorem \ref{typweaklimsemigpconvexposcontractionsWOT}.
\end{proof}

\begin{remark}
    In particular, Theorem \ref{typweaklimsemigpconvexposcontractionsWOT} and an analog of Lemma \ref{prop:polyn-Gdelta-op} imply that the set of $[0,1]$-rigid positive contractions in dense $G_\delta$ in $(\Up, \sot)$ and $(\Ip, \sot)$, and comeager in $(\Cp, \wot)$. 
Note that, since there is no positive isometric isomorphism between
$L^2[0,1]$ and $L^2_0[0,1]$, one cannot proceed as in the proof of Theorem~\ref{thm:D-rigid-residual} to prove that $[0,1]$-rigidity is typical in $(\Cp, \wot)$, $(\Ip, \sot)$ and $(\Up, \sot)$.
\end{remark}

As a consequence of Theorem \ref{typweaklimsemigpconvexposcontractionsWOT}, we also obtain the following analogue of Corollary \ref{cortypweaklimsemigroupinfinitesums}.

\begin{cor}
Let $1 < p < \infty$. The set of positive invertible isometries $T$ of $L^p[0,1]$ such that the weak limit semigroup contains $\displaystyle \sum_{n=-\infty}^{\infty} a_n T^n$
with $(a_n)_{n \geq 0} \subseteq [0,1]$ satisfying $\displaystyle \sum_{n=-\infty}^\infty a_n\leq 1$ is dense $G_\delta$ in $(\I_p, \emph{\sot})$ and $(\U_p, \emph{\sot})$, and comeager in $(\Cp, \emph{\wot})$.
\end{cor}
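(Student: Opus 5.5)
We will deduce the corollary from Theorem \ref{typweaklimsemigpconvexposcontractionsWOT}, combined with Proposition \ref{rem:prop-of-limits}(a), by mirroring the proof of Corollary \ref{cortypweaklimsemigroupinfinitesums}. Let $\mathcal{B}_p$ be the set of $T\in\Up$ such that $\Tlim$ contains $P(T)$ for every polynomial $P$ with non-negative coefficients summing to at most $1$. By Theorem \ref{typweaklimsemigpconvexposcontractionsWOT}, $\mathcal{B}_p$ is comeager in $(\Up,\sot)$; a Lemma \ref{prop:polyn-Gdelta-op}-type argument shows that $\mathcal{B}_p$ is $G_\delta$, and Theorem \ref{typweaklimsemigpconvexposcontractionsWOT} gives density, so $\mathcal{B}_p$ is dense $G_\delta$ in $(\Up,\sot)$. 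We will show that every $T\in\mathcal{B}_p$ has the asserted property. We then transfer the result to $(\I_p,\sot)$ and $(\Cp,\wot)$ using that $\Up$ is dense $G_\delta$ in both spaces (Theorem \ref{thmtypicalWOTbijisometries} and the subsequent theorem). For this transfer we need the following standard fact: a dense $G_\delta$ subset of a dense $G_\delta$ subspace is dense $G_\delta$ in the ambient space. Here the topologies agree, since $\wot=\sot$ on $\Up$. This gives dense $G_\delta$ in $\I_p$ and at least comeager in $\Cp$.

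The main step is the passage from polynomials to two-sided series. Fix $T\in\mathcal{B}_p$ and $(a_n)_{n\in\Z}\subseteq[0,1]$ with $\sum a_n\le 1$. For $N\in\N$, put $Q_N(T):=\sum_{n=0}^{2N}a_{n-N}T^n$, which is a polynomial with non-negative coefficients of sum $\le 1$, so $Q_N(T)\in\Tlim$. Since $T$ is invertible, Proposition \ref{rem:prop-of-limits}(a) shows that $\Tlim$ is closed under multiplication by $T^{-1}$. Hence $T^{-N}Q_N(T)=\sum_{|n|\le N}a_nT^n\in\Tlim$. As $T$ is an invertible isometry, $\|T^n\|=1$ for all $n\in\Z$, so the partial sums converge in operator norm to $\sum_{n\in\Z}a_nT^n$. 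Norm convergence implies $\wot$ convergence, and $\Tlim$ is $\wot$-closed by Proposition \ref{rem:prop-of-limits}(a). Therefore the full series belongs to $\Tlim$.

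We expect the main obstacle to be the topological bookkeeping rather than the operator theory. Proposition \ref{rem:prop-of-limits} is stated for Banach spaces with separable dual, and $L^{p'}[0,1]$ is separable for $1<p<\infty$, so it applies. Beyond this, we need to verify two points. First, the $G_\delta$ property must hold for the infinite-sum set itself, and not only for the polynomial set. This is immaterial because the two sets coincide on $\Up$, and the statement restricts to positive invertible isometries. Second, in $\Cp$ and $\I_p$ the set in question is $\mathcal{B}_p$ viewed as a subset of the larger space. There it is a dense $G_\delta$ subset of the dense $G_\delta$ set $\Up$, hence dense $G_\delta$, which in particular yields the comeager claim in $(\Cp,\wot)$.
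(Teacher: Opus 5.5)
Your proposal is correct and follows essentially the paper's route. The paper just says the proof goes exactly as for Corollary~\ref{cortypweaklimsemigroupinfinitesums}: start from Theorem~\ref{typweaklimsemigpconvexposcontractionsWOT}, pass from non-negative polynomials to two-sided series using the $T^{-1}$-invariance and $\wot$-closedness of $\Tlim$ (Proposition~\ref{rem:prop-of-limits}(a)), and transfer via the dense $G_\delta$ property of $\Up$ in $(\I_p,\sot)$ and $(\Cp,\wot)$. Your remark that $\wot=\sot$ on $\Up$ even yields dense $G_\delta$, not merely comeager, in $(\Cp,\wot)$ is also correct and slightly sharper than the stated claim.
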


\begin{proof}
   The proof goes exactly as the proof of Corollary \ref{cortypweaklimsemigroupinfinitesums}.
\end{proof}

We end this subsection by remarking that, contrary to a generic measure-preserving transformation on $[0,1]$ where the weak limit semigroup of the associated Koopman operator contains the orthogonal projection onto the constants, the weak limit semigroup of a typical positive contraction $T \in (\mathcal{C}_p, \wot)$ does not contain any rank-one positive operator.

\begin{proposition} \label{projectionweaklimsemigrouptypicalWOT}
Let $1 < p < \infty$. The weak limit semigroup of a typical positive contraction $T \in (\I_p, \emph{\sot})$, $T \in (\U_p, \emph{\sot})$ and  $T \in (\mathcal{C}_p, \emph{\wot})$ does not contain any rank-one positive operator.
\end{proposition}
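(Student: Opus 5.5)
We will show that the rank-one positive operators in the weak limit semigroup are excluded by the typical properties already established, namely that a typical $T$ is a positive invertible isometry with no eigenvalues. By Remark \ref{remproptypCpUp} and the density of $\U_p$ in $\I_p$ (together with the dense $G_\delta$ property of $\U_p$ in $(\I_p,\sot)$), it suffices to work in $(\U_p,\sot)$. Theorem \ref{theoremevaleursproprestypicalposcontrWOT} then tells us that typically $T=T^{(p)}_\tau$ with $\tau\in\GG$ and $T$ has no eigenvalues. We will prove the following: \emph{if $T\in\U_p$ has no eigenvalue, then $\Tlim$ contains no rank-one positive operator.} This reduces the statement to a deterministic one, and no new Baire argument is needed.

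Suppose $V=e\otimes \varphi$, that is $Vf=\langle f,\varphi\rangle e$, with $e\ge 0$ in $L^p$ and $\varphi\ge0$ in $L^{p'}$, both nonzero, and that $T^{n_k}\to V$ weakly. By Proposition \ref{rem:prop-of-limits}(a), $V$ commutes with $T$. Hence
$$TVf=\langle f,\varphi\rangle Te=VTf=\langle f,T^*\varphi\rangle e$$
for all $f$. Choosing $f$ with $\langle f,\varphi\rangle\neq0$ gives $Te=ce$ for the scalar $c=\langle f,T^*\varphi\rangle/\langle f,\varphi\rangle$. Since $e\neq0$, this makes $c$ an eigenvalue of $T$, which contradicts the assumption that $T$ has no eigenvalues. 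The case $c=0$ is impossible as well, because $T$ is invertible and $e\ne0$.

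The argument therefore reduces to the single fact that a weak limit of powers commutes with $T$. That fact is immediate once we shift the sequence $(n_k)$ by one. Positivity of $e$ and $\varphi$ is not even needed beyond their being nonzero.

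The only delicate point is the transfer of typicality from $\U_p$ to $\I_p$ and to $\Cp$. For $\Cp$ this is Remark \ref{remproptypCpUp}. For $\I_p$, the theorem just proved states that $\U_p$ is dense $G_\delta$ in $(\I_p,\sot)$. Together with the fact that $\sot$ restricted to $\U_p$ is the same topology in both settings, this shows that the comeager set from Theorem \ref{theoremevaleursproprestypicalposcontrWOT} is comeager in $\I_p$. Theorem \ref{theoremevaleursproprestypicalposcontrWOT} in fact already states the result directly for $\I_p$, so the transfer can be bypassed entirely. In every case, the property "no eigenvalues" is typical, and it forces the absence of rank-one operators in $\Tlim$.
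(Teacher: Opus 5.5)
Your proposal is correct and follows the paper's proof. A weak limit $V$ of powers commutes with $T$ (Proposition~\ref{rem:prop-of-limits}(a)), so a rank-one $V=\langle\cdot,\varphi\rangle e$ forces $Te=ce$, giving $T$ an eigenvalue; Theorem~\ref{theoremevaleursproprestypicalposcontrWOT} excludes this for typical $T$ in all three spaces. Your handling of $c=0$ is fine (isometries are injective, and $0$ would itself be an eigenvalue anyway), and, as you note, positivity of $e$ and $\varphi$ plays no role.
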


\begin{proof}
    Suppose that there exists a rank-one positive operator $P$ on $L^p[0,1]$ in the weak limit semigroup of a positive contraction $T \in \mathcal{C}_p$. Let us set $P(f) = \langle \varphi, f \rangle \, h$ for every $f \in L^p[0,1]$, where $\varphi \in L^{p'}[0,1]$ and $h \in L^p[0,1]$ are nonzero, and $\langle \varphi, \psi \rangle := \displaystyle \int_{[0,1]} \varphi \, \psi \, dm $ for $\psi \in L^p[0,1]$. Let $f \in L^p[0,1]$ be such that $\langle \varphi, f \rangle \ne 0$. Then, since $T$ and $P$ must commute, we obtain that $Th = \frac{\langle \varphi, T f \rangle}{\langle \varphi, f \rangle} h$, implying that $T$ has an eigenvalue. The proof of Proposition \ref{projectionweaklimsemigrouptypicalWOT} follows from Theorem \ref{theoremevaleursproprestypicalposcontrWOT}. 
\end{proof}

\subsection{Positive operators on $L^1[0,1]$.}\par\noindent

As mentioned at the end of Subsection \ref{subsectionposisoLp}, an \sot-typical positive contraction of $L^1[0,1]$
is strongly stable, implying that its weak limit semigroup is
essentially trivial.
The aim of this subsection is to study the weak limit semigroup of an
\sot-typical positive isometry of $L^1[0,1]$.
An important approximation theorem for positive isometries of
$L^1[0,1]$ is the following, proved by Iwanik.

\begin{theorem}[{\cite[Thm. 2]{Iwanik80}}] \label{ThiwanikisoposbijL1}
    The set of positive invertible isometries of $L^1[0,1]$ is dense in $(\I_1, \emph{\sot})$.
\end{theorem}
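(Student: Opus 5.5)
The plan is to build the invertible approximant by hand: approximate a given $T\in\I_1$ on a finite partition, then realize the (slightly perturbed) images of the partition cells exactly by a non-singular bijection, using Lamperti's description of $\U_1$ from Subsection \ref{subsectionposisoLp}.

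\textbf{Step 1: reduction to a finite partition.} Fix $T\in\I_1$, $\varepsilon>0$ and $f_1,\dotsc,f_r\in L^1[0,1]$. Choose a finite measurable partition $(A_1,\dotsc,A_d)$ of $[0,1]$ with $m(A_i)>0$, and simple functions $s_j$ measurable with respect to this partition, such that $\|f_j-s_j\|_1<\varepsilon$ for all $j$. For any contraction $S$ we then have
$$\|Sf_j-Tf_j\|_1\le 2\varepsilon+\|Ss_j-Ts_j\|_1 .$$
Hence it suffices to find $S\in\U_1$ with $\|S\indic_{A_i}-T\indic_{A_i}\|_1$ arbitrarily small for every $i$.

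\textbf{Step 2: structure of $T$ on the partition.} Set $g_i:=T\indic_{A_i}\ge0$. Since $T$ is a positive isometry, $\int g_i\,dm=m(A_i)>0$. Moreover, the $g_i$ are disjointly supported. Indeed, for disjoint non-negative $u,v$ we have
$$\|Tu-Tv\|_1=\|u-v\|_1=\|u\|_1+\|v\|_1=\|Tu\|_1+\|Tv\|_1,$$
and since $Tu,Tv\ge0$ this forces $Tu\wedge Tv=0$. (Alternatively, this follows from Lamperti's form $Tf=h\cdot f\circ\tau$.) Let $B_i:=\{g_i>0\}$. These sets are pairwise disjoint and each has positive measure. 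Put $C:=[0,1]\setminus\bigcup_i B_i$.

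\textbf{Step 3: a positive density on a full partition.} If $m(C)>0$, split $C$ into disjoint sets $C_1,\dotsc,C_d$ of positive measure (Lyapunov's theorem). If $m(C)=0$, take $C_i=\emptyset$. For a small $\eta\in(0,1)$, define
$$g_i':=(1-\eta)g_i+\eta\, m(A_i)\frac{\indic_{C_i}}{m(C_i)}$$
(with the second term omitted when $C_i=\emptyset$), and set $D_i:=B_i\cup C_i$. Then the following hold:
\begin{itemize}
\item $(D_i)$ is a partition of $[0,1]$ up to null sets;
\item $g_i'>0$ a.e. on $D_i$;
\item $\int g_i'\,dm=m(A_i)$;
\item $\|g_i'-g_i\|_1\le 2\eta\, m(A_i)$.
\end{itemize}

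\textbf{Step 4: realizing $g_i'$ by an invertible isometry.} On each $i$, consider two measures: $m|_{A_i}$ on $A_i$, and $\nu_i:=g_i'\,dm$ on $D_i$. Both are non-atomic, have the same total mass $m(A_i)$, and live on standard Borel spaces. Moreover, $\nu_i$ is equivalent to $m|_{D_i}$ because $g_i'>0$ a.e. there.

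By the isomorphism theorem for standard non-atomic measure spaces, there is a Borel bijection $\sigma_i:D_i\to A_i$, defined mod null sets, with $(\sigma_i)_*\nu_i=m|_{A_i}$. Since $\nu_i\sim m|_{D_i}$, the map $\sigma_i$ is non-singular for Lebesgue measure. Gluing the $\sigma_i$ gives $\sigma\in\GG$.

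Define $Sf:=\omega\cdot f\circ\sigma$ with $\omega:=\frac{d m\circ\sigma}{dm}$. This is exactly Lamperti's form for elements of $\U_1$, so $S\in\U_1$. On $D_i$ we have $\omega=g_i'$, since $m(\sigma(E))=\nu_i(E)$ for $E\subset D_i$. Therefore $S\indic_{A_i}=\omega\indic_{\sigma^{-1}A_i}=g_i'$. Choosing $\eta$ small and combining with Step 1 gives the required SOT-approximation of $T$.

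\textbf{Main obstacle.} The delicate point is Step 4. One must guarantee that the target density is strictly positive on a set of full measure; this is why $g_i$ is perturbed onto the uncovered set $C$. One must also verify that the measure isomorphism between $(A_i,m)$ and $(D_i,\nu_i)$ is genuinely non-singular in both directions with respect to Lebesgue measure. Finally, one must confirm that the Radon–Nikodym derivative of the glued map is $g_i'$, with the correct direction convention, i.e.\ $\omega_\sigma$ versus $\omega_{\sigma^{-1}}$. The disjointness argument in Step 2 is the other key structural input, and it is routine.
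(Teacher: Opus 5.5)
The paper does not prove this statement: it is quoted from Iwanik \cite[Thm.~2]{Iwanik80} and then used as a black box (e.g.\ in Proposition \ref{proptypicalposisoisbijSOT}). Your argument is a correct, self-contained alternative route. It uses only two inputs: that a positive isometry of $L^1[0,1]$ sends disjoint non-negative functions to disjoint functions (your equality-case argument forcing $\int\min(Tu,Tv)\,dm=0$ is fine), and the isomorphism theorem for non-atomic standard measure spaces. Your bookkeeping ($\omega_\sigma=g_i'$ on $D_i$, hence $T_\sigma^{(1)}\indic_{A_i}=\omega_\sigma\indic_{\sigma^{-1}(A_i)}=g_i'$) agrees with the paper's convention $T_\tau^{(p)}f=\omega_\tau^{1/p}\cdot f\circ\tau$.

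There is one slip. If $m(C)=0$, omitting the second term leaves $\int g_i'\,dm=(1-\eta)m(A_i)$, so the masses in Step 4 no longer match. In that case simply take $g_i'=g_i$ (i.e.\ $\eta=0$), which even yields $S\indic_{A_i}=T\indic_{A_i}$ exactly.

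What your route buys over the citation is transparency and generality. Take $g_i':=\bigl((1-\eta)g_i^p+\eta\,m(A_i)\indic_{C_i}/m(C_i)\bigr)^{1/p}$, $\nu_i:=(g_i')^p\,dm$, and $T_\sigma^{(p)}$ instead of $T_\sigma^{(1)}$. The same construction then shows directly that $\U_p$ is \sot-dense in $\I_p$ for every $1\le p<\infty$. The paper instead derives this for $p>1$ from Grz\k{a}\'slewicz's \wot-density theorem (Theorem \ref{thWOTdenseposisometriesbijgralew}). The citation, for its part, keeps the paper short.
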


As a consequence of Theorem \ref{ThiwanikisoposbijL1}, we obtain that a typical positive isometry of $L^1[0,1]$ is invertible.

\begin{prop} \label{proptypicalposisoisbijSOT}
    The set $\U_1$ is dense $G_\delta$ in $(\I_1, \emph{\sot})$.
\end{prop}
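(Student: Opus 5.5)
The plan is to copy the first proof of Theorem \ref{thm:unitary-G-delta-in-I-C}, with $H$ replaced by $L^1[0,1]$. Density of $\U_1$ in $(\I_1,\sot)$ is exactly Iwanik's Theorem \ref{ThiwanikisoposbijL1}, so only the $G_\delta$ property needs an argument.

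For the $G_\delta$ property, fix a sequence $(x_n)_{n\geq 1}$ that is dense in $L^1[0,1]$. I would prove the identity
$$
\U_1=\bigcap_{j,k\geq 1}\bigcup_{l\geq 1}\left\{T\in\I_1:\ \|x_j-Tx_l\|_1<\frac1k\right\}=:\bigcap_{j,k\geq 1}\bigcup_{l\geq 1}M_{j,k,l}.
$$

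\emph{Inclusion ``$\subset$''.} If $T\in\U_1$, then $T$ is surjective. So $(Tx_l)_{l \geq 1}$ is dense in $T(L^1[0,1])=L^1[0,1]$, and every $x_j$ is approximated by some $Tx_l$.

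\emph{Inclusion ``$\supset$''.} If $T\in\I_1$ lies in the intersection, then $\{Tx_l\}$ is dense in $L^1[0,1]$. The range of an isometry on a Banach space is complete, hence closed. Since $\{Tx_l\}$ is contained in this closed range and is dense, the range is all of $L^1[0,1]$, so $T$ is a surjective positive isometry.

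Here one must check that this matches the definition of $\U_1$ used in the paper, namely positive surjective (equivalently invertible) isometries. By Lamperti's description in Subsection \ref{subsectionposisoLp}, a surjective positive isometry has the form $T^{(1)}_\tau$ with $\tau\in\GG$. Its inverse is $T^{(1)}_{\tau^{-1}}$, which is again positive, so invertibility in the positive sense is automatic.

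\emph{Openness.} Each set $M_{j,k,l}$ is $\sot$-open in $\I_1$, because $T\mapsto \|x_j-Tx_l\|_1$ is $\sot$-continuous. Hence $\U_1$ is a countable intersection of $\sot$-open sets, i.e.\ $G_\delta$.

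Combined with Iwanik's density result, this gives that $\U_1$ is dense $G_\delta$ in $(\I_1,\sot)$.

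I do not expect a real obstacle. The only point needing a remark is the closedness of the range of an isometry and its consequence, surjectivity. If $\U_1$ is instead read as requiring invertibility within the positive operators, the Lamperti representation settles it as noted above.
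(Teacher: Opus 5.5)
Your proposal is correct and is essentially the paper's proof. The paper writes $\mathcal{U}_1=\bigcap_{j,k\geq 1}\{T\in\mathcal{I}_1:\ \exists f\in L^1[0,1],\ \|Tf-f_j\|<1/k\}$ for a dense sequence $(f_j)$, uses that isometries have closed range, and cites Iwanik's theorem for density; your countable union over a dense sequence $(x_l)$ instead of ``$\exists f$'' is an inessential variation (both give open sets), and your remark on positivity of the inverse is correct but not needed.
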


\begin{proof}
    This easily follows from the fact that the set $\U_1$ of positive invertible isometries of $L^1[0,1]$ is \sot-$G_\delta$ given by
    $$
    \displaystyle \bigcap_{j,k \geq 1} \{ T \in \I_1 : \; \exists f \in L^1[0,1], \;\lVert Tf - f_j \rVert < 1/k \}, 
    $$
    where $(f_j)_{j \geq 1}$ is dense in $L^1[0,1]$. (Recall that isometries have closed range.) Moreover, this set is \sot-dense in $\I_1$ by Theorem \ref{ThiwanikisoposbijL1}. This concludes the proof of Proposition \ref{proptypicalposisoisbijSOT}.
\end{proof}

The same proof as Proposition \ref{propspectrumtypicalposcontracWOT} leads to the following result.

\begin{prop} \label{spectreisometrytypiqueposL1}
    The set of positive isometries $T \in \I_1$ satisfying $\sigma_{ap}(T) = \sigma(T) = \T$ is dense $G_\delta$ in $(\I_1, \emph{\sot})$ and $(\U_1, \emph{\sot})$.
\end{prop}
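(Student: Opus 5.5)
The plan mirrors the proof of Proposition \ref{propspectrumtypicalposcontracWOT}, with $p=1$, and splits into a $G_\delta$ part and a density part. Fix a countable dense set $\Lambda\subset\T$. Since every isometry $T$ satisfies $\sigma(T)\subseteq\ol{\D}$ and $\sigma_{ap}(T)\subseteq\sigma(T)$, it suffices to show that $\T\subseteq\sigma_{ap}(T)$. Indeed, if $\T\subseteq\sigma_{ap}(T)$, then $\sigma(T)\supseteq\T$. Moreover, for an isometry $T$ the spectrum is either $\T$ (when $T$ is invertible) or $\ol{\D}$. On the relevant dense set we are in the invertible case, so $\sigma(T)=\T$ there.

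A cleaner route is to show that the set
$$
\mathcal{S}:=\bigcap_{\lambda\in\Lambda}\bigcap_{K\geq1}\Bigl\{T\in\I_1:\ \exists f\in L^1[0,1],\ \lVert Tf-\lambda f\rVert_1<\tfrac1K\lVert f\rVert_1\Bigr\}
$$
is dense $G_\delta$. Then I intersect it with $\U_1$, which is dense $G_\delta$ in $(\I_1,\sot)$ by Proposition \ref{proptypicalposisoisbijSOT}. For every $T$ in this intersection, closedness of the approximate spectrum gives $\T\subseteq\sigma_{ap}(T)$. Invertibility then gives $\sigma(T)\subseteq\T$, so $\sigma_{ap}(T)=\sigma(T)=\T$. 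The same set, viewed inside $\U_1$, settles the statement for $(\U_1,\sot)$.

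For the $G_\delta$ property, each set under the intersection is $\sot$-open in $\I_1$. Fix the witness $f$; the map $T\mapsto\lVert Tf-\lambda f\rVert_1$ is $\sot$-continuous, so the strict inequality defines an open set. A union of open sets over $f$ is open. Intersecting the resulting open sets with $\U_1$ gives relatively open sets in $(\U_1,\sot)$.

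For density, I will use Lemma \ref{lemmespectreisometriesposbijaperiodic} with $p=1$. Its proof works verbatim for $p=1$: it only uses Rokhlin's lemma and the change-of-variables identity. The lemma shows that $T_\tau^{(1)}$ has approximate spectrum $\T$ for every aperiodic $\tau\in\GG$. By \cite[Lemma 10]{ChaconFriedman65}, aperiodic non-singular transformations are dense in $\GG$ for the coarse topology. All $p$-coarse topologies coincide, so the corresponding operators $T_\tau^{(1)}$ are $\sot$-dense in $\U_1$. Combining this with Theorem \ref{ThiwanikisoposbijL1}, they are also $\sot$-dense in $\I_1$. Hence the intersection of $\mathcal{S}$ with $\U_1$ is dense in both spaces. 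Being a countable intersection of open sets, it is dense $G_\delta$ in each.

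The main point to check is the density step. It requires confirming that \cite[Lemma 10]{ChaconFriedman65} yields density in the $1$-coarse topology, which is precisely the $\sot$ topology on $\U_1$. This follows from the coincidence of the $p$-coarse topologies \cite[Thm. 8]{ChoksiKakutani79}. It also requires checking that the estimates in Lemma \ref{lemmespectreisometriesposbijaperiodic} go through for $p=1$, where the exponents $1/p$ become trivial. Once these are confirmed, the rest is formal.
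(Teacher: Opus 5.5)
Your proposal is correct and follows essentially the paper's route. The paper simply reruns the proof of Proposition \ref{propspectrumtypicalposcontracWOT} with $p=1$: the countable intersection over $\lambda\in\Lambda$ and $K$ gives the $G_\delta$ property, and density comes from Lemma \ref{lemmespectreisometriesposbijaperiodic}, density of aperiodic non-singular transformations, and Theorem \ref{ThiwanikisoposbijL1}. Your explicit intersection with $\U_1$ via Proposition \ref{proptypicalposisoisbijSOT} is a worthwhile extra bit of care, because in $\I_1$ the displayed intersection alone also contains non-invertible isometries, whose spectrum is $\ol{\D}$ rather than $\T$.
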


Using Lemma \ref{lemmevaleurspropresnonsingulartransformations} and again the results from Chacon, Friedman and Ionescu-Tulcea, we deduce the point spectrum of a typical positive isometry of $L^1[0,1]$.

\begin{prop} \label{propvaleurspropresisometriestypiquesposL^1}
   The set of positive isometries of $L^1[0,1]$ with no eigenvalues is comeager in $(\I_1, \emph{\sot})$ and $(\U_1, \emph{\sot})$.
\end{prop}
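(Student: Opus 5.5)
The plan is to mirror the proof of Theorem \ref{theoremevaleursproprestypicalposcontrWOT}, working directly in $(\U_1,\sot)$ and transferring to $(\I_1,\sot)$ via Proposition \ref{proptypicalposisoisbijSOT}. Since $\U_1$ is dense $G_\delta$ in $(\I_1,\sot)$, any set comeager in $(\U_1,\sot)$ is also comeager in $(\I_1,\sot)$. So it suffices to show that positive invertible isometries of $L^1[0,1]$ without eigenvalues form a comeager set in $(\U_1,\sot)$.

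First, I reduce the problem to a statement about non-singular transformations. By Lamperti's description, every $T\in\U_1$ is of the form $T_\tau^{(1)}$ with $\tau\in\GG$, and the map $\xi:\tau\mapsto T_\tau^{(1)}$ is a homeomorphism from $(\GG,\text{coarse})$ onto $(\U_1,\sot)$ by definition of the $1$-coarse topology. By Lemma \ref{lemmevaleurspropresnonsingulartransformations} (valid for $p=1$), if $T_\tau^{(1)}$ has an eigenvalue, then $\tau$ admits an absolutely continuous invariant probability measure. Therefore the set of $T\in\U_1$ without eigenvalues contains $\xi(\mathcal{N})$, where $\mathcal{N}$ is the set of $\tau\in\GG$ admitting no $\sigma$-finite invariant measure equivalent to (or absolutely continuous with respect to) $m$.

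Second, I invoke the results of Ionescu-Tulcea \cite[Thm.~3]{Ionescu65} and Chacon--Friedman \cite[Thm.~3]{ChaconFriedman65}: in the coarse topology, $\mathcal{N}$ is comeager in $\GG$. (The cited statements are in terms of the coarse topology; all $p$-coarse topologies coincide by \cite[Thm.~8]{ChoksiKakutani79}.) Pushing forward by the homeomorphism $\xi$ gives comeagerness of the eigenvalue-free operators in $(\U_1,\sot)$.

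The main obstacle is that, unlike the case $1<p<\infty$, I cannot expect to show that ``no eigenvalues'' is a $G_\delta$ condition in $(\I_1,\sot)$. The argument via Lemma \ref{prop:polyn-Gdelta-op}-type representations relied on reflexivity: $0$-rigidity was equivalent to having no unimodular eigenvalues, and weak limits behave well. In $L^1$, this equivalence and the weak-topology metrizability fail. This is why I claim only comeagerness, and why I route everything through the comeager set $\mathcal{N}$ from the non-singular literature rather than through a direct $G_\delta$ description. I also need to check that Lemma \ref{lemmevaleurspropresnonsingulartransformations} covers eigenvalues of all modulus. For an isometry, eigenvalues lie on $\T$. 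Then $|T f|=T|f|$ gives the eigenvalue $1$ with eigenfunction $|f|\in L^1$, hence the invariant density $g=|f|$ directly. This works for $p=1$ just as stated in the lemma.
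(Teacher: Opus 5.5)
Your proposal is correct and follows essentially the paper's route: Lemma~\ref{lemmevaleurspropresnonsingulartransformations} turns an eigenvalue of $T_\tau^{(1)}$ into an absolutely continuous invariant probability for $\tau$, and the Ionescu-Tulcea/Chacon--Friedman category result makes the set of such $\tau$ meager in $(\U_1,\sot)$. Comeagerness of $\U_1$ in $(\I_1,\sot)$ then transfers the conclusion. The only difference is that the paper unpacks Ionescu-Tulcea's proof to exhibit an explicit meager $F_\sigma$ set (defined through $\sup_{n\le j\le m}T^j\indic/j$) containing the operators with eigenvalues, whereas you cite the category statement as a black box.

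One point to tighten: the invariant density $|f|$ need not be positive a.e., so you must use the \emph{absolutely continuous} version of that result (as the paper states it) rather than the ``equivalent to $m$'' version. Alternatively, intersect with the comeager set of ergodic $\tau$; for these, the $\tau$-invariant set $\{|f|>0\}$ must have full measure.
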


\begin{proof}
    By Lemma \ref{lemmevaleurspropresnonsingulartransformations} and \cite[Proof of Thm.~3]{Ionescu65}, the set of positive invertible isometries of $L^1[0,1]$ with a non-empty point spectrum is contained in $$\mathcal{N} := \displaystyle \bigcup_{k \geq 1} \bigcap_{k \leq n \leq m} \{ T \in \U_1 : \; \rho(T^{(n,m)}, 0) \leq 1/5 \},$$
    where $T^{(n,m)} (x) := \displaystyle \sup_{n \leq j \leq m} \frac{T^j \indic(x)}{j}$ and $\rho(f,g) :=\displaystyle \int_{[0,1]} \frac{\lvert f-g \rvert}{1 + \lvert f - g \rvert} dm$. Since, for every $n \leq m$, the map $T \mapsto T^{(n,m)}$ is continuous from $\U_1$ into the space of real measurable maps on $[0,1]$ equipped with the distance $\rho$, the set $\mathcal{N}$ is $F_\sigma$ in $(\U_1, \sot)$. Moreover, this set is also nowhere dense in $\U_1$ by \cite[Thm. 2]{Ionescu65}. Since $\U_1$ is comeager in $\I_1$ by Proposition \ref{proptypicalposisoisbijSOT}, this concludes the proof of Proposition \ref{propvaleurspropresisometriestypiquesposL^1}. 
\end{proof}

Let us remark that if $T$ is a positive isometry of $L^1[0,1]$, then $\int_{[0,1]} Tf dm = \int_{[0,1]} f dm$ for every $f \in L^1[0,1]$. Indeed, this relation holds for every non-negative function $f \in L^1[0,1]$ since $T$ is positive, and it extends to all $f \in L^1[0,1]$. In particular, it implies that $\lambda I$ cannot be a weak limit of $T^{n_k}$ with $(n_k) \subseteq \N$ when $\lambda \in [0,1)$. However, rigidity remains typical in $\I_1$ and $\U_1$ as the following result shows.

\begin{prop} \label{proprigidL^1sot}
    The set of rigid positive isometries of $L^1[0,1]$ is comeager in $(\I_1, \emph{\sot})$ and $(\U_1, \emph{\sot})$.
\end{prop}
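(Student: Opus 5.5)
We will follow the scheme used for Proposition \ref{proptypposcontractionwotrigid}: first show that rigidity is a $G_\delta$ property in $(\I_1,\sot)$, then that it holds on a dense subset of $\U_1$. Finally we transfer the result to $\I_1$ using Proposition \ref{proptypicalposisoisbijSOT}.

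For the $G_\delta$ part, recall that rigidity of an isometry $T$ means $T^{n_k}\to I$ weakly along some subsequence. For positive isometries of $L^1$ we would rather use the strong version, to avoid working with a weak topology on $L^1$ that is not metrizable on bounded sets. So we take $(f_j)_{j\ge1}$ dense in $L^1[0,1]$ and write
\begin{equation*}
\mathcal{R}:=\bigcap_{K,N\ge1}\ \bigcup_{n\ge N}\Bigl\{T\in\I_1:\ \sum_{j\ge1}2^{-j}\min\bigl(1,\lVert T^nf_j-f_j\rVert_1\bigr)<\tfrac1K\Bigr\}.
\end{equation*}
Since $T\mapsto T^n$ is $\sot$-continuous on contractions, each set in the union is $\sot$-open, so $\mathcal{R}$ is $G_\delta$ in $(\I_1,\sot)$ and likewise in $(\U_1,\sot)$. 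A diagonal argument shows that every $T\in\mathcal{R}$ satisfies $T^{n_k}\to I$ strongly for some subsequence $(n_k)$ of $\N$, hence weakly, so $T$ is rigid. (We only need $\mathcal{R}$ to be contained in the rigid operators, since a superset of a comeager set is comeager.)

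For density in $\U_1$, the main input is the density of periodic non-singular transformations in $\GG$ for the coarse topology (\cite[Prop. 3]{Ionescu65}, \cite[Page 295]{ChaconFriedman65}), combined with the fact that all $p$-coarse topologies coincide. If $\tau$ is periodic, say $\tau^q=\mathrm{id}$ a.e., then $T_\tau^{(1)}$ satisfies $(T_\tau^{(1)})^{q}=T^{(1)}_{\tau^q}=I$, using the cocycle identity for $\omega_\tau$. Hence $(T_\tau^{(1)})^{qk}=I$ for all $k$, and $T_\tau^{(1)}\in\mathcal{R}$. Since $\xi:\tau\mapsto T_\tau^{(1)}$ is a homeomorphism from $(\GG,\text{coarse})$ onto $(\U_1,\sot)$ by the definition of the coarse topology, $\mathcal{R}$ is dense in $(\U_1,\sot)$. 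Being $G_\delta$ as well, it is comeager there.

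To pass to $\I_1$, we use that $\U_1$ is dense $G_\delta$ in $(\I_1,\sot)$ by Proposition \ref{proptypicalposisoisbijSOT}, and that $(\I_1,\sot)$ is Polish. A dense $G_\delta$ subset of $\U_1$ is then a $G_\delta$ subset of $\I_1$, and it is dense because $\U_1$ is dense in $\I_1$. So the rigid positive isometries are comeager in $\I_1$. We expect the main obstacle to be confirming that the cited density of periodic transformations really holds in the coarse topology as we use it, including periodicity for every point rather than merely a.e. If that identification causes trouble, the fallback is to approximate directly with Rokhlin towers via \cite[Lemma 4]{ChaconFriedman65}: cycle the tower levels and close the cycle with the appropriate Radon--Nikodym weights, which produces a periodic $\tau'$ close to $\tau$ on a fixed finite partition.
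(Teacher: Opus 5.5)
Your proof is correct and follows the paper's route. Both arguments show that the set of $T$ with $T^{n_k}\to I$ strongly is $G_\delta$ in $(\I_1,\sot)$ via a metric for $\sot$. Both get density in $\U_1$ from the density of periodic non-singular transformations in $\GG$, and both transfer the result to $\I_1$ via Proposition \ref{proptypicalposisoisbijSOT}.

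One small remark: the cited density result might only give transformations that are periodic at a.e.\ point with unbounded periods, rather than satisfying $\tau^q=\mathrm{id}$ for a fixed $q$. Rigidity still holds in that case, because $(T_\tau^{(1)})^{n!}f=f$ a.e.\ on the $\tau$-invariant set of points whose period divides $n!$. Hence $\|(T_\tau^{(1)})^{n!}f-f\|_1\le 2\int_{\{n(x)\nmid n!\}}|f|\,dm\to 0$, where $n(x)$ is the period of $x$.
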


\begin{proof}
Using that $\sot$ is metrizable on $\I_1$, we can show as in the proof of Lemma \ref{prop:polyn-Gdelta} that the set of positive isometries $T$ of $L^1[0,1]$ such that $I$ is the strong limit of $T^{n_k}$ for some subsequence $(n_k)_{k \geq 1}$ in $\N$ is $G_\delta$ in $(\I_1, \sot)$. Since non-singular periodic transformations are dense in $\GG$, this set is also $\sot$-dense in $\U_1$, implying the conclusion of Proposition \ref{proprigidL^1sot}.
\end{proof}

We can also derive from Theorem \ref{thm:generic-transf} the following density\footnote{The lack of metrizability of the $\wot$ topology on $\I_1$ prevents us from strengthening the density result into a comeager one.} result.

\begin{proposition} \label{propdensityL1seriesweaklimsemigroup}
    The set of positive invertible isometries $T \in \I_1$ such that the weak limit semigroup contains $\displaystyle \sum_{n = 0}^{N} a_n T^n$ with $N \geq 0$ and $(a_n)_{0 \leq n \leq N} \subseteq [0,1]$ satisfying $\displaystyle \sum_{n = 0}^{N} a_n = 1$ is dense in $(\I_1, \emph{\sot})$ and $(\U_1, \emph{\sot})$.
\end{proposition}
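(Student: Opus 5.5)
The plan is to imitate the density part of the proof of Theorem \ref{typweaklimsemigpconvexposcontractionsWOT}, now in $L^1$, starting from a Koopman operator given by Theorem \ref{thm:generic-transf}.

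First I would fix a weakly mixing $\tau\in\mathcal{T}$ whose weak limit semigroup on $L^2[0,1]$ contains $\frac12(I+T_\tau^k)$ for every $k\geq 0$. Such $\tau$ exists by Proposition \ref{prop:ex-Katok}. By Theorem \ref{thm:generic-transf}, the weak limit semigroup on $L^2[0,1]$ also contains every $\sum_{n=0}^N a_n T_\tau^n$ with $a_n\geq 0$ and $\sum a_n=1$. Taking $a=0$ in that theorem matters here: no projection onto the constants is needed.

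Next I transfer this to $L^1[0,1]$. Fix such a polynomial $Q$ and a sequence $(n_l)$ with $T_\tau^{n_l}\to Q(T_\tau)$ weakly on $L^2$. Since $\tau$ preserves $m$, the operators $T_\tau^{(1)n}f=f\circ\tau^n$ are contractions on $L^1$. Then $\int f\circ\tau^{n_l}\, g\,dm\to \int Q(T_\tau)f\cdot g\,dm$ holds for $f\in L^2$ and $g\in L^\infty$. Because $L^2$ is dense in $L^1$ and the operators are uniformly bounded on $L^1$, this extends to all $f\in L^1$ and $g\in L^\infty$. That is exactly $\wot$ convergence on $L^1$. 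The weak limit semigroup of $T_\tau^{(1)}$ therefore contains all the required convex combinations.

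I then conjugate. Weak mixing gives ergodicity, hence aperiodicity, so Proposition \ref{lemma-conjugacynonsingular} shows that $\{\sigma^{-1}\tau\sigma:\sigma\in\GG\}$ is dense in $\GG$ for the coarse topology. The $p$-coarse topologies coincide for all $p$, and in particular for $p=1$. Thus $\{T_\sigma^{(1)}T_\tau^{(1)}(T_\sigma^{(1)})^{-1}\}$ is $\sot$-dense in $\U_1$. Conjugation by the invertible isometry $T_\sigma^{(1)}$ preserves both $\wot$ convergence and polynomial expressions, so every element of this conjugacy class has the desired property. This gives density in $(\U_1,\sot)$, and density in $(\I_1,\sot)$ then follows from Theorem \ref{ThiwanikisoposbijL1}.

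The main obstacle is the $L^1$ transfer step, in particular making sure the convergence holds on all of $L^1\times L^\infty$ and not only on a dense set. This works because the operators are uniformly bounded in $L^1$. The other point to verify is that conjugation by $T_\sigma^{(1)}$ for nonsingular $\sigma$ preserves the weak limit structure, which is immediate since $T_\sigma^{(1)}$ is an invertible isometry.
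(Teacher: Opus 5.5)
Your proposal is correct and follows the paper's proof essentially step by step. You start from the transformation of Theorem~\ref{thm:generic-transf} and transfer the weak convergence from $L^2$ to $L^1$ using density and uniform boundedness; here you approximate $f\in L^1$ by $L^2$ functions and test against $g\in L^\infty$ directly, where the paper uses simple functions on both sides. You then conclude via the density of the conjugacy class from Proposition~\ref{lemma-conjugacynonsingular}, and your explicit appeal to Iwanik's Theorem~\ref{ThiwanikisoposbijL1} for density in $(\I_1,\sot)$ fills in a step the paper leaves implicit.
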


\begin{proof}
    Let $\tau$ be a weakly mixing measure-preserving transformation on $[0,1]$ whose weak limit semigroup contains all the operators $\displaystyle \sum_{n = 0}^{N} a_n (T_\tau^{(2)})^n$ with $N \geq 0$ and $(a_n)_{0 \leq n \leq N} \subseteq [0,1]$ satisfying $\displaystyle \sum_{n = 0}^{N} a_n = 1$ (given by Theorem \ref{thm:generic-transf}). As in the proof of Theorem \ref{typweaklimsemigpconvexposcontractionsWOT}, using the density of simple functions in $L^1[0,1]$ and in $L^\infty[0,1]$, the weak limit semigroup of $T_\tau^{(1)}$ contains all the operators $\displaystyle \sum_{n = 0}^{N} a_n (T_\tau^{(1)})^n$ with $(a_n)_{0 \leq n \leq N} \subseteq [0,1]$ satisfying $\displaystyle \sum_{n = 0}^{N} a_n = 1$. The conjugacy class of $T_\tau^{(1)}$ in $\U_1$ is dense in $(\U_1, \sot)$ by Proposition \ref{lemma-conjugacynonsingular} and we thus obtain the conclusion of Proposition \ref{propdensityL1seriesweaklimsemigroup}.
\end{proof}

We end this section with the following remark regarding the strong limit semigroup of a typical positive isometry of $L^1[0,1]$ analogous to Remark \ref{rem:isom-strong-limit-sgr}.

\begin{remark}
    The strong limit semigroup of a typical positive isometry $T \in (\I_1, \sot)$ does not contain any of the operators $\frac{1}{2}( I + T^k)$ with $k \geq 1$. Indeed, if such an operator $S$ belongs to the strong limit semigroup of $T \in \I_1$, then $S$ must be an isometry and thus $f \cdot T^k f \geq 0$ for every real function $f \in L^1[0,1]$, by the equality case of Minkowski's inequality in $L^1[0,1]$. Now let us consider $T = T_\tau^{(1)}$ in $\U_1$ with $\tau \in \GG$ aperiodic. Note that such $T$ are typical by Proposition \ref{spectreisometrytypiqueposL1}. By Rokhlin's lemma, there exists a Borel subset $A$ with $m(A) > 0$ such that $A, \tau(A), \dotsc, \tau^k(A)$ are pairwise disjoint and $m([0,1] \setminus \displaystyle \bigcup_{i=0}^{k} \tau^i (A)) < 1/2$. Let $f$ be the function defined by $f = 1$ on $A$, $f= -1$ on $\tau^k (A)$ and $f= 0$ elsewhere. Then $f(x) T^k f(x) < 0$ on $A$, implying that $\frac{1}{2}(I + T^k)$ does not belong to the strong limit semigroup of $T$. 
\end{remark}

\section{Additional results and open problems}

We provide additional results and open questions related to our study.

\subsection{Measure-preserving transformations}\par\noindent

%In the context of transformations, Theorem 6.3 shows that the weak limit semigroup of a generic transformation $T \in \mathcal{T}$ contains all the convex combinations of $\Pconst$ and the non-negative powers of $T$. One could naturally wonder 
The general question of describing all operators belonging to the weak limit semigroup of a generic transformation $T\in\TT$ remains open.
%what other functions of a generic transformation belongs to its weak limit semigroup as for example roots in the weak closure of $\mathcal{T}$. 
Note that the weak closure of $\mathcal{T}$ is precisely the set of all bi-stochastic operators on $L^2[0,1]$ by a result of Vershik, see \cite[Lemma 2.1]{E25}, so elements of the weak limit semigroup are necessarily bi-stochastic (and commute with $T$). Here, an operator $P$ on $L^2[0,1]$ is called \emph{bi-stochastic} if it is positive (in the above sense) and satisfies $P\Eins=P^*\Eins=\Eins$. 
Note further that isometric bi-stochastic operators are precisely (not necessarily invertible) Koopman operators.\footnote{Indeed, if $S$ is such an operator then we can write $Sf = h \cdot f \circ \tau$ with $\tau$ a Borel measurable map from $[0,1]$ onto $[0,1]$ and $h \in L^2[0,1]$ a non-negative function satisfying $\int_{\tau^{-1}(A)} h^2 dm = \int_A dm$ for every Borel subset $A$ of $[0,1]$. Since $S \indic = \indic$ we obtain $h = 1$ and thus $\tau$ is measure-preserving.}

King's Weak Closure Theorem \cite{King86} implies that for a generic transformation $T\in\TT$, the weak closure of the powers of $T$ contains all (not necessarily invertible) Koopman operators commuting with $T$, i.e., all bi-stochastic isometries commuting with $T$. Thus the following question of generic maximality of the weak limit semigroup is natural.

\begin{problem}\label{prob:generic-trans-comm}
    Is it true that for a generic transformation $T \in \mathcal{T}$, all bi-stochastic operators commuting with $T$ belong to $\mathcal{L}_T$ (i.e.,  $\mathcal{L}_T$ is the set of all such operators)?
\end{problem}
Note that by (generic) convexity of $\Tlim$ and King's Weak Closure Theorem as mentioned in the introduction, for example operators of the form $\alpha P_\const + \beta S$ for $\alpha,\beta\in[0,1]$ with $\alpha+\beta = 1$ and $S$ being a (not necessarily invertible) Koopman operator commuting with $T$ belong to $\Tlim$ for a generic $T\in\TT$.

We will show now that $T$ satisfying %the last 
the property from Problem \ref{prob:generic-trans-comm} form a $G_\delta$ set in $\TT$, leaving its density as an open problem.
For this we will need the following easy fact being a consequence of the Banach-Alaoglu theorem, see, e.g., \cite[Page 275, Proposition 5.5]{Conway-book}.

\begin{lemma}\label{lem:weakly-conv-subseq}
Every sequence of contractions on a separable Hilbert space $H$ has a weakly converging subsequence. 
\end{lemma}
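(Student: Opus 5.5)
The plan is to reduce the statement to sequential compactness of the closed unit ball of $\mathcal{L}(H)$ in the weak operator topology, and to get that from Banach--Alaoglu by a diagonal argument. Let $(T_n)_{n\geq 1}$ be contractions on $H$. Since $H$ is separable, I fix a countable dense subset $\{x_j\}_{j\geq1}$ of $H$.

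For each pair $(j,l)$, the scalar sequence $(\la T_n x_j, x_l\ra)_n$ is bounded by $\|x_j\|\|x_l\|$. By Bolzano--Weierstrass and a Cantor diagonal argument over the countably many pairs $(j,l)$, I will extract one subsequence $(T_{n_k})$ along which $\la T_{n_k}x_j,x_l\ra$ converges for every $j,l$.

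Next I extend convergence to all $x,y\in H$ by an $\veps/3$ argument. Approximate $x$ by some $x_j$ and $y$ by some $x_l$; the uniform bound $\|T_{n_k}\|\leq 1$ controls the errors, since
$$
|\la T_{n_k}x,y\ra-\la T_{n_k}x_j,x_l\ra|\leq \|x-x_j\|\|y\|+\|x_j\|\|y-x_l\|.
$$
Hence $(\la T_{n_k}x,y\ra)_k$ is Cauchy for every $x,y$. Define $b(x,y):=\limk\la T_{n_k}x,y\ra$. This $b$ is sesquilinear and satisfies $|b(x,y)|\leq\|x\|\|y\|$. By the Riesz representation theorem there is a unique $T$ with $\|T\|\leq1$ and $b(x,y)=\la Tx,y\ra$. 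Therefore $T_{n_k}\to T$ weakly.

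Alternatively, the same conclusion follows from the fact that the closed unit ball of $\mathcal{L}(H)$ is $\wot$-compact, by Banach--Alaoglu applied to $\mathcal{L}(H)$ viewed as the dual of the trace class, as in \cite{Conway-book}. Combined with the metrizability of $\wot$ on $\mathcal{C}$ via the metric $d$ from Subsection \ref{introsubsecoperators}, compactness gives sequential compactness. No step is a genuine obstacle; the only point needing care is the density/uniform-boundedness extension, which is exactly where contractivity is used.
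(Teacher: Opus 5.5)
Your proof is correct, but your main argument differs from the paper's. The paper gives no separate proof. It invokes Banach--Alaoglu in the form of \wot-compactness of the closed unit ball of $\mathcal{L}(H)$ \cite[Page 275, Proposition 5.5]{Conway-book}. It then uses (implicitly) that \wot\ is metrizable on $\mathcal{C}$ for separable $H$, via the metric $d$ of Subsection \ref{introsubsecoperators}, so compactness gives sequential compactness. That is exactly the alternative you sketch in your last paragraph.

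Your primary route proves sequential compactness directly. You extract a Cantor diagonal subsequence on the countably many coefficients $\la T_n x_j,x_l\ra$, and extend convergence to all $x,y$ using the uniform bound $\|T_n\|\leq 1$. The limit operator then comes from Riesz representation of the bounded sesquilinear limit form. This is self-contained and elementary: it needs neither Tychonoff, nor the identification of $\mathcal{L}(H)$ with the dual of the trace class, nor a separate metrizability step. It also shows clearly where separability (the countable diagonal) and boundedness (the $\veps/3$ extension) are used.

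The compactness route gives more: the unit ball is \wot-compact for arbitrary $H$, and separability is needed only at the end, to pass from compactness to sequential compactness.

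A small precision on your alternative: Banach--Alaoglu for the trace-class predual gives compactness of the ball in the $\sigma$-weak topology. Since \wot\ is coarser, \wot-compactness follows at once.
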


\begin{proof}[Proof of $G_\delta$ property for Problem \ref{prob:generic-trans-comm}]
Denote the set of all bi-stochastic operators on $L^2[0,1]$ by $\mathcal{BS}$. Observe that $T\in\mathcal{T}$ is as desired if and only if it belongs to 
$$
\bigcap_{k,N\in\N} %\bigcup_{n\geq N} 
\left\{T\in \mathcal{T}:\, \forall   S\in\mathcal{\mathcal{BS}} \text{ commuting with $T$},\ \exists \ n\geq N:\,   %\sum_{j=1}^\infty\frac{\|T^nf_j-Sf_j\|}{2^{j}\|f_j\|}
d(T^n,S)<\frac1k\right\},
$$
where $d$ denotes the metric inducing the weak topology on $\mathcal{C}$. 
We show that each set $M_{k,N}$ under the intersection sign is open in $\TT$. Let $(T_m)\subset \mathcal{T}\setminus M_{k,N}$ with $\displaystyle \lim_{m\to\infty}T_m=T\in\mathcal{T}$ strongly$^*$ and denote by $S_m$ the corresponding bi-stochastic operators commuting with $T_m$ with $d(T_m^n,S_m)\geq \frac1k$
%$$
%\sum_{j=1}^\infty\frac{\|T_m^nf_j-S_mf_j\|}{2^{j}\|f_j\|}\geq \frac1k.
%$$  
for all $n\geq N$. By Lemma \ref{lem:weakly-conv-subseq} we can assume without loss of generality that $\displaystyle \lim_{m\to\infty} S_m=S$ weakly for some contraction $S$ (being clearly bi-stochastic, too) which commutes with $T$ (indeed, it is easy to see that $T_m S_m$ converges weakly to $TS$ and $S_m T_m$ converges weakly to $S T$).
Moreover,  we clearly have $d(T^n,S)\geq \frac1k$ for all $n\geq N$, showing $T\notin M_{k,N}$. This concludes the proof of the $G_\delta$ property for Problem \ref{prob:generic-trans-comm}. 
\end{proof}

%Note that by \cite[Lemma 2.1]{E25}, the weak closure of $\mathcal{T}$ coincides with the set of bi-stochastic operators on $L^2[0,1]$.

\subsection{Hilbert space operators}\par\noindent

For typical Hilbert space contractions, we have seen that the weak limit semigroup contains many operators such as infinite series. It is interesting and natural to ask which other functions of $T$ are there, such as for example the roots.
%Moreover, we have seen in Section \ref{sec:ex-op} examples of operators whose commutant is contained in the weak closure of their powers. Motivated by King's Rank-One Theorem, the following problem is open for contractions.
%
%\begin{problem}
%    Is it true that the commutant of a rigid operator of $\U$ is either trivial (i.e only contains the positive powers and the identity) or uncountable?
%\end{problem}
%Also motivated by King's Theorem, the following problem in the operator setting is open.

\begin{problem}
    Is it true that for a typical unitary operator $T$, all unitary operators $S$ satisfying $S^2 = T$ belong to $\mathcal{L}_T$?
\end{problem}

More generally, one can ask whether the maximality of the weak limit semigroup we saw in Example \ref{prop:ex-Sophie} is typical.
\begin{problem}
    Is it true that for a typical unitary operator $T$, all contractions commuting with $T$ belong to $\mathcal{L}_T$?
\end{problem}

As for Problem \ref{prob:generic-trans-comm}, the $G_\delta$ property easily holds and it is the density which is open.

As mentioned in the introduction, by King's Weak Closure Theorem the answer to both problems in the context of transformations is positive, where both the roots and the commuting contractions are assumed to be Koopman operators, too.

\subsection{Positive operators on $\ell_p$}\par\noindent

The second author previously studied typical properties of positive contractions on $\ell_p$-spaces with the aim of studying the Invariant Subspace Problem for such operators. More precisely, let $(e_n)_{n \geq 0}$ be the canonical basis of $\ell_p$ with $1 \leq p < \infty$. A vector $x = \displaystyle \sum_{n \geq 0} x_n e_n$ is said to be positive (written $x \geq 0$) if $x_n \geq 0$ for every $n \geq 0$, and an operator $T$ on $\ell_p$ is said to be positive if $T x \geq 0$ for every $x \geq 0$. He proved in \cite{gillet2} that for every $1 < p \leq 2$, the topologies $\wot$ and $\sot$ have the same comeager sets in the space of positive contractions on $\ell_p$. Since an \sot-typical positive contraction of $\ell_p$ is strongly stable by \cite[Prop. 3.1]{gillet1}, we deduce (by duality) that for every $1 < p < \infty$, a \wot-typical positive contraction on $\ell_p$ is weakly stable, implying that its weak limit semigroup is reduced to $\{0 \}$\footnote{The same remark holds for a $\wot$-typical contraction of $\ell_p$ with $1 < p \ne 2 < \infty$ by the works of Grivaux, Matheron and Menet \cite{GrivauxMatheronMenet21b} and \cite{GrivauxMatheronMenet22}.}. This explains why the study of the weak limit semigroup of $\wot$-typical positive contractions on $\ell^p$ is of limited interest.

\subsection{Positive operators on $L^p[0,1]$}\par\noindent

Section \ref{Section-genericposoperators} studies typical properties of positive contractions and isometries of $L^p[0,1]$ related to the weak limit semigroup, mainly for the $\wot$ and $\sot$ topologies. Thanks to Theorem \ref{thmtypicalWOTbijisometries}, we can show that the orbit of a positive contraction is meager in $(\Cp, \wot)$ for every $1 < p < \infty$. This was observed in \cite[Prop. 3.3]{EisnerMatrai13} in the class of Hilbert space contractions, whereas here it applies to every $L^p[0,1]$ with $1 < p < \infty$.

\begin{proposition} \label{thmmeagerorbitsposcontractions}
    Let $1 < p < \infty$. For every $T \in \Cp$, the orbit $\mathcal{O}_p(T) := \{ S T S^{-1} : S \in \U_p \}$ is meager in $(\Cp, \emph{\wot})$. In particular, $\emph{\wot}$-typical positive contractions of $L^p[0,1]$ are not conjugate in $\U_p$.
\end{proposition}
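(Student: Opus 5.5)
The plan is to reduce to positive invertible isometries via Theorem \ref{thmtypicalWOTbijisometries} and then use a conjugacy-invariant property that separates typical operators.

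\textbf{Reduction to $\U_p$.} By Theorem \ref{thmtypicalWOTbijisometries}, $\U_p$ is dense $G_\delta$ in $(\Cp,\wot)$. The map $T\mapsto STS^{-1}$ preserves $\U_p$ for $S\in\U_p$. If $T\notin\U_p$, then $\mathcal{O}_p(T)\subset\Cp\setminus\U_p$, which is meager, and we are done. So we may assume $T\in\U_p$, i.e.\ $T=T_\tau^{(p)}$ with $\tau\in\GG$, and $\mathcal{O}_p(T)\subset\U_p$. Since the $\wot$ and $\sot$ topologies coincide on $\U_p$ and $\U_p$ is comeager, a subset of $\U_p$ meager in $(\U_p,\sot)$ is meager in $(\Cp,\wot)$. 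It therefore suffices to show that $\mathcal{O}_p(T)$ is meager in $(\U_p,\sot)$.

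\textbf{Meagerness in $\U_p$.} The orbit $\mathcal{O}_p(T)$ is the continuous image of the Polish group $\U_p$, hence analytic and so has the Baire property. It is invariant under conjugation by $\U_p$, which acts by homeomorphisms. Conjugacy classes are dense, by Proposition \ref{lemma-conjugacynonsingular} for aperiodic elements, and aperiodic elements are comeager. The action therefore has a dense orbit, and by the topological $0$--$1$ law each invariant set with the Baire property is meager or comeager. So it suffices to rule out that a single orbit is comeager. I would do this with an invariant that varies on a comeager set.

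\textbf{Separating invariant.} I would use weak limit semigroups, which are conjugation-equivariant: $\mathcal{L}_{STS^{-1}}=S\mathcal{L}_TS^{-1}$. Typically $T$ is rigid by Proposition \ref{proptypposcontractionwotrigid} and $0$-rigid by Theorem \ref{theoremevaleursproprestypicalposcontrWOT}. A cleaner invariant would be one that is not constant on any comeager set, such as the scalar $\lambda$ for which $\lambda I\in\mathcal{L}_T$ with a prescribed return structure. The set of $T$ for which $\lambda I\in \mathcal{L}_T$ along a specific sequence might be $G_\delta$-dense for some $\lambda$ and not for others.

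\textbf{Main obstacle and alternative.} Finding a genuinely nonconstant invariant is the main obstacle. As a fallback I would adapt the Hilbert-space argument of \cite[Prop.~3.3]{EisnerMatrai13} in the following way. Suppose some orbit $\mathcal{O}$ were comeager in $\U_p$. Then for generic pairs $(T_1,T_2)$ the product $T_1\oplus T_2$, transported to $L^p[0,1]$ via a positive isometric identification $L^p[0,1]\cong L^p[0,\frac12]\oplus L^p[\frac12,1]$, is also generic. By Kuratowski--Ulam, a generic pair would then lie in $\mathcal{O}\times\mathcal{O}$, so $T\oplus T$ would be conjugate to $T$. This alone does not yield a contradiction. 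Instead I would exploit the following asymmetry: the set $\{T: T\text{ conjugate to }T\oplus T\}$ contradicts the fact that typical $T$ have no eigenvalues while having $\tau$ ergodic. The direct sum $T\oplus T$ is not ergodic, since the invariant set $[0,\frac12]$ gives a nontrivial invariant ideal. Ergodicity of $\tau$ is a conjugacy invariant and holds typically (Choksi--Kakutani), so $T\oplus T$ is never typical, and this yields the desired contradiction.
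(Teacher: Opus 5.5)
Your reductions are sound. Meagerness in $(\U_p,\sot)$ suffices, which is also the paper's first step, and the topological $0$--$1$ law reduces everything to showing that no single orbit is comeager. But that last statement is the whole content of the proposition, and your proposal never proves it. The properties you list (rigidity, $0$-rigidity) are themselves typical, hence constant on a comeager set, so they cannot separate orbits. The ``prescribed return structure'' invariant is never specified.

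The fallback argument is invalid. The claim that $T_1\oplus T_2$ is generic for a generic pair $(T_1,T_2)$ is false. Every such direct sum leaves $L^p[0,\frac12]$ invariant, so it comes from a non-ergodic transformation, and non-ergodic elements form a meager set. This is exactly the fact you invoke in your last sentence. Consequently Kuratowski--Ulam tells you nothing about $T\oplus T$, and the ``contradiction'' you reach only refutes your own intermediate claim. The hypothesis that $\mathcal{O}$ is comeager is never used in an essential way.

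The missing idea is a conjugacy invariant that genuinely varies; the paper uses the maximal spectral type. For $p=2$, elements of $\U_2$ are unitary and conjugation by $S\in\U_2$ is a unitary equivalence, so $\mu_{STS^{-1}}\sim\mu_T$. For any probability measure $\nu$ on $\T$, the set $\{S\in\U_2:\mu_S\perp\nu\}$ is $G_\delta$, by continuity of $S\mapsto\mu_S$. It is also dense: it is conjugation-invariant, so by Proposition \ref{lemma-conjugacynonsingular} one aperiodic element suffices.
\begin{itemize}
\item If $T$ has no eigenvalues, $\mu_T$ is continuous, and an irrational rotation, whose spectral type is discrete, does the job.
\item If $T$ has eigenvalues, its orbit already lies in the meager complement of the set from Theorem \ref{theoremevaleursproprestypicalposcontrWOT}.
\end{itemize}
Taking $\nu=\mu_T$, the orbit misses a dense $G_\delta$ set.

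Since there is no spectral measure for $p\neq2$, the paper transports this via the group isomorphism $T_\sigma^{(2)}\mapsto T_\sigma^{(p)}$. This map is a homeomorphism $(\U_2,\sot)\to(\U_p,\sot)$ because the $p$-coarse topologies coincide on $\GG$, and it maps $\mathcal{O}_2(T_\tau^{(2)})$ onto $\mathcal{O}_p(T_\tau^{(p)})$. Finally, the ``in particular'' part needs Kuratowski--Ulam applied to the conjugacy relation, whose sections are meager; you did not address it.
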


\begin{proof}
    It is enough to prove that the orbit is meager in $(\U_p, \sot)$. 
    
    Let us suppose first that $p = 2$. Let $\tau \in \GG$ and let $T = T_\tau^{(2)}$. In this case, the proof is analogous to \cite[Prop. 3.3]{EisnerMatrai13} and goes as follows. Let us denote by $\mu_{T}$ the maximal spectral type of $T$, which is a probability measure on $\T$. For every Borel probability measure $\nu$ on $\T$, the set $\{ S \in \U_2 : \mu_S \perp \nu \}$ is dense $G_\delta$ in $\U_2$, as observed in \cite[Thm. 8.30(b)]{Nadkarni-book}. Indeed, this set is easily seen to be $G_\delta$ in $\U_2$ since it is given by
    $$
    \displaystyle \bigcap_{m \geq 1} \{ S \in \U_2 : \exists \; U \; \textrm{open in $\T$}, \; \mu_S(U) < 1/m \; \textrm{and} \; \nu(U) > 1-1/m  \}
    $$ and the map $S \mapsto \mu_S$ is continuous from $\U_2$ into the space of probability measures on $\T$ (see \cite[Thm. 7.9 and Paragraph 8.22]{Nadkarni-book} for more details). Moreover, this last set is dense in $\U_2$. Indeed, this set is conjugacy invariant and we can always find an irrational rotation on $[0,1]$ whose maximal spectral type is singular with respect to $\nu$, showing the density in $\U_2$ by Proposition \ref{lemma-conjugacynonsingular}. It follows that the orbit $\mathcal{O}_2(T)$ is meager in $(\U_2, \sot)$. 

    The proof for any $1 < p < \infty$ follows from the case where $p = 2$. Indeed, for $1 \leq p_1 \leq p_2 < \infty$, the map $i_{p_1, p_2} : T_\sigma^{(p_1)} \mapsto T_\sigma^{(p_2)}$ from $(\U_{p_1}, \sot)$ onto $(\U_{p_2}, \sot)$ is a homeomorphic group isomorphism since the $p$-coarse topologies coincide on $\GG$. Since for every $\tau \in \GG$ the orbit $\mathcal{O}_2(T_\tau^{(2)})$ is meager in $(\U_2, \sot)$, it follows that the orbit $\mathcal{O}_{p}(T_\tau^{(p)}) = i_{2,p}(\mathcal{O}_{2}(T_\tau^{(2)}))$ is also meager in $(\U_p, \sot)$. 
    The second part of Proposition \ref{thmmeagerorbitsposcontractions} follows exactly as in \cite[Prop. 3.3]{EisnerMatrai13}. This
    concludes the proof of Proposition \ref{thmmeagerorbitsposcontractions}.
\end{proof}

Using that the map $i_{1,p} : T_\tau^{(1)} \mapsto T_\tau^{(p)}$ from $(\U_1, \sot)$ onto $(\U_p, \sot)$ is a homeomorphic group isomorphism for every $1 < p < \infty$, we obtain from Propositions \ref{proptypicalposisoisbijSOT} and \ref{thmmeagerorbitsposcontractions} the following analogue in $\I_1$.

\begin{proposition}
    For every $T \in \I_1$, the orbit $\mathcal{O}_1(T) := \{ S T S^{-1} : S \in \U_1 \}$ is meager in $(\I_1, \emph{\sot})$.
\end{proposition}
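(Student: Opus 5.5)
The plan is to transfer the question from $\I_1$ to $\U_1$, then from $\U_1$ to $\U_p$ via the identification of the coarse topologies on $\GG$, and to use that meagerness is preserved under homeomorphisms and under passage to comeager subspaces.

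\textbf{Step 1: reduction to $\U_1$.} Fix $T\in\I_1$. If $T\notin\U_1$, then $\mathcal{O}_1(T)\cap\U_1=\emptyset$, since a conjugate $STS^{-1}$ with $S\in\U_1$ is invertible if and only if $T$ is. Hence $\mathcal{O}_1(T)\subset\I_1\setminus\U_1$. This set is meager in $(\I_1,\sot)$ because $\U_1$ is dense $G_\delta$ there by Proposition \ref{proptypicalposisoisbijSOT}.

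So assume $T\in\U_1$. Since $\U_1$ is a dense $G_\delta$ subset of the Polish space $(\I_1,\sot)$, a subset of $\U_1$ is meager in $\U_1$ if and only if it is meager in $\I_1$. (A nowhere dense subset of a dense subspace is nowhere dense in the whole space, and $\I_1\setminus\U_1$ is meager anyway.) It therefore suffices to show that $\mathcal{O}_1(T)$ is meager in $(\U_1,\sot)$.

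\textbf{Step 2: transfer to $\U_p$.} Write $T=T_\tau^{(1)}$ with $\tau\in\GG$, and fix some $1<p<\infty$. The map $i_{1,p}:T_\sigma^{(1)}\mapsto T_\sigma^{(p)}$ is a homeomorphism from $(\U_1,\sot)$ onto $(\U_p,\sot)$, because the $p$-coarse topologies coincide on $\GG$. It is also a group isomorphism, since $\sigma\mapsto T_\sigma^{(q)}$ is an anti-homomorphism for every $q$ in the same way. Consequently
\[
i_{1,p}(STS^{-1})=i_{1,p}(S)\,T_\tau^{(p)}\,i_{1,p}(S)^{-1},
\]
and $i_{1,p}$ maps $\mathcal{O}_1(T)$ exactly onto $\mathcal{O}_p(T_\tau^{(p)})$.

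By Proposition \ref{thmmeagerorbitsposcontractions}, $\mathcal{O}_p(T_\tau^{(p)})$ is meager in $(\Cp,\wot)$. Its proof in fact shows directly that this orbit is meager in $(\U_p,\sot)$. Alternatively, this follows from $\U_p$ being a dense $G_\delta$ subset of $(\Cp,\wot)$ on which $\wot$ and $\sot$ coincide (Theorem \ref{thmtypicalWOTbijisometries}). Pulling back by the homeomorphism $i_{1,p}$ gives that $\mathcal{O}_1(T)$ is meager in $(\U_1,\sot)$, which completes the argument.

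\textbf{Main obstacle.} The delicate point is the bookkeeping of meagerness across subspaces. One must use that $\U_1$ is comeager, and not merely dense, in $\I_1$, and that $\U_p$ is a dense $G_\delta$ subset of $\Cp$. Each of these is supplied by the cited results. The only other thing to check is that $i_{1,p}$ respects conjugation, which holds because it is induced by the same $\sigma\in\GG$ on both sides.
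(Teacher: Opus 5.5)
Your proof is correct and follows the paper's route. You reduce to $\U_1$ via Proposition \ref{proptypicalposisoisbijSOT}, transport the orbit through the homeomorphic group isomorphism $i_{1,p}$, and invoke Proposition \ref{thmmeagerorbitsposcontractions} in its $(\U_p,\sot)$ form. You also spell out the case $T\notin\U_1$, which the paper leaves implicit and which is exactly where comeagerness, not mere density, of $\U_1$ in $\I_1$ is used.
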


Beyond the study of the weak limit semigroup of typical positive contractions of $L^p[0,1]$ and motivated in part by the study of typical properties of contractions and positive contractions of $\ell_p$-spaces, as well as by the Invariant Subspace Problem for positive operators of $L^p[0,1]$, one may also investigate typical properties of positive contractions of $L^p[0,1]$ for other operator topologies.

\begin{problem}
    Study typical properties of positive contractions of $L^p[0,1]$ for the $\emph{\sot}$ and $\emph{\sote}$ topologies. 
\end{problem}

%1) 
%Other operators than polynomials? What about roots? Commuting operators (for $\mathcal{T}$ known for elements of $\mathcal{T}$, operators not)? 
%What is the weak closure of $\mathcal{T}$? Probably all not necessarily invertible transformations, but what else? For example the combinations of projection onto constants with $\lambda I$, $\lambda \in [0,1]$ by Stepin

%2) Is it true that for a rigid operator (in $\mathcal{I}$?), the commutant (in $\mathcal{I}$?) is either trivial (the powers + identity) or uncountable? (For transformations mentioned in King commutant rank one 1986, pages 364 below - 365 above, but in the mentioned reference [7] I could not find it)

%2) % Is it true that the set of embedding semigroups has typically cardinality continuum and every two elements of such are not isomorphic? (For transformations done in Stepin, Eremenko. Moreover, they prove that the centralizer of a generic transformation contains a subgroup isomorphic to an infinite-dimensional torus.)
%4) 
%Flows instead of transformations, $C_0$-semigroups instead of operators - what are weak limits?

%\section{Appendix: More on typicality}\label{sec:appendix}

\end{document}